\newtheorem{thm}{Theorem}[section]
\newtheorem{cor}[thm]{Corollary}
\newtheorem{lem}[thm]{Lemma}
\newtheorem{prop}[thm]{Proposition}
\newtheorem{ques}[thm]{Question}
\theoremstyle{definition}
\newtheorem{ddef}[thm]{Definition}
\newtheorem{ex}[thm]{Example}
\newtheorem{exs}[thm]{Examples}
\newtheorem{rmk}[thm]{Remark}
\theoremstyle{remark}
\newtheorem*{claims}{Claims}
\newcommand{\st}{\ensuremath{\, | \,}}
\newcommand{\Sgen}{S=\langle a_1,...,a_\nu \rangle}
\newcommand{\Rgen}{R=k[[x^{a_1},\dots,x^{a_\nu}]]}
\newcommand{\wt}{\widetilde}
\newcommand{\wh}{\widehat}
\newcommand{\pre}{\preceq}
\newcommand{\floor}[2][a_1]{\left\lfloor\frac{#2}{#1}\right\rfloor}
\newcommand{\ceil}[2][a_1]{\left\lceil\frac{#2}{#1}\right\rceil}
\newcommand{\sfloor}[2][a_1]{\big\lfloor\frac{#2}{#1}\big\rfloor}
\newcommand{\sceil}[2][a_1]{\big\lceil\frac{#2}{#1}\big\rceil}
 \DeclareMathOperator{\Z}{\mathbb Z} \DeclareMathOperator{\N}{\mathbb N}
  \DeclareMathOperator{\A}{\mathcal A}
 \DeclareMathOperator{\I}{\mathcal I} 
\DeclareMathOperator{\gr}{\mathrm gr}
\def\sqr#1#2{{\vcenter{\hrule height.#2pt
        \hbox{\vrule width.#2pt height#1pt \kern#1pt
                \vrule width.#2pt}
        \hrule height.#2pt}}}
\def\ds{\displaystyle}
    \DeclareMathOperator{\ord}{ord }
 \DeclareMathOperator{\Ap}{Ap} 
\DeclareMathOperator{\maxap}{maxAp} \DeclareMathOperator{\minap}{minAp}
\begin{document}
\title[Goto Numbers of a Numerical Semigroup Ring]{Goto Numbers of a Numerical Semigroup Ring and the Gorensteiness of Associated Graded Rings}
\author{Lance Bryant}
\date{\today}

\begin{abstract} The Goto number of a parameter ideal $Q$ in a Noetherian local ring $(R,m)$ is the largest integer $q$ such that $Q:m^q$ is integral over $Q$. The Goto numbers of the monomial parameter ideals of $R=k[[x^{a_1},x^{a_2},\dots,x^{a_\nu}]]$ are characterized using the semigroup of $R$. This helps in computing them for classes of numerical semigroup rings, as well as on a case-by-case basis. The minimal Goto number of $R$ and its connection to other invariants is explored. Necessary and sufficient conditions for the associated graded rings of $R$ and $R/x^{a_1}R$ to be Gorenstein are also given, again using the semigroup of $R$.
\end{abstract}

\maketitle
\section{Introduction}

Quasi-socle ideals are ideals of the form $I=Q:m^q$ in a Noetherian local ring $(R,m)$ with dimension $d>0$, where $Q=(a_1,a_2,\dots,a_d)$ is a parameter ideal. In response to a conjecture of \citet{PU} rooted in linkage theory, \citet{W} proved the following:  Let $R$ be Cohen-Macaulay with $d\ge 2$, $d>2$ when $R$ is regular. If $Q\subset m^q$ and $I\ne R$, then $I\subset m^q$, $m^qI = m^qQ$, and $I^2=QI$. Recently \citet{GKM} have inquired about a modification of this result for the one-dimensional case. They began by studying Gorenstein numerical semigroup rings.

One component involved in making this modification is determining when $Q:m^q$ is integral over $Q$. \citet{HS} made the following definition.

\begin{ddef}\label{goto def} Let $g(Q)$ be the largest integer $q$ such that $Q:m^q$ is integral over $Q$. This is called the {\em Goto number of Q}. 
\end{ddef}

\noindent In this paper the Goto numbers of the parameter ideals in a numerical semigroup ring $(R,m)=k[[S]]=k[[x^{a_1},x^{a_2},\dots,x^{a_\nu}]]$ are considered. Of particular interest are the monomial parameter ideals.

Let $e$ be the multiplicity of $R$. In section 2, Corollary \ref{cmain} states that there is an $e$-tuple of integers $(\sigma(1),\sigma(2),\dots,\sigma(e))$ called the Goto vector of $S$ such that $\{g(u) \st 0\ne u\in S\} \subset \{\sigma(1),\sigma(2),\dots,\sigma(e)\}$ where $g(u)$ is the Goto number of $x^uR$. In addition, $\min\{g(Q) \st Q$ is a parameter ideal of $R\} = \min\{\sigma(1),\sigma(2),\dots,\sigma(e)\}$. If the Goto vector of $S$ is known, then the computation of $g(u)$ is reduced to determining the set $\{\alpha \in \{1,2,\dots,e\} \st u-\alpha\in S\}$. Using this set, several results about $g(u)$ are obtained. Bounds for the Goto numbers of $R$ are given in Proposition \ref{bounds}.

Section 3 introduces a class of numerical semigroups called $M$-pure semigroups. For $(R,m)=k[[S]]$, the $M$-purity of $S$ is closely connected to the Gorenstein property of the associated graded rings $\gr_m(R)=\bigoplus_{i\ge0} m^i/m^{i+1}$ and $\gr_{\bar m}(\bar R)=\bigoplus_{i\ge0} \bar{m}^i/\bar{m}^{i+1}$ where $\bar R=R/x^{a_1}R$ and $\bar m = m/x^{a_1}R$. The definition of $M$-purity involves the Ap\'ery set of the semigroup $S$ and the $m$-adic order of $R$. The following equivalencies are proven in Theorem \ref{equivs}: (i) $\gr_{\bar m}(\bar R)$ is Gorenstein if and only if $S$ is $M$-pure symmetric, (ii) $\gr_{m}(R)$ is Gorenstein if and only if $S$ is $M$-pure symmetric and $g(a_1)=r$, where $g(a_1)$ is the Goto number of $x^{a_1}R$ and $r$ is the reduction number of $m$ (with respect to $x^{a_1}R$). This section is ended by showing that in general there are no implications among the conditions symmetry, $M$-purity, and $M$-additivity. A semigroup $S$ is $M$-additive if and only if $\gr_m(R)$ is Cohen-Macaulay.

Section 4 is largely concerned with the inequalities $\delta \le \gamma \le \ord(C) \le \tau \le g(a_1) \le r$ where $\ord(C)$ is the $m$-adic order of the conductor $C=R:k[[x]]$ and $\tau = \min\{g(Q)\st Q$ is a parameter ideal of $R\}$. $\delta$ and $\gamma$ are invariants that are derived from the Ap\'ery set of $S$. Sufficient conditions are provided for many of these inequalities to be equalities. A guiding question for this work has been: when is $\tau = g(a_1)$? Some partial results are the following equivalencies stated in Theorem \ref{puredel}: (i) $S$ is $M$-pure if and only $\delta = g(a_1)$, (ii) $S$ is $M$-pure and $\gr_m(R)$ is Cohen-Macaulay if and only if $\delta = r$, and (iii) $\gr_m(R)$ is Gorenstein if and only if $S$ is symmetric and $\delta = r$. Furthermore $\tau = g(a_1)$ for every semigroup with multiplicity less than 5 with the exception of $S=<4,5,7>$, and for every symmetric semigroup with multiplicity less than 7 with the exceptions $S=<5,6,9>$ and $S=<6,7,10,11>$.

In the last section Goto numbers are expressed in terms of the minimal generators of the semigroup. This enables one to consider a class of semigroups (e.g., those with embedding dimension 2) instead of considering semigroups on a case-by-case basis. The contents of Theorems \ref{arith}, \ref{sym almost}, \ref{maxbed}, \ref{ele4}, and \ref{ee5} are the computations for the following classes: (i) $S$ is symmetric and generated by an arithmetic sequence, (ii) $S$ is symmetric and of almost maximal embedding dimension, (iii) $S$ is of maximal embedding dimension, (iv) $S$ has multiplicity less than 5, (v) $S$ is symmetric with multiplicity equal to 5. Moreover, in Theorem \ref{gu5} the Goto numbers of an arbitrary semigroup $S$ are expressed in terms of the Ap\'ery set of $S$.


\section{The Goto Numbers of a Numerical Semigroup}

A {\em numerical semigroup}, or {\em semigroup}, $S$ is a subsemigroup of $\N_0$ that contains 0 and has a finite complement in $\N_0$. For two elements $u$ and $u'$ in $S$, $u \pre u'$ if there exists an $s\in S$ such that $u+s = u'$. This defines a partial ordering on $S$. The minimal elements in $S\setminus\{0\}$ with respect to this ordering form the unique minimal set of generators for $S$, which is denoted by $\{a_1,a_2,\dots, a_\nu\}$ where $a_1<a_2<\dots<a_\nu$. $S=\{\sum_{i=1}^{\nu} c_ia_i \st c_i\ge 0\}$ is represented using the notation $\Sgen$. Since the minimal generators of $S$ are distinct modulo $a_1$, the set of minimal generators is finite. Furthermore, having finite complement in $\N_0$ is equivalent to $\gcd\{a_i \st 1\le i\le \nu\} = 1$.

$S$ has the following invariants:
\medskip

\begin{itemize}
\item $f=f(S) = \max\{z\in \Z\setminus S\}$ is the {\em Frobenius number} of $S$
\item $e = e(S) = \min\{u\in S\st u\ne 0\}$ is the {\em multiplicity} of $S$
\item $\nu = \nu(S) =\#\{a_1,\dots,a_\nu\}$ is the {\em embedding dimension} of $S$
\end{itemize}
\medskip

Notice that we always have $e=a_1$ and $\nu\le e$.

The {\em Ap\'ery set of $S$ with respect to $n\in \N$} is $\Ap(S;n)=\{w \in S \st w-n\not\in S\}$. This is a finite set and it is immediate from the definition that if $w\in \Ap(S;n)$ and $s\pre w$ for some $s\in S$, then $s\in \Ap(S;n)$. For $u\in S$, $\#\Ap(S;u) = u$ and no two elements of $\Ap(S;u)$ are congruent modulo $u$. $\Ap(S) = \Ap(S;e)$ and is usually called {\em the} Ap\'ery set of $S$. There are two natural ways to order the elements of $\Ap(S)$. One is $\Ap(S)=\{w_0,\dots,w_{e-1}\}$ where $w_0 < w_1 < \cdots < w_{e-1}$, and the other is $\Ap(S)=\{v_0,\dots,v_{e-1}\}$ where $v_n \equiv n \mod e$. Notice that we always have $w_0=v_0=0$.

We will allow the subscript of $v_n$ to be any integer by agreeing that $v_{n} = v_{m}$ if and only if $n \equiv m \mod e$. This will allow us to perform arithmetic operations on the subscripts of the $v_i$'s as in Lemma \ref{apnot}. Recall that the floor function $\left\lfloor x \right\rfloor$ denotes the greatest integer less than or equal to $x$, and that the ceiling function $\left\lceil x \right\rceil$ denotes the least integer greater than or equal to $x$.

\begin{lem}\label{apnot} Let $\Sgen$ be a semigroup with $\Ap(S) = \{v_0,\dots,v_{e-1}\}$. Then
\begin{enumerate}
\item $v_n+v_m = v_{n+m} + ta_1$ where $t\ge 0$.
\item $v_n-v_m = v_{n-m} - ta_1$ where $t\ge 0$.
\item $v_n + v_{-n} = (1+\sfloor{v_n}+\sfloor{v_{-n}})a_1$ if $v_n \ne 0$.
\end{enumerate}
\end{lem}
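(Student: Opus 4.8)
The plan is to first establish the governing fact that, for every integer $k$, the Apéry element $v_k$ is the \emph{least} element of $S$ lying in the residue class of $k$ modulo $e=a_1$. Indeed, if $w$ is the smallest element of $S$ with $w\equiv k \pmod{e}$, then $w-e$ lies in the same class and is smaller, so minimality forces $w-e\notin S$, i.e. $w\in\Ap(S)$; since $\Ap(S)$ contains exactly one element in each residue class modulo $e$, this element must be $v_k$. With this characterization in hand, parts (1) and (2) become nearly immediate, and the remaining work is concentrated in the carry computation for part (3).

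For part (1), I would observe that $v_n+v_m\in S$ and $v_n+v_m\equiv n+m \pmod{e}$. By the governing fact, $v_{n+m}$ is the least element of $S$ in that class, so $v_n+v_m\ge v_{n+m}$, and the difference $v_n+v_m-v_{n+m}$ is simultaneously nonnegative and divisible by $e=a_1$. Writing it as $ta_1$ with $t\ge 0$ yields the claim. Part (2) then follows formally from part (1) by the substitution $n\mapsto n-m$: applying (1) to the pair $(n-m,\,m)$ gives $v_{n-m}+v_m=v_n+ta_1$ with $t\ge 0$, and rearranging produces $v_n-v_m=v_{n-m}-ta_1$.

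For part (3), the approach is to run the division algorithm and track the remainders. Since $v_n\ne 0$ forces $n\not\equiv 0\pmod{a_1}$ (because $v_0=0$ is the unique Apéry element in the zero class), I would write $v_n=q_1a_1+r_1$ and $v_{-n}=q_2a_1+r_2$ with $q_1=\lfloor v_n/a_1\rfloor$, $q_2=\lfloor v_{-n}/a_1\rfloor$, and $0\le r_1,r_2<a_1$. Because $v_n\equiv n$ and $v_{-n}\equiv -n$ modulo $a_1$, neither remainder vanishes, and $r_1+r_2\equiv 0\pmod{a_1}$ together with $0<r_1+r_2<2a_1$ forces $r_1+r_2=a_1$ exactly. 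Hence $v_n+v_{-n}=(q_1+q_2)a_1+(r_1+r_2)=(q_1+q_2+1)a_1$, which is the asserted formula.

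The one subtle point—really the crux of part (3)—is this carry argument: one must notice that both remainders are nonzero precisely when $v_n\ne 0$, and that two nonzero residues summing to $0$ modulo $a_1$ must in fact sum to exactly $a_1$, which is what contributes the additive constant $1$. Everything else reduces to bookkeeping with the minimality characterization of the $v_k$, so I expect the proof to be short once that characterization is isolated at the outset.
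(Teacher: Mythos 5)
Your proof is correct and follows essentially the same route as the paper's: both arguments rest on the fact that an Ap\'ery element $v_k$ is the least element of $S$ in its residue class modulo $a_1$ (the paper phrases this as ``$v_k - a_1 \notin S$''), and your part (3) is the identical carry computation in which the two nonzero remainders sum to exactly $a_1$. The only cosmetic difference is that you deduce (2) formally from (1) via the substitution $(n-m,m)$, whereas the paper repeats the direct congruence argument; both are fine.
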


\begin{proof} (1) Clearly $v_n+v_m = v_{n+m} + ta_1$ for some $t$ since $v_n+v_m \equiv v_{n+m} \mod e$. Moreover, $v_n+v_m \in S$ so $t\ge 0$.

(2) Again $v_n-v_m = v_{n-m} - ta_1$ for some $t$ since $v_n-v_m \equiv v_{n-m} \mod e$. If $t<0$, then $v_n - a_1 \in S$, which is a contradiction.

(3) By (1), $v_n + v_{-n} = v_{0} + ta_1 = ta_1$ where $t\ge 0$. Thus $v_n -a_1\sfloor{v_n} + v_{-n} -a_1\sfloor{v_{-n}}$ is a positive multiple of $a_1$ when $v_n \ne 0$. Since both $v_n -a_1\sfloor{v_n}$ and $v_{-n} -a_1\sfloor{v_{-n}}$ are positive and strictly less than $a_1$, we have $v_n -a_1\sfloor{v_n} + v_{-n} -a_1\sfloor{v_{-n}} = a_1$.
\end{proof}

\noindent Statements (1) and (2) of Lemma \ref{apnot} are also given by \citet[Lemma 2.6]{MCH}. The next definition facilitates the use of both notations for $\Ap(S)$.

\begin{ddef}\label{biject} For $i\in \{0,1,\dots,e-1\}$, let $\wh \imath\in \{1,2,\dots,e\}$ denote the integer such that $w_i = v_{\wh \imath}$.
\end{ddef}

Lemma \ref{apnot} and Definition \ref{biject} will be used extensively in Section 5. However, they are occasionally used throughout the paper and hence recorded here. Notice that $\wh \imath\in \{1,2,\dots,e\}$ instead of $\{0,1,\dots,e-1\}$. This will be useful in Section 5 where we compute $\sigma(\,\wh \imath\,)$ (see Definition \ref{gv gs}).

The {\em numerical semigroup ring} corresponding to $S$ is $R=k[[S]]=k[[x^{a_1},\dots,x^{a_\nu}]]$ where $k$ is a field. This is a one-dimensional local domain with maximal ideal $m=(x^{a_1},\dots,x^{a_\nu})$. The multiplicity and embedding dimension of $S$ coincide with the ring-theoretic notions for the ring $R$. Unless stated otherwise, it is assumed that $R\ne k[[x]]$. For the semigroup $S$, this means that $S \ne \N_0$, or equivalently $\nu\ge 2$.

A parameter ideal $Q$ of $R$ is a principal ideal generated by a nonzero, nonunit element. Although ultimately all of the parameter ideals of $R$ are of interest (and some results with this generality are provided), the focus is on monomial parameter ideals. These are principal ideals generated by a monomial. The benefit is that one can work directly with the numerical semigroup.

For the remainder of this paper $g(u)$ denotes the Goto number of $x^uR$ and $\{g(u) \st 0\ne u\in S\}$ is the set of Goto numbers of $S$. Moreover $\ord(n) = \ord(x^n) = k$ where $x^n\in m^k\setminus m^{k+1}$ if $n\in S$. This is the {\em $m$-adic order of $x^n$}. To simplify statements, we will accept the convention that $\ord(n) = -1$ if $n\not\in S$.

Theorem \ref{main} and Corollary \ref{cmain} are the main results of this section. The former characterizes the Goto number of a nonzero element of $S$ using the Ap\'ery sets $\Ap(S;\alpha)$ where $1\le \alpha\le e$, and the latter introduces the Goto vector of $S$. We begin by defining the following sets.

\begin{ddef}\
\begin{enumerate}
\item $T = \{z \in \Z\setminus S \st z+u \in S \text{ for all } 0\ne u\in S\}$
\item $\A(u) = \{\alpha\in \{1,2,\dots,e\} \st u-\alpha \in S\}$
\item $\A(S) = \{1,2,\dots,e\}$
\end{enumerate}
\end{ddef}
\noindent Results about $T$ can be found in \citep{FGH} and \citep{BDF}. $\A(u)$ is introduced here for its usefulness in studying Goto numbers. Both have interesting connections to the Ap\'ery set of $S$ as shown in Lemma \ref{maxminchar} and Corollary \ref{Acor}.

\begin{thm}\label{main} Let $\Sgen$ be a semigroup and $0\ne u\in S$. Then the following integers are equal.
\begin{enumerate}
\item $g(u)$
\item $\min_{\alpha\in \A(u)}\big\{\,\max\{\ord(w) \st w\in\Ap(S,\alpha)\}\,\big\}$
\item $\min_{\alpha\in \A(u)} \big\{\,\max\{\ord(p+\alpha) \st p\in T\}\,\big\}$
\end{enumerate}
\end{thm}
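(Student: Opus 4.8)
The plan is to translate the ring-theoretic Goto number into a combinatorial minimum over translates of Apéry sets, and then to simplify both the indexing set and the inner maximum. Throughout, $R=k[[S]]$ is a one-dimensional analytically irreducible local domain whose normalization is $\overline{R}=k[[x]]$, with normalized valuation $v$ given by $v(x^n)=n$. The integral closure of the monomial parameter ideal $Q=x^uR$ is then $\overline{Q}=Q\overline{R}\cap R=\{r\in R\st v(r)\ge u\}$, the ideal spanned by $\{x^s\st s\in S,\ s\ge u\}$. Since $Q\subseteq Q:m^q$, the ideal $Q:m^q$ is integral over $Q$ exactly when $Q:m^q\subseteq\overline{Q}$, and because $m^{q+1}\subseteq m^q$ forces $Q:m^q\subseteq Q:m^{q+1}$, this containment is monotone in $q$; hence $g(u)$ is the largest $q$ for which it holds. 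I would compute $Q:m^q$ as a monomial ideal: writing $m^q=\langle x^n\st n\in S,\ \ord(n)\ge q\rangle$, a monomial $x^s$ with $s\in S$ lies in $Q:m^q$ iff $s+n-u\in S$ for every $n\in S$ with $\ord(n)\ge q$.

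Putting these together, $Q:m^q\not\subseteq\overline{Q}$ iff there is an $s\in S$ with $s<u$ such that $n-(u-s)\in S$ for all $n\in S$ with $\ord(n)\ge q$. Setting $d=u-s>0$, the set $\{n\in S\st n-d\notin S\}$ is precisely $\Ap(S;d)$, so the condition reads $\ord(n)<q$ for every $n\in\Ap(S;d)$. Writing $M(d)=\max\{\ord(w)\st w\in\Ap(S;d)\}$ and $D_u=\{u-s\st s\in S,\ 0\le s<u\}$, this says $Q:m^q\not\subseteq\overline{Q}$ iff $M(d)<q$ for some $d\in D_u$, i.e. iff $q>\min_{d\in D_u}M(d)$. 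The largest admissible $q$ is therefore $g(u)=\min_{d\in D_u}M(d)$, which is integer (1).

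Next I cut the index set down to $\A(u)$ to obtain (2). Note $\A(u)=D_u\cap\{1,\dots,e\}$, so $\min_{D_u}M\le\min_{\A(u)}M$ trivially. For the reverse inequality I use the inclusion $\Ap(S;\alpha)\subseteq\Ap(S;\alpha+e)$, which holds because $w-\alpha\notin S$ implies $w-\alpha-e\notin S$; consequently $M$ is nondecreasing along arithmetic progressions of common difference $e$. Given any $d\in D_u$, write $d=\alpha+ke$ with $\alpha\in\{1,\dots,e\}$ and $k\ge 0$. Then $u-\alpha=(u-d)+ke\in S$ since $u-d\in S$ and $e\in S$, so $\alpha\in\A(u)$, while $M(\alpha)\le M(\alpha+ke)=M(d)$. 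Hence $\min_{\A(u)}M=\min_{D_u}M=g(u)$, giving (1)$=$(2); applying the same decomposition to $d=u$ also shows $\A(u)\ne\varnothing$.

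Finally I prove (2)$=$(3) by establishing, for each fixed $\alpha$, the termwise identity $M(\alpha)=\max\{\ord(p+\alpha)\st p\in T\}$. Since $\Ap(S;\alpha)$ is a finite order ideal for $\preceq$ and $\ord$ increases strictly along $\prec$, the value $M(\alpha)$ is attained at a $\preceq$-maximal element $w^*$ of $\Ap(S;\alpha)$. I then identify the maximal elements: $w^*$ is maximal iff $w^*+s\notin\Ap(S;\alpha)$ for all $s\in S\setminus\{0\}$, i.e. $w^*+s-\alpha\in S$ for all such $s$, which together with $w^*-\alpha\notin S$ says exactly that $p:=w^*-\alpha\in T$; conversely every $p\in T$ with $p+\alpha\in S$ yields a maximal element $p+\alpha$. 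Thus $M(\alpha)=\max\{\ord(p+\alpha)\st p\in T,\ p+\alpha\in S\}$, and the remaining $p\in T$ contribute only $\ord(p+\alpha)=-1$, leaving the maximum unchanged; this is (3). The crux of the argument is the opening step: correctly identifying $\overline{Q}$ and computing the colon $Q:m^q$ as a monomial ideal, so as to extract the clean equivalence ``$n-d\in S$ for all high-order $n$'' $\iff M(d)<q$''. Once the problem is combinatorialized, the two remaining reductions rest only on the inclusion $\Ap(S;\alpha)\subseteq\Ap(S;\alpha+e)$ and on the order-ideal structure of $\Ap(S;\alpha)$, which pins the inner maximum to a maximal element and hence to $T$.
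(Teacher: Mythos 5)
Your proof is correct and follows essentially the same route as the paper: reduce to monomials via the valuation description of $\overline{x^uR}$, translate $x^s\in x^uR:m^q$ into a condition on $\Ap(S;u-s)$, use the inclusion $\Ap(S;\alpha)\subseteq\Ap(S;\alpha+e)$ to cut the index set down to $\A(u)$, and identify the order-maximal elements of $\Ap(S;\alpha)$ with $\{p+\alpha \st p\in T\}$. The only difference is organizational (you first minimize over the full difference set $D_u$ and then reduce, where the paper proves the two inequalities directly), so no further comment is needed.
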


\begin{proof} Let $R=k[[S]]$ be the ring corresponding to $S$. The integral closure of $x^uR$ is $x^uk[[x]] \cap R = \big(x^s\st s\in S$ and $s\ge u\big)R$.

(1) = (2) Let
$$N=\min_{\alpha\in \A(u)}\big\{\,\max\{\ord(w) \st w\in\Ap(S,\alpha)\,\big\}$$
We first show that $g(u) \le N$. For some $\alpha\in \A(u)$, $N = \max\{\ord(w) \st w\in\Ap(S,\alpha)\}$. We have $s-\alpha \in S$ for all $s\in S$ with $\ord(s)\ge N+1$. But $\alpha = u-b$ where $b\in S$ and $b<u$. So $s+b \in u+S$ for all $s\in S$ with $\ord(s)\ge N+1$. Therefore $x^b \in x^uR : m^{N+1}$ and it follows that $x^uR : m^{N+1}$ is not contained in the integral closure of $x^uR$. This proves that $g(u) \le N$.

Next we show that $N\le g(u)$. Let $b\in S$ with $b<u$ and choose $k$ such that $b+ka_1 < u \le b+(k+1)a_1$. Then $u-b-ka_1 \in \A(u)$. By the definition of $N$ there exists an $s\in S$ with $\ord(s) \ge N$ such that $s-u+b+ka_1 \not\in S$. Thus we have $s+b+ka_1 \not\in u+S$ which implies that $s+b \not\in u+S$. So $x^b \not\in x^uR : m^{N}$. Since $b$ was an arbitrary element of $S$ strictly less than $u$, we conclude that $x^uR : m^{N}$ is integral over $x^uR$. This shows that $N\le g(u)$.

(2) = (3) For a fixed $\alpha$ we show that $\max\{\ord(w) \st w\in\Ap(S,\alpha)\} = \max\{\ord(p+\alpha) \st p\in T\}$. Recall that by convention, if $p+\alpha\not\in \Ap(S;\alpha)$, then $\ord(p+\alpha)=-1$. Thus we need only consider the case when $p+\alpha \in \Ap(S;\alpha)$, and the inequality ``$\ge$" holds. For the reverse inequality ``$\le$", let $w\in \Ap(S;\alpha)$ have the largest order among the elements in $\Ap(S;\alpha)$. Then $w-\alpha \not\in S$, but $w -\alpha + s\in S$ for all $0\ne s\in S$. Therefore $w-\alpha = p \in T$ and $w=p+\alpha$.
\end{proof}

\begin{rmk}\label{mainr} A nice way to think of Theorem \ref{main}(3) is that $g(u) = \ord(p+\alpha)$ for some $\alpha \in \A(u)$ and $p \in T$ satisfying the following conditions.
\begin{enumerate}
\item For any $q \in T$, $\ord(p+\alpha) \ge \ord(q+\alpha)$
\item For any $\alpha' \in \A(u)$, there exists a $p'\in T$ such that $\ord(p+\alpha) \le \ord(p'+\alpha')$
\end{enumerate}
\end{rmk}

Notice that every Goto number of $S$ is determined by the order of $p+\alpha$ for some $p\in T$ and $\alpha \in \A(S)$. Moreover, for a fixed $\alpha$, we need only know the maximum order of $p+\alpha$ as $p$ ranges over the elements of $T$. If these values have been computed for $S$, then determining $g(u)$ is reduced to determining $\A(u)$. This motivates the following definition.

\begin{ddef}\label{gv gs} Let $\Sgen$ be a semigroup. For each $\alpha \in \A(S)$ let $\sigma(\alpha) = \max\{\ord(w) \st w\in \Ap(S;\alpha)\} = \max\{\ord(p+\alpha) \st p\in T\}$. Then $gv(S)=(\sigma(1),\dots,\sigma(e))$ will be called the {\em Goto vector of $S$}. Sometimes it is convenient to work with a set rather than a vector, so let $gs(S) = \{\sigma(1),\dots,\sigma(e)\}$ be the {\em Goto set of $S$}.
\end{ddef}

The next corollary follows immediately from Theorem \ref{main}.

\begin{cor}\label{cmain} Let $S$ be a semigroup, $gv(S)=(\sigma(1),\dots,\sigma(e))$ the Goto vector of $S$, and $0\ne u\in S$. Then $g(u) = \min\{\sigma(\alpha) \st \alpha\in \A(u)\}$.
\end{cor}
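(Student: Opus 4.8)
The plan is to derive this statement directly from Theorem \ref{main} by rewriting it in the notation of Definition \ref{gv gs}; no new argument beyond unwinding the definitions should be required. The first step is to invoke the equality of (1) and (2) in Theorem \ref{main}, which asserts that for every nonzero $u\in S$,
$$g(u) = \min_{\alpha\in \A(u)}\big\{\,\max\{\ord(w) \st w\in\Ap(S;\alpha)\}\,\big\}.$$
The second step is to recognize, from Definition \ref{gv gs}, that for each $\alpha\in \{1,2,\dots,e\}$ the inner quantity $\max\{\ord(w) \st w\in\Ap(S;\alpha)\}$ is precisely $\sigma(\alpha)$, the $\alpha$-th entry of the Goto vector. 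Substituting this into the displayed equation yields $g(u) = \min\{\sigma(\alpha)\st \alpha\in \A(u)\}$, which is the claim.

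Before substituting I would verify the one compatibility point: that every $\alpha$ appearing in the minimum is a legitimate index for $\sigma$. This is immediate, since by definition $\A(u)\subseteq \{1,2,\dots,e\} = \A(S)$, and $\{1,2,\dots,e\}$ is exactly the index set over which the entries $\sigma(1),\dots,\sigma(e)$ are defined. One also uses that $\A(u)$ is nonempty, so that the minimum is taken over a nonempty set; this is already implicit in Theorem \ref{main}, but if a standalone justification is wanted I would take $b$ to be the largest element of $S$ with $b<u$ and observe that $u-b\in\{1,\dots,e\}$ and $u-(u-b)=b\in S$, so $u-b\in\A(u)$.

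I do not expect any genuine obstacle here: the mathematical content resides entirely in Theorem \ref{main}, and the corollary merely repackages it so as to isolate the vector $gv(S)$ as a once-computed invariant of $S$. The substantive point is the payoff of this repackaging rather than its proof, namely that once $gv(S)$ is known, computing an individual Goto number $g(u)$ reduces to the purely combinatorial task of determining the set $\A(u)$ and minimizing the corresponding entries of the Goto vector.
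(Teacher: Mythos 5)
Your proposal is correct and matches the paper, which simply states that the corollary follows immediately from Theorem \ref{main}: the minimum in Theorem \ref{main}(2) is, entry by entry, the minimum of the corresponding components $\sigma(\alpha)$ of the Goto vector by Definition \ref{gv gs}. Your added checks that $\A(u)\subseteq\{1,\dots,e\}$ and that $\A(u)$ is nonempty are harmless and accurate, though the paper leaves them implicit.
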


\begin{ex}\label{457} This example demonstrates how the Goto vector of a semigroup can be used to find the Goto numbers. Let $S=<4,5,7> = \{0,4,5,7,\rightarrow\}$ where the $\rightarrow$ indicates that all integers greater than 7 are in $S$. Then $T = \{3,6\}$ and $\A(S)=\{1,2,3,4\}$. We have $\sigma(1) = \max\{\ord(4),\ord(7)\} = 1$, $\sigma(2) = \max\{\ord(5),\ord(8)\} = 2$, $\sigma(3) = \max\{\ord(6),\ord(9)\} = 2$, and $\sigma(4) = \max\{\ord(7),\ord(10)\} = 2$. Notice that $6\not\in S$, so by convention $\ord(6)=-1$. Now the Goto vector of $S$ is $gv(S)=(1,2,2,2)$. For $0\ne u \in S$, if $1\in \A(u)$, then $g(u) = 1$ and otherwise $g(u) = 2$. Therefore
\begin{enumerate}
\item[] $g(4) =2$
\item[] $g(7) = 2$
\item[] $g(u) = 1$ for all other $0\ne u\in S$
\end{enumerate}
\end{ex}

The rest of this section is devoted to some immediate consequences of Theorem \ref{main}. Propositions \ref{gAp}, \ref{cons}, and \ref{consym} use the set $\A(u)$ to obtain results about $g(u)$. For a semigroup $S$ we set
\begin{eqnarray*}
\tau = \min\{g(s) \st 0\ne s\in S\}\\
\rho = \max\{g(s) \st 0\ne s\in S\}
\end{eqnarray*}

\begin{rmk}
Let $R=k[[S]]$ be the ring corresponding to $S$. We always have $\tau = \min\{g(Q) \st Q$ is a parameter ideal of $R\}$, but in general we only have $\rho \le \max\{g(Q) \st Q$ is a parameter ideal of $R\}$ \citep[Theorem 4.1, Example 4.4]{HS}.
\end{rmk}

\begin{prop}\label{gAp} Let $\Sgen$ be a semigroup. Then $g(a_1) = \max\{\ord(w) \st w\in \Ap(S)\}$.
\end{prop}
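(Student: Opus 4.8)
The plan is to read this off Corollary \ref{cmain} by computing the single set $\A(a_1)$. By Corollary \ref{cmain} we have $g(a_1) = \min\{\sigma(\alpha) \st \alpha\in \A(a_1)\}$, and by Definition \ref{gv gs} the index $\alpha = e$ gives $\sigma(e) = \max\{\ord(w) \st w\in \Ap(S;e)\} = \max\{\ord(w) \st w\in \Ap(S)\}$, since $\Ap(S) = \Ap(S;e)$ and $e = a_1$. So the whole statement reduces to the claim that $\A(a_1) = \{e\}$: once the minimum is over the single index $\alpha = e$, it is forced to equal $\sigma(e)$, which is exactly the asserted right-hand side.

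To establish $\A(a_1) = \{e\}$, I would unwind the definition $\A(a_1) = \{\alpha\in \{1,2,\dots,e\} \st a_1-\alpha\in S\}$ and use $a_1 = e$. For $\alpha$ ranging over $\{1,\dots,e\}$, the difference $a_1-\alpha = e-\alpha$ ranges over $\{0,1,\dots,e-1\}$. Because $e$ is the multiplicity of $S$, i.e. the least positive element of $S$, the only element of $S$ lying in the interval $[0,e-1]$ is $0$. Hence the condition $a_1-\alpha\in S$ forces $a_1-\alpha = 0$, that is $\alpha = e$. Conversely $\alpha = e$ does satisfy $a_1 - e = 0\in S$, so $\A(a_1)$ is the nonempty singleton $\{e\}$.

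Combining the two steps completes the argument: $g(a_1) = \min\{\sigma(\alpha)\st \alpha\in\{e\}\} = \sigma(e) = \max\{\ord(w)\st w\in\Ap(S)\}$. There is no real obstacle here beyond the bookkeeping; the one point to verify carefully is the identification $\A(a_1) = \{e\}$, whose content is simply that $a_1$ is the smallest nonzero element of $S$, so that the parameter $a_1$ is minimal and its associated index set collapses to a single $\alpha$. If one preferred to avoid Corollary \ref{cmain}, the same observation plugs directly into Theorem \ref{main}(2), where the outer minimum over $\alpha\in\A(a_1)$ again has just one term; this is an equally short route to the same conclusion.
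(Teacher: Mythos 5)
Your argument is correct and is essentially the paper's own proof: the paper simply notes that $\A(a_1)=\{e\}$ and invokes Theorem \ref{main}(2), which is exactly your reduction (via the equivalent Corollary \ref{cmain}) with the verification of $\A(a_1)=\{e\}$ written out. No issues.
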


\begin{proof} $\A(a_1) = \{e\}$, and so the result follows from Theorem \ref{main}(2).
\end{proof}

\begin{prop}\label{cons} Let $\Sgen$ be a semigroup. Also let $u$, $u'$, and $u_i$ for all $i\in \I$ for some index set $\I$ be nonzero elements of $S$.
\begin{enumerate}
\item If $\A(u) \subset \A(u')$, then $g(u)\ge g(u')$
\item $g(u+u') \le \min\{g(u),g(u')\}$
\item $\rho = \max\{g(a_i) \st 1\le i\le \nu\}< \infty$
\item\label{goto A min} $\tau = g(u)$ for all $u\ge f+a_1+1$
\item $\tau = \min\{\sigma(1),\sigma(2),\dots,\sigma(e)\}$
\item\label{goto A U} If\, $\bigcup_{i\in \I} \A(u_i) = \A(S)$, then $\tau = \min\{g(u_i)\st i\in \I\}$
\item\label{min a1} $\tau = \min\{g(a_1),g(f+a_1)\}$
\end{enumerate}
\end{prop}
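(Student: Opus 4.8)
The plan is to prove all seven parts of Proposition~\ref{cons} by leveraging Corollary~\ref{cmain}, which reduces every Goto number to $g(u) = \min\{\sigma(\alpha) \st \alpha \in \A(u)\}$, together with basic properties of the sets $\A(u)$. The unifying idea is that $g(u)$ depends only on $\A(u)$, so the combinatorics of these subsets of $\A(S) = \{1,\dots,e\}$ controls everything. First I would prove part~(1): if $\A(u) \subset \A(u')$, then taking a minimum over the larger index set $\A(u')$ can only produce a value less than or equal to the minimum over $\A(u)$, so $g(u) = \min_{\alpha \in \A(u)} \sigma(\alpha) \ge \min_{\alpha \in \A(u')} \sigma(\alpha) = g(u')$. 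For part~(2), I would show $\A(u) \cup \A(u') \subset \A(u+u')$: if $u - \alpha \in S$, then $(u+u') - \alpha = (u-\alpha) + u' \in S$ since $u' \in S$, and symmetrically. Then $g(u+u') = \min_{\alpha \in \A(u+u')} \sigma(\alpha) \le \min_{\alpha \in \A(u) \cup \A(u')} \sigma(\alpha) = \min\{g(u), g(u')\}$.

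Next I would handle the statements that identify $\tau$ and $\rho$ with concrete quantities. For part~(5), observe that as $u$ ranges over all nonzero elements of $S$, the sets $\A(u)$ range over a collection of nonempty subsets of $\A(S)$, and $\tau = \min_u g(u) = \min_u \min_{\alpha \in \A(u)} \sigma(\alpha)$. Since the singleton $\A(u) = \{e\}$ is realized by $u = a_1$ (by Proposition~\ref{gAp}'s computation $\A(a_1) = \{e\}$) and, more importantly, every $\alpha \in \A(S)$ is realized as an element of some $\A(u)$ for sufficiently large $u$, the overall minimum equals $\min\{\sigma(1),\dots,\sigma(e)\}$. The cleanest route is to show that for $u$ large enough (specifically $u \ge f + a_1 + 1$), we have $\A(u) = \A(S) = \{1,\dots,e\}$: indeed $u - \alpha \ge f + 1 > f$ for every $\alpha \in \{1,\dots,e\}$, so $u - \alpha \in S$ by definition of the Frobenius number $f$. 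This immediately gives part~(4), since then $g(u) = \min_{\alpha \in \A(S)} \sigma(\alpha) = \min\{\sigma(1),\dots,\sigma(e)\}$, and combined with part~(5) this is exactly $\tau$. Part~(5) then follows because no $g(u)$ can dip below $\min\{\sigma(1),\dots,\sigma(e)\}$ (it is a minimum of a subcollection of the $\sigma(\alpha)$), and this bound is attained by the large-$u$ elements.

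For part~(6), I would argue that if $\bigcup_{i \in \I} \A(u_i) = \A(S)$, then $\min\{g(u_i) \st i \in \I\} = \min_i \min_{\alpha \in \A(u_i)} \sigma(\alpha) = \min_{\alpha \in \bigcup_i \A(u_i)} \sigma(\alpha) = \min_{\alpha \in \A(S)} \sigma(\alpha) = \tau$, using part~(5) in the last step. Part~(7) is the special case $\I = \{1, 2\}$ with $u_1 = a_1$ and $u_2 = f + a_1$: I need $\A(a_1) \cup \A(f+a_1) = \A(S)$. Here $\A(a_1) = \{e\}$, and for $\A(f+a_1)$ I would check that $\alpha \in \A(f + a_1)$ for all $\alpha \in \{1,\dots,e-1\}$, since $(f + a_1) - \alpha = f + (a_1 - \alpha) > f$ when $\alpha < a_1 = e$, hence lies in $S$; together with $e \in \A(a_1)$ the union is all of $\A(S)$, and part~(6) finishes it. Finally part~(3): the inequality $\rho \ge \max\{g(a_i)\}$ is trivial since each $a_i \in S$, and for the reverse I would use part~(2) inductively—every nonzero $u \in S$ is a sum of minimal generators, so $g(u) \le \min_i g(a_i) \le \max_i g(a_i)$ among the generators appearing—then note $\rho < \infty$ because each $\sigma(\alpha)$ is finite (the Ap\'ery set is finite, so orders are bounded). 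The main obstacle I anticipate is getting the realization claim in part~(5) airtight: I must confirm that large $u$ genuinely yields $\A(u) = \A(S)$ and that this forces the global minimum, rather than merely bounding it; the Frobenius-number argument is the crux, and care is needed that $u - \alpha$ is nonnegative and in $S$ for every $\alpha$ in the full range $1 \le \alpha \le e$.
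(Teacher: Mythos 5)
Your proposal is correct and follows essentially the same route as the paper: everything is reduced to $g(u)=\min\{\sigma(\alpha)\st\alpha\in\A(u)\}$ from Corollary \ref{cmain}, with the containments $\A(u)\cup\A(u')\subset\A(u+u')$, $\A(u)=\A(S)$ for $u\ge f+a_1+1$, $\A(a_1)=\{e\}$, and $\{1,\dots,e-1\}\subset\A(f+a_1)$ doing the work, exactly as in the paper's proof. The only cosmetic difference is that you verify some of these containments explicitly where the paper asserts them, and you derive (4) from the explicit formula rather than from part (1); the substance is identical.
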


\begin{proof} (1) If $\A(u) \subset \A(u')$, then $g(u)  = \min\{\sigma(\alpha) \st \alpha\in \A(u)\} \ge \min\{\sigma(\alpha) \st \alpha\in \A(u')\} = g(u')$.

(2) This follows from (1) since $\A(u) \subset \A(u+u')$ and $\A(u') \subset \A(u+u')$.

(3) Note that if $0\ne u\in S$, then $u = \sum_{i=1}^{\nu} c_ia_i$ where $c_j>0$ for at least one $j$. But then by (2), $g(u)\le g(a_j)$. This shows that $\rho = \max\{g(a_i) \st 1\le i\le \nu\}$. Furthermore $g(a_i) <\infty$ for all $i$ and the set of minimal generators is finite, so $\max\{g(a_i) \st 1\le i\le \nu\}< \infty$.

(4) If $u \ge f+a_1+1$, then $\A(s) \subset \A(u) = \A(S)$ for all $0\ne s\in S$. So by (1), $g(u) \le g(s)$ for all $0\ne s\in S$.

(5) $\tau = g(f+a_1+1) = \min\{\sigma(\alpha) \st \alpha\in \A(S)\}= \min\{\sigma(1),\sigma(2),\dots,\sigma(e)\}$.

(6) There exists a $\beta$ such that $\sigma(\beta) = \min\{\sigma(\alpha) \st \alpha \in \A(S)\}$ and by hypothesis $\beta \in \A(u_j)$ for some $u_j$. Thus $\tau = g(u_j) = \min\{g(u_i)\st i\in \I\}$.

(7) This follows from (6) since $\A(a_1) = \{e\}$ and $\A(f+a_1) = \{1,2,\dots,e-1\}$.
\end{proof}

Statement (2) of Proposition \ref{cons} is a special case of \citep[Corollary 1.7]{HS}, and (3) and (4) were also shown by \citet[Proposition 4.3, Theorem 4.1]{HS}.

Next we consider symmetric semigroups.

\begin{ddef}\label{symmetric} A semigroup $S$ with Frobenius number $f$ is called {\em symmetric} if whenever $x+y = f$ for $x,y\in \Z$, then exactly one of $x$ and $y$ belongs to $S$. Equivalently, $S$ is symmetric if exactly half of the elements in $\{0,1,\dots,f\}$ are in $S$.
\end{ddef}

Several characterizations of the symmetric property are given in Proposition \ref{psym}. For Proposition \ref{symgv}, which follows immediately from Corollary \ref{cmain}, it suffices to know that $S$ is symmetric if and only if $T=\{f\}$.

\begin{prop}\label{symgv} Let $S$ be a symmetric semigroup with Frobenius number $f$ and $0\ne u\in S$. Then $gv(S) = (\ord(f+1),\ord(f+2),\dots,\ord(f+a_1))$ and $g(u) = \min\{\ord(f+\alpha)\st \alpha\in \mathcal A(u)\}$.
\end{prop}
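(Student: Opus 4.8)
The plan is to derive Proposition~\ref{symgv} directly from the machinery already in place, specifically Corollary~\ref{cmain} together with the stated characterization that $S$ is symmetric if and only if $T = \{f\}$. The two claims—the formula for $gv(S)$ and the formula for $g(u)$—are really the same computation, so I would establish the Goto vector first and then read off the Goto number. The whole argument is a matter of specializing the general formulas to the case where the set $T$ collapses to a single element.

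First I would recall from Definition~\ref{gv gs} that for each $\alpha \in \A(S) = \{1,2,\dots,e\}$ we have
$$\sigma(\alpha) = \max\{\ord(p+\alpha) \st p \in T\}.$$
Since $S$ is symmetric, $T = \{f\}$, so the maximum is taken over a single element and collapses to $\sigma(\alpha) = \ord(f+\alpha)$. Running $\alpha$ from $1$ to $e = a_1$ then gives
$$gv(S) = \big(\ord(f+1), \ord(f+2), \dots, \ord(f+a_1)\big),$$
which is the first assertion. Here I would note that each $f+\alpha$ does lie in $S$ for $1 \le \alpha \le e$: this is exactly the defining property of $T$, namely that $f + u \in S$ for every nonzero $u \in S$, applied with $u = \alpha$ (and using $\alpha \le e = a_1$ so that the relevant elements are genuine semigroup elements, with the $\ord = -1$ convention harmlessly available if needed). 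So the orders appearing are all honest $m$-adic orders.

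For the second assertion I would simply invoke Corollary~\ref{cmain}, which states $g(u) = \min\{\sigma(\alpha) \st \alpha \in \A(u)\}$ for any nonzero $u \in S$. Substituting the values $\sigma(\alpha) = \ord(f+\alpha)$ just computed yields
$$g(u) = \min\{\ord(f+\alpha) \st \alpha \in \A(u)\},$$
as claimed. This finishes the proof.

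Honestly, there is no real obstacle here; as the author already flags, this proposition ``follows immediately from Corollary~\ref{cmain}.'' The only point requiring a moment's care is confirming that the single-element set $T = \{f\}$ makes the outer maximum in the definition of $\sigma(\alpha)$ trivial, and checking that each $f + \alpha$ with $1 \le \alpha \le e$ is indeed in $S$ so that $\ord(f+\alpha)$ is a positive integer rather than the placeholder value $-1$. Both follow directly from the definition of $T$. The substance of the result lives entirely in Theorem~\ref{main} and the symmetric-iff-$T=\{f\}$ equivalence, both assumed available, so this statement is a short corollary-style deduction rather than a theorem with an independent argument.
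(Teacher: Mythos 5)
Your proof is correct and matches the paper exactly: the paper offers no separate argument, stating only that the proposition follows immediately from Corollary~\ref{cmain} once one knows $T=\{f\}$ for symmetric $S$, which is precisely your specialization of $\sigma(\alpha)=\max\{\ord(p+\alpha)\st p\in T\}$ to the singleton $T$. One tiny quibble: the cleanest reason each $f+\alpha$ lies in $S$ is simply that $f$ is the largest integer not in $S$, so $f+\alpha>f$ is automatically in $S$; your appeal to the defining property of $T$ with $u=\alpha$ is slightly off since $\alpha\in\{1,\dots,e\}$ need not itself belong to $S$.
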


The next proposition is similar to Proposition \ref{cons} with the additional assumption that $S$ is symmetric.

\begin{prop}\label{consym} Let $\Sgen$ be a symmetric semigroup with Frobenius number $f$ and $0\ne u\in S$.
\begin{enumerate}
\item $g(a_1) = \ord(f+a_1)$
\item $g(a_2) = \ord(f+a_2-\big\lfloor\frac{a_2}{a_1}\big\rfloor a_1)$
\item\label{prop3 2} $\tau = \min\{\ord(f+1),\ord(f+2),\dots,\ord(f+a_1)\}$
\item If $\alpha \in \A(u)$, then $\tau = \min\{g(u),g(f+\alpha)\}$
\item $\tau = g(u)$ for all $u> f$ if the Goto vector does not have a unique least entry
\item Exactly one element $u$ of $S$ with $u>f$ does not obtain the minimal Goto number if the Goto vector has a unique least entry
\end{enumerate}
\end{prop}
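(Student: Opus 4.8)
The plan is to convert every clause into a statement about the finite index set $\A(u)\subseteq\{1,\dots,e\}$ together with the values $\sigma(\alpha)=\ord(f+\alpha)$, using the symmetric formula $g(u)=\min\{\ord(f+\alpha)\st\alpha\in\A(u)\}$ of Proposition \ref{symgv}. The single elementary fact doing all the work is that for $\alpha,\beta\in\{1,\dots,e\}$ the difference $\beta-\alpha$ lies in $(-e,e)$, and the only element of $S$ in that range is $0$ (since $e=a_1$ is the multiplicity); this controls which small translates land in $S$.

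Clauses (1)--(3) are warm-ups. Because the only element of $S$ in $\{0,1,\dots,a_1-1\}$ is $0$, we get $\A(a_1)=\{e\}$ and hence $g(a_1)=\sigma(e)=\ord(f+a_1)$. For (2), the elements of $S$ below $a_2$ are exactly the multiples of $a_1$, and the window $[a_2-a_1,a_2-1]$ contains precisely one of them, namely $\lfloor a_2/a_1\rfloor a_1$; thus $\A(a_2)$ is the singleton $\{a_2-\lfloor a_2/a_1\rfloor a_1\}$ and the stated value of $g(a_2)$ follows. Clause (3) is just $\tau=\min\{\sigma(1),\dots,\sigma(e)\}$ from Proposition \ref{cons}(5) rewritten via $\sigma(\alpha)=\ord(f+\alpha)$.

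The crux is the identity $\A(f+\alpha)=\{1,\dots,e\}\setminus\{\alpha\}$ for each $\alpha\in\{1,\dots,e\}$. Invoking the symmetry criterion $z\in S\iff f-z\notin S$, membership $f+\alpha-\beta\in S$ is equivalent to $\beta-\alpha\notin S$, which by the interval observation fails precisely when $\beta=\alpha$. Granting this, (4) is immediate: if $\alpha\in\A(u)$ then $\A(u)\cup\A(f+\alpha)=\{1,\dots,e\}=\A(S)$, so Proposition \ref{cons}(6) applied to the pair $u,\,f+\alpha$ gives $\tau=\min\{g(u),g(f+\alpha)\}$.

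For (5) and (6) I would first note that every $u\in S$ with $u>f$ equals $f+j$ for some $j\ge1$; when $j>e$, Proposition \ref{cons}(4) gives $\A(u)=\A(S)$ and so $g(u)=\tau$, while for $1\le j\le e$ the identity above yields $g(f+j)=\min_{\beta\ne j}\sigma(\beta)$. Everything then reduces to the behavior of $\min(\sigma(1),\dots,\sigma(e))$ under deletion of one coordinate. If the least entry is attained at two or more indices, deleting any single $j$ leaves a minimizer, so $g(f+j)=\tau$ for every $j$ and (5) holds. If the least entry is attained at a unique index $\beta_0$, then $g(f+\beta_0)=\min_{\beta\ne\beta_0}\sigma(\beta)>\tau$, whereas for all other $j$ (and all $j>e$) the coordinate $\beta_0$ survives in the relevant $\A$-set, forcing $g(f+j)=\tau$; hence $u=f+\beta_0$ is the unique exception, giving (6). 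I expect no genuine difficulty beyond the $\A(f+\alpha)$ identity; the only place demanding care is the bookkeeping here---matching the deleted coordinate to the omitted $\alpha$ and correctly absorbing the infinitely many $u>f+e$ into the $g(u)=\tau$ case.
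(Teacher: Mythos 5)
Your proposal is correct and follows essentially the same route as the paper: compute $\A(a_1)$ and $\A(a_2)$ directly for (1)--(2), quote Proposition \ref{cons}(5) for (3), establish the key identity $\A(f+\alpha)=\A(S)\setminus\{\alpha\}$ via symmetry for (4), and reduce (5)--(6) to whether the minimum of the Goto vector is attained uniquely. The only cosmetic difference is that you phrase the key identity through the criterion $z\in S\iff f-z\notin S$ while the paper writes out $(f+\alpha-\beta)+(\beta-\alpha)=f$; these are the same argument.
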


\begin{proof} (1) and (2) We only need to notice that $\A(a_1) = \{e\}$ and $\A(a_2) = \{a_2-\big\lfloor\frac{a_2}{a_1}\big\rfloor a_1\}$.

(3) This follows from Proposition \ref{cons}(5).

(4) Using Proposition \ref{cons}(6), it suffices to show that $\A(f+\alpha) = \{\A(S)\setminus \alpha\}$. Clearly $\alpha \not\in \A(f+\alpha)$. If $\alpha\ne \beta \in \A(S)$, then $\beta - \alpha < a_1$ and $\beta-\alpha \ne 0$. So $\beta - \alpha \not\in S$ and since $(f+\alpha-\beta) + (\beta-\alpha) = f$, we have $f+\alpha-\beta\in S$. Therefore $\beta \in \A(f+\alpha)$.

(5) We only need to check for $g(s)$ when $f< s < f+a_1+1$. In this case, $s = f+\alpha$ for some $\alpha \in \A(S)$. As in (4), we have that $\A(s) = \A(S)\setminus\{\alpha\}$. Since the Goto vector has more than one least entry, there exists $\beta \in \A(s)$ such that $\sigma(\beta) = \min\{\sigma(1),\dots,\sigma(a_1)\}$.

(6) Let $\sigma(\beta)$ be the unique least entry of the Goto vector. Then $g(f+\beta) = \min\{\sigma(\alpha) \st \alpha \in \A(S)\setminus\{\beta\}\} > \tau$. For any other element $s\in S$ with $s> f$, $\beta \in \A(s)$. Thus $g(s) = \tau$.
\end{proof}

Theorem \ref{main} (or Remark \ref{mainr}) enables us to establish bounds for the Goto numbers by bounding the order of an element of $S$. For $s\in S$, $\ord(s)=\max\{\sum_{i=1}^{\nu} c_i \st s=\sum_{i=1}^{\nu} c_ia_i$ where $c_i\ge 0\}$. A sum $\sum_{i=1}^{\nu} c_ia_i$ is called a {\em representation} of the element $s\in S$ if $s=\sum_{i=1}^{\nu} c_ia_i$. If in addition $\sum_{i=1}^{\nu} c_i = \ord(s)$, we say that it is a {\em maximal representation}. The next lemma gives some bounds on $\ord(s)$.

\begin{lem}\label{about ord} Let $\Sgen$ be a semigroup and $s\in S$.
\begin{enumerate}
\item If $j>0$ is an integer such that there exists a maximal representation $ s=\sum^j_{i=1} c_ia_i$, then $\big\lceil\frac{s}{a_j}\big\rceil \le \ord(s)$.
\item If $j'>0$ is an integer such that there exists a maximal representation $s=\sum^{\nu}_{i=j'} c_ia_i$ then $\ord(s) \le \big\lfloor\frac{u}{a_{j'}}\big\rfloor$.
\end{enumerate}
\end{lem}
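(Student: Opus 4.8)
The plan is to exploit the monotonicity $a_1 < a_2 < \cdots < a_\nu$ of the generators together with the defining property of a maximal representation, namely that its coefficients sum to exactly $\ord(s)$. Both inequalities then fall out of a one-line estimate; no induction or appeal to the earlier structural results (Theorem \ref{main} and its consequences) is required, so I would prove the lemma directly from the definitions.

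For part (1), I would begin from the hypothesized maximal representation $s = \sum_{i=1}^{j} c_i a_i$, so that $\sum_{i=1}^{j} c_i = \ord(s)$. Since every generator appearing satisfies $a_i \le a_j$, replacing each $a_i$ by $a_j$ can only enlarge the sum, giving $s \le a_j \sum_{i=1}^{j} c_i = a_j\,\ord(s)$. Dividing by $a_j$ yields $s/a_j \le \ord(s)$, and because $\ord(s)$ is an integer, $\big\lceil s/a_j \big\rceil \le \ord(s)$, as claimed.

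Part (2) is the mirror image. From the maximal representation $s = \sum_{i=j'}^{\nu} c_i a_i$, with $\sum_{i=j'}^{\nu} c_i = \ord(s)$, I would use $a_i \ge a_{j'}$ for every index appearing to obtain $s \ge a_{j'} \sum_{i=j'}^{\nu} c_i = a_{j'}\,\ord(s)$. Hence $\ord(s) \le s/a_{j'}$, and since $\ord(s)$ is an integer, $\ord(s) \le \big\lfloor s/a_{j'} \big\rfloor$. (I would read the $u$ in the displayed statement as $s$, evidently a typo.)

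There is no serious obstacle here; the closest thing to a subtlety is recognizing \emph{where} the maximality hypothesis is genuinely used, and I would flag this asymmetry so the two bounds are not mistaken for exact duals. In part (1) maximality is inessential: any representation supported on $a_1,\dots,a_j$ already gives $s \le a_j \sum c_i \le a_j\,\ord(s)$, since $\sum c_i \le \ord(s)$ for \emph{every} representation. In part (2), however, the estimate $s \ge a_{j'} \sum c_i$ bounds $\ord(s)$ from above only once we know $\sum c_i$ equals $\ord(s)$ exactly; there the maximality assumption cannot be dropped.
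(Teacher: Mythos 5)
Your proof is correct and rests on the same one-line estimate as the paper's: bounding $s=\sum c_ia_i$ by $a_j\sum c_i$ (resp.\ $a_{j'}\sum c_i$) and using $\sum c_i=\ord(s)$; the paper merely phrases this as a proof by contradiction, while you argue directly and then round using integrality of $\ord(s)$. Your side remark correctly identifies that maximality of the representation is only genuinely needed in part (2), and you are right that the $u$ in the statement of (2) is a typo for $s$.
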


\begin{proof} For (1), suppose that $\ord(s) \le \big\lceil\frac{s}{a_j}\big\rceil-1$. Then $s \le (\big\lceil\frac{s}{a_j}\big\rceil-1)a_j \Rightarrow s \le \big\lceil\frac{s}{a_j}\big\rceil a_j-a_j$ which is a contradiction. Likewise for (2), suppose that $\ord(s) \ge \big\lfloor\frac{s}{a_{j'}}\big\rfloor+1$. Then $s \ge (\big\lfloor\frac{s}{a_{j'}}\big\rfloor+1)a_{j'} \Rightarrow s \ge \big\lfloor\frac{s}{a_{j'}}\big\rfloor a_1+a_1$ which is a contradiction.
\end{proof}

\begin{prop}\label{bounds} Let $\Sgen$ be a semigroup with Frobenius number $f$ and $0\ne u\in S$. Also let $\wt u$ be the largest element in $S$ that is strictly smaller than $u$.
\begin{enumerate}
\item[{Upper}] {\em Bounds}
\item $g(u) \le \big\lfloor\frac{f+u-\wt u}{a_1}\big\rfloor$
\item $g(a_1) \le \big\lfloor\frac{f+a_1}{a_2}\big\rfloor$
\item $\rho \le \big\lceil\frac{f}{a_1}\big\rceil$
\item[{Lower}] {\em Bounds}
\item $g(u) \ge \big\lceil\frac{f+u-\wt u}{a_\nu}\big\rceil$
\item $g(a_\nu) \ge \big\lceil\frac{f+a_\nu-\wt{a_\nu}}{a_{\nu-1}}\big\rceil$
\item $\tau \ge \sceil[a_\nu]{f}$
\end{enumerate}
\end{prop}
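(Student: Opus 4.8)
The plan is to derive all six inequalities from the formula $g(u) = \min_{\alpha \in \A(u)} \sigma(\alpha)$ of Corollary \ref{cmain} (equivalently Theorem \ref{main}(3)), where $\sigma(\alpha) = \max_{p\in T}\ord(p+\alpha)$, fed into the order estimates of Lemma \ref{about ord}. First I would record four elementary facts about $S$. (i) If $s<s'$ are consecutive elements of $S$, then $s'-s\le a_1$, since otherwise $s+a_1\in S$ would lie strictly between them; hence $u-\wt u\le a_1$, and since $u-\alpha$ lies strictly between $\wt u$ and $u$ for every $1\le\alpha<u-\wt u$, the minimum of $\A(u)$ is exactly $u-\wt u$. (ii) Since $f\notin S$ but $f+s\in S$ for every $0\ne s\in S$, we have $f\in T$, and as $T\subseteq\Z\setminus S$ this gives $\max T=f$. (iii) $\max\Ap(S)=f+a_1$: indeed $f+a_1\in\Ap(S)$, while any $w\in\Ap(S)$ has $w-a_1\notin S$, so $w-a_1\le f$. (iv) $a_1\nmid f$, since otherwise $f$ would be a positive multiple of $a_1$, hence in $S$.

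For the upper bounds I would evaluate $\sigma$ at a single well-chosen $\alpha$. For (1), take $\alpha=u-\wt u=\min\A(u)$, so $g(u)\le\sigma(u-\wt u)$; every $p\in T$ has $p\le f$, and Lemma \ref{about ord}(2) with $j'=1$ gives $\ord(p+\alpha)\le\lfloor(p+\alpha)/a_1\rfloor\le\lfloor(f+u-\wt u)/a_1\rfloor$ (terms with $p+\alpha\notin S$ contribute $-1$ and are harmless), which bounds the maximum. Bound (3) then follows from $\rho=\max_u g(u)$ by inserting $u-\wt u\le a_1$ and using fact (iv) to rewrite $\lfloor(f+a_1)/a_1\rfloor=\lfloor f/a_1\rfloor+1=\lceil f/a_1\rceil$. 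For (2), use Proposition \ref{gAp} instead: $g(a_1)=\max\{\ord(w)\st w\in\Ap(S)\}$; a nonzero $w\in\Ap(S)$ has $w-a_1\notin S$, so no representation of $w$ uses $a_1$, and Lemma \ref{about ord}(2) with $j'=2$ gives $\ord(w)\le\lfloor w/a_2\rfloor\le\lfloor(f+a_1)/a_2\rfloor$ by fact (iii).

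For the lower bounds I would instead use the single point $p=f\in T$, so that $\sigma(\alpha)\ge\ord(f+\alpha)$ for every $\alpha$. Since $\alpha\ge1$ forces $f+\alpha\in S$, Lemma \ref{about ord}(1) with $j=\nu$ yields $\ord(f+\alpha)\ge\lceil(f+\alpha)/a_\nu\rceil$. Bound (6) is then immediate from $\tau=\min_{1\le\alpha\le a_1}\sigma(\alpha)$, and bound (4) follows by taking the minimum over $\alpha\in\A(u)$ and using $\alpha\ge u-\wt u$ from fact (i). The delicate point, and what I expect to be the main obstacle, is bound (5), where the denominator sharpens from $a_\nu$ to $a_{\nu-1}$. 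The key observation is that for $u=a_\nu$ and any $\alpha\in\A(a_\nu)$ the element $f+\alpha$ admits \emph{no} representation involving $a_\nu$: if it did, then $f+\alpha-a_\nu\in S$, and since $\alpha\in\A(a_\nu)$ gives $a_\nu-\alpha\in S$, we would obtain $f=(f+\alpha-a_\nu)+(a_\nu-\alpha)\in S$, contradicting $f\notin S$. Hence every maximal representation of $f+\alpha$ uses only $a_1,\dots,a_{\nu-1}$, so Lemma \ref{about ord}(1) with $j=\nu-1$ gives $\ord(f+\alpha)\ge\lceil(f+\alpha)/a_{\nu-1}\rceil$; minimizing over $\A(a_\nu)$ and using $\min\A(a_\nu)=a_\nu-\wt{a_\nu}$ yields (5). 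This avoidance argument is precisely what fails for a general $u$, which is why the improved bound is stated only for the top generator.
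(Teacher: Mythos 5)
Your proposal is correct and follows essentially the same route as the paper: both bound $g(u)$ via $\min_{\alpha\in\A(u)}\max_{p\in T}\ord(p+\alpha)$, take $\alpha=u-\wt u=\min\A(u)$ with $p\le f=\max T$ for the upper bounds, take $p=f$ for the lower bounds, and use the same observation that $f+\alpha-a_\nu\notin S$ for $\alpha\in\A(a_\nu)$ to sharpen the denominator to $a_{\nu-1}$ in (5). The only cosmetic differences are that you route (2) through Proposition \ref{gAp} rather than through $\A(a_1)=\{e\}$, and derive (6) directly from $\tau=\min_\alpha\sigma(\alpha)$ rather than from (4).
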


\begin{proof} Let $g(u) = \ord(p+\alpha)$ as in Remark \ref{mainr}. Notice that $f$ is the largest element of $T$ and $u-\wt u$ is the least element of $\A(u)$. For (1), we have

$$g(u) = \ord(p+\alpha) \stackrel{\ref{mainr}(2)}{\le} \ord(p'+(u-\wt u)) \stackrel{\ref{about ord}}{\le}  \floor{p'+u-\wt u} \stackrel{}{\le} \floor{f+u-\wt u}$$

\noindent For (2), $\A(a_1) = \{e\}$ so

$$g(a_1) = \ord(p+a_1) \stackrel{\ref{about ord}}{\le} \floor[a_2]{p+a_1} \le \floor[a_2]{f+a_1}$$
\noindent Since $1\le u-\wt u\le a_1$, $\sfloor{f+u-\wt u} \le \sceil{f}$ and (3) follows from (1).

Similarly for (4), we have

$$g(u) = \ord(p+\alpha) \stackrel{\ref{mainr}(1)}{\ge} \ord(f+\alpha) \stackrel{\ref{about ord}}{\ge} \ceil[a_\nu]{f+\alpha} \stackrel{}{\ge} \ceil[a_\nu]{f+u-\wt u}$$

\noindent For (5), notice that if $\alpha \in \A(a_\nu)$, then $f+\alpha \in \Ap(S,a_\nu)$. Suppose not, then $f+\alpha - a_\nu \in S$. However, $a_\nu -\alpha \in S$ and so $f = (f+\alpha - a_\nu) + (a_\nu -\alpha) \in S$ which is a contradiction. Now

$$g(a_\nu) = \ord(p+\alpha) \stackrel{\ref{mainr}(1)}{\ge} \ord(f+\alpha) \stackrel{\ref{about ord}}{\ge} \ceil[a_{\nu-1}]{f+\alpha} \stackrel{}{\ge} \ceil[a_{\nu-1}]{f+a_{\nu}-\wt{a_{\nu}}}$$

\noindent Lastly for (6), $1\le u-\wt u\le a_\nu$. So $\big\lceil\frac{f+u-\wt u}{a_\nu}\big\rceil \ge \sfloor[a_\nu]{f+a_\nu} = \sfloor[a_\nu]{f}+1 = \sceil[a_\nu]{f}$ and (6) follows from (4).
\end{proof}

Upper Bounds of this kind were given for the minimal generators of $S$ by \citet[Proposition 5.1, Proposition 5.3]{HS}.

\begin{rmk}\label{3.2} In fact we have $\sceil[a_\nu]{f} \le g(Q) \le \sceil{f}$ for {\em every} parameter ideal $Q$ of $R=k[[S]]$, not just the monomial parameter ideals. This follows from \citep[Theorem 4.1, Theorem 4.7]{HS} combined with Proposition \ref{bounds}(6). This upper bound is not always sharp \citep[Remark 4.9]{HS}. Likewise the example $S=<5,8,12>$ with Frobenius number 19 shows that this lower bound is also not always sharp since $\sceil[a_3]{f}=2$ and $\tau=3$.
\end{rmk}

\begin{rmk}\label{3.3} If $R = k[[x]]$, the Goto number of every parameter ideal is 0. This is the case for any regular local ring of dimension 1. If $R \ne k[[x]]$, then $a_\nu$ and $f$ are positive and hence $\sceil[a_\nu]{f} \ge 1$. So the Goto number of any parameter ideal in $R$ is always greater than or equal to 1. \citet[Theorem 2.2]{CP} showed that this holds for all Cohen-Macaulay local rings that are not regular. If $R$ is Gorenstein with multiplicity greater than 2, then $a_\nu < f$. So $\sceil[a_\nu]{f} \ge 2$ and the Goto number of any parameter ideal in $R$ is always greater than or equal to 2. This was also shown by \citet[Proposition 2.5]{GKM}, and more generally for local Gorenstein rings with positive dimension and multiplicity greater than 2 by \citet[Theorem 1.1]{GMT}.
\end{rmk}


\section{$M$-pure Semigroups and the Gorenstein Property}\label{prelim}

Given a partial ordering on $S$, we can consider the minimal and maximal elements of $\Ap(S)\setminus\{0\}$. For the partial ordering $\pre$ defined in the beginning of Section 2 (recall that $u \pre u'$ if $u+s = u'$ for some $s\in S$), these are denoted by $\minap(S)$ and $\maxap(S)$. In this section a different partial ordering is also used, namely $\pre_M$ where $u \pre_M u'$ if $u+s = u'$ and $\ord(u) + \ord(s) = \ord(u')$ for some $s\in S$. The corresponding sets are denoted by $\minap_M(S)$ and $\maxap_M(S)$.

A remark should be made about notation. Here $M = S\setminus \{0\}$ denotes the maximal ideal of $S$, see \citep{BDF} for more information about ideals of semigroups. $M$ is used in the notation because the $\ord(u) = k$ such that $u \in kM\setminus (k+1)M$, and hence this order function is induced by the $M$-adic filtration $S\supset M \supset 2M\supset \dots$. In general a partial ordering can be defined using an order function induced by any filtration on $S$.

\begin{ddef}\label{purity}\
\begin{enumerate}
\item $S$ is called {\em pure} if every element in $\maxap(S)$ has the same order.
\item $S$ is called {\em $M$-pure} if every element in $\maxap_M(S)$ has the same order.
\end{enumerate}
\end{ddef}

Proposition \ref{mpiffp=} states the connection between purity and $M$-purity. First we need two lemmas. Statement (2) of Lemma \ref{maxminchar} is also given by \citet[Theorem 7]{FGH}.

\begin{lem}\label{maxminchar} Let $\Sgen$ be a semigroup. Then
\begin{enumerate}
\item $w\in \minap(S)$ if and only if $w = a_i$ for some $2\le i\le \nu$.
\item $w\in \maxap(S)$ if and only if $w-a_1\in T$.
\end{enumerate}
\end{lem}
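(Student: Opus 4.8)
The plan is to unfold both statements directly from the definition $\Ap(S) = \{w \in S : w - a_1 \notin S\}$ (recall $e = a_1$) and the partial order $\preceq$, so that each part reduces to an explicit membership computation. A single preliminary observation drives everything: for $2 \le i \le \nu$ the generator $a_i$ lies in $\Ap(S)\setminus\{0\}$, because $a_i > a_1$ forces $a_i - a_1 > 0$, and if $a_i - a_1$ were in $S$ then $a_i = a_1 + (a_i - a_1)$ would express $a_i$ as a sum of two nonzero elements of $S$, contradicting the minimality of $a_i$ as a generator. By contrast $a_1 \notin \Ap(S)$, since $a_1 - a_1 = 0 \in S$. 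I would record this fact first and use it in both parts.

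For (1), I would prove both inclusions. For ``$\Leftarrow$'', take $w = a_i$ with $i \ge 2$; it lies in $\Ap(S)\setminus\{0\}$ by the observation, and if $w' \preceq a_i$ with $w' \in \Ap(S)\setminus\{0\}$, say $a_i = w' + s$ with $s \in S$, then $s \ne 0$ would again contradict minimality of the generator $a_i$, forcing $s = 0$ and $w' = a_i$; hence $a_i \in \minap(S)$. For ``$\Rightarrow$'', let $w \in \minap(S)$. Since $w \in \Ap(S)$, every representation of $w$ must have zero coefficient on $a_1$ (otherwise $w - a_1 \in S$), so $w$ is a sum of generators $a_j$ with $j \ge 2$, at least one of which appears. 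That $a_j$ satisfies $a_j \preceq w$ and $a_j \in \Ap(S)\setminus\{0\}$, so minimality of $w$ forces $w = a_j$.

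For (2), the main step is to rewrite maximality as a membership condition. Because $w + s \in S$ automatically and $w + s \ne 0$ for $0 \ne s \in S$, the statement ``$w \in \maxap(S)$'' is equivalent to: $w \in \Ap(S)\setminus\{0\}$ and, for every $0 \ne s \in S$, $w + s \notin \Ap(S)$, i.e. $(w + s) - a_1 \in S$. Unwinding $\Ap(S)$, this says exactly that $w \in S$, $w \ne 0$, $w - a_1 \notin S$, and $(w - a_1) + s \in S$ for all $0 \ne s \in S$ --- and the last two conditions are precisely the defining conditions for $w - a_1 \in T$, giving ``$\Rightarrow$'' at once. For ``$\Leftarrow$'', assume $w - a_1 \in T$; taking $s = a_1$ in the universal condition yields $w = (w - a_1) + a_1 \in S$, and $w - a_1 \notin S$ gives $w \in \Ap(S)$, while the universal condition is exactly maximality. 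The one point needing separate care is $w \ne 0$: if $w = 0$ then $-a_1 \in T$ would force $a_2 - a_1 = -a_1 + a_2 \in S$, contradicting $a_2 \in \Ap(S)$ from the preliminary observation (here $a_2$ exists since $\nu \ge 2$).

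I expect the main obstacle to be bookkeeping rather than depth: keeping straight that ``maximal in $\Ap(S)$'' translates into the membership statement $w + s - a_1 \in S$, and carefully disposing of the degenerate case $w = 0$ in the converse of (2). Both parts then follow from the single structural fact about the generators $a_i$ together with the definition of the Ap\'ery set.
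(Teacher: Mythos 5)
Your proposal is correct and follows essentially the same route as the paper: both parts are proved by directly unwinding the definitions of $\Ap(S)$, $\preceq$, and $T$, using the fact that the $a_i$ with $i\ge 2$ lie in $\Ap(S)\setminus\{0\}$ while $a_1$ does not. The only difference is cosmetic --- you spell out the converse of (2), including the degenerate case $w=0$, which the paper dismisses as ``clear.''
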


\begin{proof} (1) Let $w\in \minap(S)$ and suppose that $w \ne a_i$ for some $2\le i\le \nu$. Since $a_1 \not\in \Ap(S)$ and $0\not\in \minap(S)$, we also have that $w$ is nonzero and not equal to $a_1$. Thus there exists $0\ne u \in S$ with $u<w$ such that $u\pre w$. However, $u$ must be in $\Ap(S)$ contradicting that $w\in \minap(S)$. Conversely suppose that $w \not\in \minap(S)$. Then there exists $0\ne u \in \Ap(S)$ with $u<w$ such that $u\pre w$. Therefore $w$ cannot be a minimal generator.

(2) If $w\in \maxap(S)$, then $w-a_1 \not\in S$. However $w+u \not\in \Ap(S)$ for all $0\ne u\in S$ so that $w-a_1+u \in S$. Thus $w-a_1\in T$. The converse is clear.
\end{proof}

\begin{lem}\label{premaxrep} Let $\Sgen$ be a semigroup and $u' = \sum_{i=1}^{\nu} c_ia_i$ be a maximal representation of $u'\in S$. If $u = \sum_{i=1}^{\nu} d_ia_i$ with $0\le d_i\le c_i$ for all $1\le i\le \nu$, then $u \pre_M u'$.
\end{lem}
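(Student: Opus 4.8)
The plan is to exhibit the witness element $s$ directly from the two given representations and then check the order condition defining $\pre_M$ by squeezing $\ord(u)+\ord(s)$ between two inequalities. Recall that $u \pre_M u'$ means there is an $s\in S$ with $u+s=u'$ and $\ord(u)+\ord(s)=\ord(u')$. So first I would set $s=\sum_{i=1}^{\nu}(c_i-d_i)a_i$. Because $0\le d_i\le c_i$, every coefficient $c_i-d_i$ is a nonnegative integer, so this is a genuine representation of an element of $S$, whence $s\in S$ and $u+s=u'$ immediately. This reduces the whole lemma to verifying the single equation $\ord(u)+\ord(s)=\ord(u')$.

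For the inequality $\ord(u)+\ord(s)\le\ord(u')$, I would appeal to the general superadditivity of the order function: taking a maximal representation of $u$ and a maximal representation of $s$ and adding them termwise produces a representation of $u'=u+s$ whose coefficient-sum is exactly $\ord(u)+\ord(s)$, and $\ord(u')$ is by definition the maximum of all such coefficient-sums. For the reverse inequality, I would instead use the two representations we already have in hand: since $u=\sum d_i a_i$ and $s=\sum(c_i-d_i)a_i$ are particular (not necessarily maximal) representations, we get $\ord(u)\ge\sum d_i$ and $\ord(s)\ge\sum(c_i-d_i)$, and adding these gives $\ord(u)+\ord(s)\ge\sum c_i$. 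At this point the hypothesis that $u'=\sum c_i a_i$ is a \emph{maximal} representation enters decisively: it says $\sum c_i=\ord(u')$, so $\ord(u)+\ord(s)\ge\ord(u')$. Combining the two inequalities yields equality, and therefore $u\pre_M u'$.

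I do not expect a real obstacle here; the argument is a short two-sided estimate. The one point deserving attention is that superadditivity by itself only delivers $\ord(u)+\ord(s)\le\ord(u')$, which is the \emph{wrong} direction for additivity of order — and indeed order can genuinely fail to be additive, which is precisely the phenomenon that makes $\pre_M$ a strictly coarser relation than $\pre$. Thus all the content of the lemma sits in the reverse inequality, and the maximality of the chosen representation of $u'$ is exactly the hypothesis that forces it. Conceptually, the lemma records that whenever one element is built from a coordinatewise ``sub-representation'' of a maximal representation of a larger element, the order adds up on the nose.
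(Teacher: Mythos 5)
Your proof is correct and is essentially identical to the paper's: both take $s=\sum_i(c_i-d_i)a_i$, get $\ord(u)+\ord(s)\ge\sum_i d_i+\sum_i(c_i-d_i)=\sum_i c_i=\ord(u')$ from maximality of the representation of $u'$, and combine this with the general superadditivity bound $\ord(u)+\ord(s)\le\ord(u')$. (In fact your write-up is cleaner; the paper's displayed sum contains a small typo, writing $\sum_i c_i+\sum_i(c_i-d_i)$ where $\sum_i d_i+\sum_i(c_i-d_i)$ is meant.)
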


\begin{proof} Let $s = \sum_{i=1}^{\nu}(c_i-d_i)a_i$, then $s\in S$, $u+s = u'$. Now we have $\ord(u) + \ord(s) \ge  \sum_{i=1}^{\nu} c_i + \sum_{i=1}^{\nu} (c_i-d_i) = \ord(u')$. However we always have $\ord(u) + \ord(s)\le \ord(u')$.
\end{proof}

\begin{prop}\label{mpiffp=} Let $\Sgen$ be a semigroup. Then
\begin{enumerate}
\item $\minap(S) = \minap_M(S)$
\item $\maxap(S) \subset \maxap_M(S)$
\item $S$ is $M$-pure if and only if $S$ is pure and $\maxap(S) = \maxap_M(S)$
\end{enumerate}
\end{prop}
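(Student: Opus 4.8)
The plan rests on one structural observation: the relation $\pre_M$ refines $\pre$, meaning that $u \pre_M u'$ always implies $u \pre u'$, since the order condition is an extra requirement imposed on top of $u + s = u'$. Thus every $\pre_M$-comparable pair is $\pre$-comparable, but not conversely. I would combine this with two facts already available: $\Ap(S)$ is downward closed under $\pre$ (as noted in Section 2), and the order function is superadditive, $\ord(a+b) \ge \ord(a) + \ord(b)$ for $a,b \in S$, which follows by concatenating maximal representations.

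For (1), one inclusion is immediate from the refinement: if $w$ is $\pre$-minimal in $\Ap(S)\setminus\{0\}$, then a fortiori no element lies $\pre_M$-below it, so $\minap(S) \subseteq \minap_M(S)$. For the reverse I would take $w \in \minap_M(S)$ and argue by contradiction that $w$ is not a minimal generator; by Lemma \ref{maxminchar}(1) this is the only way $w$ can fail $\pre$-minimality, and it forces $\ord(w) \ge 2$, so a maximal representation of $w$ uses at least two generators. Subtracting one generator $a_j$ and applying Lemma \ref{premaxrep} to the resulting sub-representation produces $u = w - a_j$ with $u \pre_M w$, $u \ne w$, and $u \in \Ap(S)\setminus\{0\}$ (downward closure places $u$ in the Ap\'ery set, and $\ord(w)\ge 2$ keeps it nonzero), contradicting $\pre_M$-minimality. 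For (2) the refinement runs the other way: if $w \in \maxap(S)$ and some $u \in \Ap(S)\setminus\{0\}$ satisfied $w \pre_M u$ with $u \ne w$, then $w \pre u$ with $w \ne u$, contradicting $\pre$-maximality, so $w \in \maxap_M(S)$.

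The substance is the forward direction of (3); the backward direction is immediate, since $\maxap(S) = \maxap_M(S)$ together with purity says the common set has constant order, which is exactly $M$-purity. Assume $S$ is $M$-pure. By (2) it suffices to prove $\maxap_M(S) \subseteq \maxap(S)$, so suppose not and pick $w \in \maxap_M(S)\setminus\maxap(S)$. Since $w$ is not $\pre$-maximal, I would climb the finite poset $(\Ap(S)\setminus\{0\},\pre)$ to a $\pre$-maximal element $w^*$ with $w \pre w^*$ and $w \ne w^*$. By (2), $w^* \in \maxap(S) \subseteq \maxap_M(S)$, so both $w$ and $w^*$ lie in $\maxap_M(S)$ and $M$-purity forces $\ord(w) = \ord(w^*)$. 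But $w^* = w + t$ for some nonzero $t \in S$, and superadditivity gives $\ord(w^*) \ge \ord(w) + \ord(t) \ge \ord(w) + 1$, a contradiction. Hence $\maxap_M(S) = \maxap(S)$, and purity follows since every element of this common set then carries the single order value supplied by $M$-purity. The main obstacle is exactly this climbing argument: one must pass from the misbehaving element $w$ up to a genuine $\pre$-maximal element, reinsert it into $\maxap_M(S)$ via (2), and then play superadditivity of $\ord$ against the constancy of order to reach the contradiction.
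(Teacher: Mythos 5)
Your proposal is correct and follows essentially the same route as the paper: parts (1) and (2) rest on the observation that $\pre_M$ refines $\pre$, combined with Lemmas \ref{maxminchar} and \ref{premaxrep}, and part (3) combines (2) with the superadditivity of $\ord$. The only cosmetic difference is in the forward direction of (3), where you climb from a putative element of $\maxap_M(S)\setminus\maxap(S)$ to a $\pre$-maximal element above it, while the paper instead notes directly that $\maxap(S)$ contains every element of $\Ap(S)$ of largest order; both reach the same contradiction.
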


\begin{proof} For (1), if $w \not\in \minap_M(S)$, then $w_i+w_j = w$ for some $w_i,w_j \in \Ap(S)\setminus\{0\}$. Hence $w \not\in \minap(S)$. Now let $w \in \minap_M(S)$ and suppose that $\ord(w) \ge 2$. Then there is a maximal representation $w = \sum_{i=1}^{\nu} c_ia_i$ such that $\sum_{i=1}^{\nu} c_i \ge 2$. So there exists $w_i \in \Ap(S)$ such that $w_i < w$ and $w_i = \sum d_ia_i$ with $0\le d_i\le c_i$ for all $1\le i\le \nu$. By Lemma \ref{premaxrep}, $w_i \pre_M w$ which is a contradiction. Therefore $\ord(w) = 1$ and $w$ is a minimal generator of $S$. By Lemma \ref{maxminchar}, $w\in \minap(S)$.

For (2), if $w \not\in \maxap_M(S)$, then $w_i+w = w_j$ for some $w_i,w_j \in \Ap(S)\setminus\{0\}$. Hence $w \not\in \maxap(S)$.

To see (3), first assume $S$ is $M$-pure. Since $\maxap(S) \subset \maxap_M(S)$, $S$ is pure. Moreover $\maxap(S)$ contains every element of $\Ap(S)$ with the largest order among the elements in $\Ap(S)$. Again since $\maxap(S) \subset \maxap_M(S)$, every element of $\maxap_M(S)$ has this largest order and is thus in $\maxap(S)$. The reverse implication is trivial.
\end{proof}

\begin{ex}\label{569} Let $S=<5,6,9>$. Then $\Ap(S) = \{0,6,9,12,18\}$, $\minap(S)=\{6,9\}$, $\maxap(S)=\{18\}$, and $\maxap_M(S)=\{9,18\}$. So $S$ is pure, but not $M$-pure.
\end{ex}

Notice, as in the example, that every symmetric semigroup is pure, but not necessarily $M$-pure. The next proposition is a collection of well-known equivalent formulations of symmetry, and proofs are provided for completeness. In Proposition \ref{mpsym} an analogous result is given for $M$-pure symmetry.

\begin{prop}\label{psym} Let $\Sgen$ be a semigroup with Frobenius number $f$ and $\Ap(S) =\{w_0,w_1,\dots,w_{e-1}\}$. Then the following are equivalent.
\begin{enumerate}
\item $S$ is (pure) symmetric
\item $\maxap(S) = \{f+a_1\}$
\item $\#\maxap(S) = 1$
\item $w_i + w_j = w_{e-1}$ whenever $i+j = e-1$
\item $w_i \pre w_{e-1}$ for $0\le i \le e-1$
\end{enumerate}
\end{prop}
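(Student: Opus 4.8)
The plan is to prove the five statements equivalent in three linked groups, relying on two facts recorded earlier: that $S$ is symmetric if and only if $T=\{f\}$, and that, by Lemma~\ref{maxminchar}(2), $w\in\maxap(S)$ precisely when $w-a_1\in T$. I will also use the identity $w_{e-1}=f+a_1$, which holds because $f+a_1\in\Ap(S)$ (as $f+a_1\in S$ while $f\notin S$) and because $w-a_1\notin S$ forces $w-a_1\le f$, hence $w\le f+a_1$, for every $w\in\Ap(S)$. Finally I will use that $f\in T$ always, since $f\notin S$ but $f+u>f$ lies in $S$ for every $0\ne u\in S$.

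For the first group, (1)$\Leftrightarrow$(2)$\Leftrightarrow$(3), the key point is that $w\mapsto w-a_1$ is a bijection from $\maxap(S)$ onto $T$ by Lemma~\ref{maxminchar}(2). If $S$ is symmetric then $T=\{f\}$, so this bijection forces $\maxap(S)=\{f+a_1\}$, giving (2); and (2)$\Rightarrow$(3) is immediate. Conversely, from (3) the bijection yields $\#T=\#\maxap(S)=1$, and since $f\in T$ we get $T=\{f\}$, i.e.\ (1). The second group, (3)$\Leftrightarrow$(5), rests on the fact that $w_{e-1}$, being the largest element by value, admits nothing strictly $\pre$-above it and so is always $\pre$-maximal and nonzero. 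If $\#\maxap(S)=1$ then $w_{e-1}$ is the unique maximal element, and in a finite poset every element lies below some maximal element, whence $w_i\pre w_{e-1}$ for all $i$, which is (5). Conversely, under (5) any $w\in\maxap(S)$ satisfies $w\pre w_{e-1}$ and therefore equals $w_{e-1}$ by maximality, giving $\#\maxap(S)=1$.

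It remains to treat (4)$\Leftrightarrow$(5). The implication (4)$\Rightarrow$(5) is immediate: $w_i+w_{e-1-i}=w_{e-1}$ with $w_{e-1-i}\in S$ says exactly $w_i\pre w_{e-1}$. The reverse implication is where the real work lies, and I expect it to be the main obstacle. The plan is to introduce the involution $\phi(w)=w_{e-1}-w$ and to show it maps $\Ap(S)$ into itself. Given $w_i$, condition (5) provides $s\in S$ with $w_i+s=w_{e-1}$, so $\phi(w_i)=s\in S$; and $\phi(w_i)-a_1=w_{e-1}-a_1-w_i=f-w_i\notin S$, because otherwise $f=(f-w_i)+w_i\in S$, a contradiction. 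Hence $\phi(w_i)\in\Ap(S)$. Since $\phi$ is an order-reversing involution of the chain $w_0<\dots<w_{e-1}$, it must send $w_i$ to $w_{e-1-i}$, and $w_i+w_{e-1-i}=w_{e-1}$ is precisely (4).

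The delicate bookkeeping is confined to this last step: one must confirm that $\phi$ really lands in $\Ap(S)$ for \emph{every} $w_i$ (both the membership $\phi(w_i)\in S$, supplied by (5), and the condition $\phi(w_i)-a_1\notin S$), and then argue that being an order-reversing bijection of a finite chain forces $\phi(w_i)=w_{e-1-i}$. Everything else is formal manipulation of the Ap\'ery set together with the bijection between $\maxap(S)$ and $T$, so once the involution argument is in place the cycle of equivalences closes.
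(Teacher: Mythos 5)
Your decomposition into the groups (1)$\Leftrightarrow$(2)$\Leftrightarrow$(3), (3)$\Leftrightarrow$(5), and (4)$\Leftrightarrow$(5) is sound in outline, and two of the three groups are handled correctly: the finite-poset argument for (3)$\Leftrightarrow$(5) is valid, and the involution $\phi(w)=w_{e-1}-w$ for (5)$\Rightarrow$(4) is essentially the paper's own (3)$\Rightarrow$(4) step (the paper likewise observes that $w\mapsto w_{e-1}-w$ is an order-reversing bijection of $\Ap(S)$). Your identification $w_{e-1}=f+a_1$ and the check that $\phi(w_i)-a_1=f-w_i\notin S$ are both correct.

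The gap is in the first group, specifically in the only route your argument provides back to condition (1). You derive (3)$\Rightarrow$(1) from the implication ``$T=\{f\}$ implies $S$ symmetric.'' That implication is never proved in the paper: the sentence before Proposition \ref{symgv} merely asserts the equivalence as known, and only its easy direction is ever used there. Worse, by Lemma \ref{maxminchar}(2) the statement $T=\{f\}$ is literally the statement $\maxap(S)=\{f+a_1\}$, i.e.\ condition (2); so invoking ``$T=\{f\}$ iff symmetric'' to get (3)$\Rightarrow$(1) assumes exactly the nontrivial implication (2)$\Rightarrow$(1) that the proposition is supposed to establish. (The direction you use for (1)$\Rightarrow$(2) is harmless, since it has a one-line proof: if $z\in T$ and $z\ne f$, then $f-z\in S\setminus\{0\}$ by symmetry, whence $f=z+(f-z)\in S$ by the definition of $T$, a contradiction.) To close the cycle you must actually prove a return implication. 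Either supply the maximality argument for ``$T=\{f\}$ implies symmetric'' (if $x\notin S$ and $f-x\notin S$, take $z$ maximal among integers with both $z\notin S$ and $f-z\notin S$ and verify $z\in T$ with $z\ne f$), or do what the paper does and prove (5)$\Rightarrow$(1) directly: given $x+y=f$ with $x\notin S$, choose $k>0$ minimal with $x+ka_1\in S$, so that $x+ka_1\in\Ap(S)$; then $(x+ka_1)+(y-(k-1)a_1)=f+a_1=w_{e-1}$, and (5) forces $y-(k-1)a_1\in S$, hence $y\in S$ since $k-1\ge 0$.
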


\begin{proof} (1) $\Rightarrow$ (2) Let $w \in \maxap(S)$. Then $(w-a_1) + y =f$ for some $y\in S$ and by Lemma \ref{maxminchar}, $w-a_1 \in T$. Thus $y=0$ and $w = f+a_1$.

(2) $\Rightarrow$ (3) Clear.

(3) $\Rightarrow$ (4) $\maxap(S) =\{w_{e-1}\}$, and so $w_{e-1}-w\in \Ap(S)$ if and only if $w\in \Ap(S)$. Therefore $\Ap(S) = \{w_{e-1} - w_{e-1}, w_{e-1} - w_{e-2},\dots,w_{e-1} - w_{0}\}$. Now we can see that $w_i = w_{e-1} - w_{e-1-i}$.

(4) $\Rightarrow$ (5) Clear.

(5) $\Rightarrow$ (1) Let $x+y = f$ for $x,y\in \Z$ and suppose that $x \not\in S$. Choose $k>0$ such that $x+ka_1 \in \Ap(S)$. Then $(x+ka_1) +(y-(k-1)a_1) = f+a_1 = w_{e-1}$ and $(y-(k-1)a_1)\in S$. Since $k-1\ge0$, $y \in S$.
\end{proof}

\begin{prop}\label{mpsym} Let $\Sgen$ be a semigroup with Frobenius number $f$ and $\Ap(S) =\{w_0,w_1,\dots,w_{e-1}\}$. Then the following are equivalent.
\begin{enumerate}
\item $S$ is $M$-pure symmetric
\item $\maxap_M(S) = \{f+a_1\}$
\item $\#\maxap_M(S) = 1$
\item $w_i + w_j = w_{e-1}$ and $\ord(w_i) + \ord(w_j) = \ord(w_{e-1})$ whenever $i+j = e-1$
\item $w_i \pre_M w_{e-1}$ for $0\le i \le e-1$.
\end{enumerate}
\end{prop}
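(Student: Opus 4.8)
The plan is to run the cycle of implications $(1)\Rightarrow(2)\Rightarrow(3)\Rightarrow(4)\Rightarrow(5)\Rightarrow(1)$, mirroring the proof of Proposition \ref{psym} but carrying the order data along at every step. Two facts organize everything. First, $w_{e-1}=f+a_1$ is \emph{always} the largest element of $\Ap(S)$: since $f$ is the largest gap, $f+a_1-a_1=f\notin S$ forces $f+a_1\in\Ap(S)$, while any $w>f+a_1$ would satisfy $w-a_1>f$, hence $w-a_1\in S$, so $w\notin\Ap(S)$. Second, the map $\phi(w)=w_{e-1}-w$ should turn into a size-reversing involution of $\Ap(S)$ that is additive on orders; producing this involution is the crux.

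For $(1)\Rightarrow(2)$ I would use symmetry to get $\maxap(S)=\{w_{e-1}\}$ via Proposition \ref{psym}, and then invoke the observation made inside the proof of Proposition \ref{mpiffp=}(3) that $\maxap(S)$ contains every element of $\Ap(S)$ of maximal order (if $w$ has top order and $w\pre w'$ with $w'\ne w$, superadditivity of $\ord$ gives $\ord(w')>\ord(w)$, a contradiction). Hence $w_{e-1}$ is the \emph{unique} element of maximal order. Since $\maxap(S)\subset\maxap_M(S)$ by Proposition \ref{mpiffp=}(2), we have $w_{e-1}\in\maxap_M(S)$, and $M$-purity forces every element of $\maxap_M(S)$ to have order $\ord(w_{e-1})$; as only $w_{e-1}$ attains this order, $\maxap_M(S)=\{w_{e-1}\}=\{f+a_1\}$. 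The step $(2)\Rightarrow(3)$ is immediate.

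The main work is $(3)\Rightarrow(4)$. From $\#\maxap_M(S)=1$, every element of the finite poset $(\Ap(S)\setminus\{0\},\pre_M)$ lies $\pre_M$-below the unique maximal element, and that element must be $w_{e-1}$ because $w_{e-1}$ is the largest element (anything strictly above it in $\pre_M$ would exceed it in size). Thus every $w\in\Ap(S)$ satisfies $w\pre_M w_{e-1}$, which unpacks to $w_{e-1}-w\in S$ together with $\ord(w)+\ord(w_{e-1}-w)=\ord(w_{e-1})$. Since $w_{e-1}-w\pre w_{e-1}$, it lies in $\Ap(S)$, so $\phi(w)=w_{e-1}-w$ maps $\Ap(S)$ to itself; being an involution it is bijective and strictly size-reversing, forcing $\phi(w_i)=w_{e-1-i}$. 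Setting $s=w_{e-1-i}$ in the relation $w_i\pre_M w_{e-1}$ then yields both $w_i+w_{e-1-i}=w_{e-1}$ and $\ord(w_i)+\ord(w_{e-1-i})=\ord(w_{e-1})$, which is exactly $(4)$.

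Finally, $(4)\Rightarrow(5)$ is a direct reading: for $j=e-1-i$, condition $(4)$ says $w_i+w_j=w_{e-1}$ with $\ord(w_i)+\ord(w_j)=\ord(w_{e-1})$, i.e. $w_i\pre_M w_{e-1}$. For $(5)\Rightarrow(1)$, since $u\pre_M u'$ implies $u\pre u'$, hypothesis $(5)$ gives $w_i\pre w_{e-1}$ for all $i$, which is condition (5) of Proposition \ref{psym}, so $S$ is symmetric; moreover for $i<e-1$ the relation $w_i\pre_M w_{e-1}$ with $w_i\ne w_{e-1}$ shows $w_i\notin\maxap_M(S)$, so $\maxap_M(S)=\{w_{e-1}\}$ is a singleton and $S$ is trivially $M$-pure. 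I expect the only genuine obstacle to be verifying that $\phi$ returns into $\Ap(S)$ and is an involution in step $(3)\Rightarrow(4)$; the rest is bookkeeping built on the single observation that $w_{e-1}$ is at once the largest element of $\Ap(S)$ and, under these hypotheses, its unique element of top order.
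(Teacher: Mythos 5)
Your proof is correct and follows essentially the same route as the paper: the cycle $(1)\Rightarrow(2)\Rightarrow(3)\Rightarrow(4)\Rightarrow(5)\Rightarrow(1)$ built on Proposition \ref{psym}, Proposition \ref{mpiffp=}, and the fact that $w_{e-1}=f+a_1$ is the unique $\pre_M$-maximal element. Your treatment of $(3)\Rightarrow(4)$ via the explicit involution $w\mapsto w_{e-1}-w$ simply fills in the step the paper leaves terse, so there is nothing to correct.
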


\begin{proof} (1) $\Rightarrow$ (2) $\maxap_M(S) = \maxap(S)$ since $S$ is $M$-pure, and $\maxap(S)=\{f+a_1\}$ since $S$ is symmetric.

(2) $\Rightarrow$ (3) Clear.

(3) $\Rightarrow$ (4) Since $\emptyset \ne \maxap(S) \subset \maxap_M(S)$, $\#\maxap(S) = 1$. By Proposition \ref{psym} $w_i + w_j = w_{e-1}$ whenever $i+j = e-1$ and $\maxap_M(S) =\maxap(S) =\{f+a_1\}$. But $f+a_1 = w_{e-1}$, so $\ord(w_i) + \ord(w_j) = \ord(w_{e-1})$.

(4) $\Rightarrow$ (5) Clear.

(5) $\Rightarrow$ (1) Since $w_i \pre_M w_{e-1}$ implies that $w_i \pre w_{e-1}$, by Proposition \ref{psym} $S$ is symmetric. Moreover, clearly $\maxap_M(S) = \{w_{e-1}\}$, and so $S$ is $M$-pure.
\end{proof}

Let $S$ be an $M$-pure symmetric semigroup, $g=g(a_1)$, and $\beta_i = \#\{w\in \Ap(S)\st \ord(w)=i\}$. Notice we have $\{w\in \Ap(S) \st \ord(w)=i\} = \{w_{e-1} - w\st w\in \Ap(S)$ and $\ord(w) = g-i\}$. Thus $\beta_i = \beta_{g-i}$ for $0\le i\le \floor[2]{g+1}$. This is in fact a sufficient condition for $M$-purity when $S$ is symmetric as shown in the next proposition. Proposition \ref{beta stuff} is a key result for showing that $S$ is  $M$-pure symmetric if and only if the associated graded ring $\gr_{\bar m}{\bar R}$, where $\bar R = R/x^{a_1}R$ and $\bar m = m/x^{a_1}R$, is Gorenstein.

\begin{prop}\label{beta stuff} Let $S$ be a symmetric semigroup. Then we have the following.
\begin{enumerate}
\item $\sum_{j=0}^i \beta_j \ge \sum_{j=0}^i \beta_{g-j}$ for $0\le i\le g$
\item The following are equivalent
\begin{enumerate}
\item $\sum_{j=0}^i \beta_j = \sum_{j=0}^i \beta_{g-j}$ for $0\le i\le g$
\item $\beta_i = \beta_{g-i}$ for $0\le i\le \floor[2]{g+1}$
\item $S$ is $M$-pure
\end{enumerate}
\end{enumerate}
\end{prop}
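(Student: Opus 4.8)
The plan is to organize everything around the involution $\phi : \Ap(S) \to \Ap(S)$ given by $\phi(w) = w_{e-1} - w$. Since $S$ is symmetric, Proposition \ref{psym}(5) gives $w \pre w_{e-1}$ for every $w \in \Ap(S)$, so $w_{e-1} - w \in S$; and because $w_{e-1} - w \pre w_{e-1}$, the downward-closure of Apéry sets noted in Section 2 forces $w_{e-1} - w \in \Ap(S)$. Thus $\phi$ is a well-defined involution, hence a bijection, on $\Ap(S)$. Since orders are superadditive, $\ord(a) + \ord(b) \le \ord(a+b)$, and $\ord(w_{e-1}) = g$ by Proposition \ref{gAp} (using $w \pre w_{e-1}$ again), every $w$ satisfies
$\ord(w) + \ord(\phi(w)) \le g$. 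By Proposition \ref{mpsym}(4) the semigroup $S$ is $M$-pure exactly when this inequality is an equality for all $w \in \Ap(S)$, so the entire proposition reduces to analyzing when the superadditivity bound is sharp.

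For part (1), I would rewrite the cumulative sums as cardinalities: $\sum_{j=0}^i \beta_j = \#\{w : \ord(w) \le i\}$ and $\sum_{j=0}^i \beta_{g-j} = \#\{w : \ord(w) \ge g-i\}$. The bound $\ord(w) + \ord(\phi(w)) \le g$ shows that $\phi$ carries $\{w : \ord(w) \ge g-i\}$ into $\{w : \ord(w) \le i\}$; as $\phi$ is injective, the claimed inequality of cumulative sums follows immediately.

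For part (2), I would first dispatch (a)$\,\Leftrightarrow\,$(b) by telescoping: subtracting the identity in (a) at index $i-1$ from that at index $i$ yields $\beta_i = \beta_{g-i}$, and conversely termwise equality of the summands gives (a), noting that the relation $\beta_i = \beta_{g-i}$ is symmetric so the range $0 \le i \le \floor[2]{g+1}$ in (b) already pins down all values. The implication (c)$\,\Rightarrow\,$(b) is essentially the remark preceding the proposition: $M$-purity makes $\ord(w) + \ord(\phi(w)) = g$ hold identically (Proposition \ref{mpsym}(4)), so $\phi$ restricts to a bijection between $\{w : \ord(w) = i\}$ and $\{w : \ord(w) = g-i\}$, whence $\beta_i = \beta_{g-i}$.

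The remaining implication (a)$\,\Rightarrow\,$(c) is where the real work lies, and I expect it to be the main obstacle. Equality of the cumulative sums upgrades the injections of part (1) to bijections: for each $i$, the map $\phi$ sends $\{w : \ord(w) \ge g-i\}$ \emph{onto} $\{w : \ord(w) \le i\}$. Fixing $w$ with $\ord(w) = a < g$, I would apply this bijection at the threshold $i = g - a - 1$: since $w$ has order $a < a+1$ it lies outside $\{w : \ord(w) \ge a+1\}$, so (because $\phi$ is a global bijection) $\phi(w)$ lies outside $\{w : \ord(w) \le g-a-1\}$, forcing $\ord(\phi(w)) \ge g-a$. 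Combined with $\ord(\phi(w)) \le g-a$ this gives $\ord(w) + \ord(\phi(w)) = g$, the extremal case $a = g$ being immediate since then $\ord(\phi(w)) \le 0$ forces $\phi(w) = 0$. By Proposition \ref{mpsym}(4) this identity is exactly $M$-purity. The delicate point, and the reason a single cardinality count does not suffice, is that one must extract \emph{pointwise} equality in the superadditivity bound from the \emph{global} counting identity; this is precisely what the comparison of two consecutive thresholds accomplishes.
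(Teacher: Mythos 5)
Your proof is correct and follows essentially the same route as the paper: both arguments rest on the involution $w\mapsto w_{e-1}-w$ of $\Ap(S)$ (well-defined by Proposition \ref{psym}), the superadditivity bound $\ord(w)+\ord(w_{e-1}-w)\le \ord(w_{e-1})=g$, and a cardinality comparison of $\{w: \ord(w)\ge g-i\}$ with $\{w:\ord(w)\le i\}$. The only cosmetic difference is in (a)$\Rightarrow$(c): you extract the pointwise equality $\ord(w)+\ord(w_{e-1}-w)=g$ by passing to complements of the resulting bijection at the threshold $i=g-\ord(w)-1$, whereas the paper introduces an intermediate set $B_i$ and verifies membership $w_i\in B_{\ord(w_j)}=A_{\ord(w_j)}$ directly; both steps are sound and carry the same content.
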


\begin{proof} Let
\begin{eqnarray*}
A_i &=& \{w\in \Ap(S) \st \ord(w) \ge g-i\}\\
B_i &=& \{w\in \Ap(S) \st w+w'\not\in\Ap(S) \text{ if } w'\in\Ap(S) \text{ and } \ord(w')>i\}\\
C_i &=& \{w_{e-1} - w' \st w'\in\Ap(S) \text{ and } \ord(w') \le i\}
\end{eqnarray*}

By Proposition \ref{psym}, $C_i$ is a subset of $\Ap(S)$. We always have $A_i \subset B_i = C_i$. Indeed let $w \in A_i$ and $w' \in \Ap(S)$ with $\ord(w')>i$. Then $\ord(w+w') \ge \ord(w) + \ord(w') > g-i + i = g$. Thus $w + w' \not\in \Ap(S)$ and $w \in B_i$. Now let $w\in B_i$. By Proposition \ref{psym} there exists a $w'\in \Ap(S)$ such that $w = w_{e-1} - w'$, and since $w\in B_i$ we have $\ord(w') \le i$. Therefore $w \in C_i$. Lastly let $w\in C_i$. That is $w+w' = w_{e-1}$ where $w'\in \Ap(S)$ and $\ord(w') \le i$. Suppose there exists $w''\in \Ap(S)$ with $\ord(w'')> i$ such that $w+w''\in \Ap(S)$. Then by Proposition \ref{psym}, $w''\pre w'$, which is a contradiction. So $w\in B_i$.

Now $\sum_{j=0}^i \beta_{g-j} = \#A_i \le \#B_i = \#C_i = \#\{w\in \Ap(S)\st \ord(w) \le i\} = \sum_{j=0}^i \beta_j$. This proves (1). The implications $(2a) \iff (2b) \Leftarrow (2c)$ are clear. It remains to show that $(2a) \Rightarrow (2c)$.

Assume that we have $\sum_{j=0}^i \beta_j = \sum_{j=0}^i \beta_{g-j}$ for $0\le i\le g$. Then $A_i = B_i$ for all $i$. Let $w_i + w_j = w_{e-1}$ and note that if $w\in \Ap(S)$ with $\ord(w) > \ord(w_j)$, then $w_i + w\not\in \Ap(S)$. Thus $w_i \in B_{\ord(w_j)} = A_{\ord(w_j)}$ and $\ord(w_i) \ge g-\ord(w_j)$. Since we always have $\ord(w_i) \le g-\ord(w_j)$, it follows that $\ord(w_i) + \ord(w_j) = \ord(w_{e-1})$ and $S$ is $M$-pure by Proposition \ref{mpsym}.
\end{proof}

\begin{rmk}
It is necessary to assume that $S$ is symmetric. If $S$ is not symmetric, then $M$-purity implies that $\beta_0 < \beta_g$. On the other hand consider the semigroup $S=<4,5,11>$. Then $\Ap(S) = \{0,5,10,11\}$ and $\maxap_M(S) = \{10,11\}$. Since $\ord(10)=2$ and $\ord(11)=1$, $S$ is not $M$-pure. However, $\beta_0=1$, $\beta_1=2$, and $\beta_2=1$.
\end{rmk}

Just as $S$ being symmetric is equivalent to $R$ being Gorenstein \citep{K}, again an analogous statement holds for $M$-pure symmetric. We consider this now. More generally, it is determined how the $M$-purity of $S$ is reflected in the corresponding numerical semigroup ring $R=k[[S]]$. First some results about Goto numbers and indices of nilpotency are needed.

As noted by \citet[Remark 3.3]{HS} sometimes the Goto number of a parameter ideal in a Noetherian local ring is equal to the index of nilpotency of the maximal ideal with respect to that parameter ideal. The next lemma states that this is the case precisely when $Q$ is a reduction of the maximal ideal.

\begin{lem}\label{g=s} Let $(A,m)$ be a Noetherian local ring with parameter ideal $Q$. Also let $s=s_Q(m) = \min\{k \st m^{k+1} \subset Q\}$ be the index of nilpotency of $m$ with respect to $Q$ and $g=g(Q)$ be the Goto number of $Q$. Then $g\le s$ with equality holding if and only if $Q$ is a reduction of $m$.
\end{lem}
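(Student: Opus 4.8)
The plan is to reduce the whole statement to two clean observations: that the integral closure of $m$ is $m$ itself, and that the colon ideal $Q:m^s$ collapses to exactly $m$. Writing $\overline{Q}$ for the integral closure of an ideal, I would first note that $\overline{m}\subseteq\sqrt{m}=m$ (an element integral over $m$ has a power lying in $m$, and $m$ is prime), while $m\subseteq\overline{m}$ holds trivially; hence $\overline{m}=m$. Since $Q\subseteq m$, monotonicity of integral closure gives $\overline{Q}\subseteq\overline{m}=m$. Consequently the condition ``$Q$ is a reduction of $m$'' --- equivalently, $m$ is integral over $Q$, i.e.\ $m\subseteq\overline{Q}$ --- is the same as the single equality $\overline{Q}=m$. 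This reformulation is what I will match against the Goto number. Note also that since $Q$ is a parameter ideal, $A/Q$ is Artinian, so $s$ is finite and well defined.

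Next I would establish the inequality $g\le s$. Because $m^{s+1}\subseteq Q$, every $a\in A$ satisfies $am^{s+1}\subseteq aQ\subseteq Q$, so $Q:m^{s+1}=A$. But $A$ is not integral over $Q$: were it so, then $1\in\overline{Q}\subseteq m$, which is absurd. Since the colon ideals form an increasing chain $Q=Q:m^0\subseteq Q:m\subseteq Q:m^2\subseteq\cdots$ (if $ym^b\subseteq Q$ and $b\le a$, then $ym^a=ym^bm^{a-b}\subseteq Q$), the property of being integral over $Q$ is inherited by all smaller exponents. Thus the failure of $Q:m^{s+1}$ to be integral over $Q$ forces $g\le s$.

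The heart of the argument is the computation $Q:m^s=m$ for $s\ge1$. Here $m\cdot m^s=m^{s+1}\subseteq Q$ gives $m\subseteq Q:m^s$, while the minimality of $s$ gives $m^s\not\subseteq Q$, so $Q:m^s\ne A$; since $m$ is the maximal ideal, this pins down $Q:m^s=m$ exactly. Therefore $Q:m^s$ is integral over $Q$ if and only if $m\subseteq\overline{Q}$, which by the first paragraph is precisely the condition that $Q$ be a reduction of $m$. Combined with $g\le s$, this yields $g=s$ if and only if $Q$ is a reduction of $m$. Finally I would dispose of the degenerate case $s=0$, where $m=m^1\subseteq Q\subseteq m$ forces $Q=m$; then $Q$ is trivially a reduction, $Q:m^0=Q=m\subseteq\overline{Q}$, and $g=0=s$, consistent with the equivalence.

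I expect the only genuine subtlety --- the ``hard part'' --- to be the clean identification $Q:m^s=m$ together with the verification $\overline{m}=m$; once these two facts are in hand the equivalence is immediate. The remaining care goes into justifying that integrality of $Q:m^q$ over $Q$ is monotone in $q$ (so that the Goto number is genuinely a threshold) and into handling the edge case $s=0$ separately, so that no hidden use of $s\ge1$ is made.
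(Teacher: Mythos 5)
Your proof is correct and follows essentially the same route as the paper's: $Q:m^{s+1}=A$ gives $g\le s$, and the identification $Q:m^{s}=m$ (via $m^{s+1}\subseteq Q$ and maximality of $m$) reduces the equality case to whether $m\subseteq\overline{Q}$, i.e.\ whether $Q$ is a reduction of $m$. The extra care you take --- checking monotonicity of the colon ideals so that $g$ is a genuine threshold, verifying $\overline{m}=m$, and handling $s=0$ --- is all sound and merely makes explicit what the paper leaves implicit.
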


\begin{proof} Since $Q:m^{s+1} = A$, by Definition \ref{goto def}, $g\le s$. If $g = s$, then $Q \subset m \subset Q:m^s = Q:m^g \subset \bar Q$ where $\bar Q$ is the integral closure of $Q$. Thus $Q$ is a reduction of $m$. Conversely if $Q$ is a reduction of $m$, then $Q:m^s = m\subset \bar Q$ and $g\ge s$, hence $g=s$.
\end{proof}

\begin{cor}\label{gq=s} Let $\Sgen$ be a semigroup with corresponding ring $R=k[[S]]$, maximal ideal $m$, and $Q=x^{a_1}R$. Then $Q$ is a reduction of $m$ and $g(a_1) = s_Q(m)$.
\end{cor}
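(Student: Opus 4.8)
The plan is to establish directly that $Q=x^{a_1}R$ is a reduction of $m$ and then to invoke Lemma \ref{g=s}. First I would recall the description of the integral closure of a monomial parameter ideal that was recorded at the very start of the proof of Theorem \ref{main}: for $0\neq u\in S$ one has $\overline{x^{u}R}=x^{u}k[[x]]\cap R$. Applying this with $u=a_1$ gives $\bar Q = x^{a_1}k[[x]]\cap R$, so $\bar Q$ contains $x^{s}$ for every $s\in S$ with $s\ge a_1$.

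The key observation is that $a_1=e$ is the multiplicity of $S$, that is, the smallest nonzero element of $S$; hence every minimal generator satisfies $a_i\ge a_1$. Consequently each generator $x^{a_i}$ of $m$ lies in $\bar Q$, and therefore $m\subseteq \bar Q$. Combining this with $Q\subseteq m$ and the idempotence of the integral closure operation, one gets $\bar m\subseteq \overline{\bar Q}=\bar Q\subseteq \bar m$, whence $\bar Q=\bar m$. Thus $m$ is integral over $Q$; in other words, $Q$ is a reduction of $m$.

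With the reduction property in hand, Lemma \ref{g=s} applies to the Noetherian local ring $(R,m)$ and the parameter ideal $Q$: since $Q$ is a reduction of $m$, the Goto number $g(Q)$ equals the index of nilpotency $s_Q(m)$. As $g(a_1)=g(x^{a_1}R)=g(Q)$ by the definition of $g(u)$, we conclude $g(a_1)=s_Q(m)$, which is the second assertion.

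I do not expect a genuine obstacle here: the entire content reduces to the single fact that $a_1$ is the least nonzero element of $S$, which forces $m\subseteq\bar Q$. The only point requiring a little care is that the inclusion $m\subseteq \bar Q$ already suffices to make $Q$ a reduction of $m$; this is where the standard properties of integral closures of ideals (monotonicity and idempotence) are used, rather than anything semigroup-specific.
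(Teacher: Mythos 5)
Your proof is correct, and since the paper leaves this corollary unproved (it is presented as an immediate consequence of Lemma \ref{g=s}), your argument supplies exactly the intended justification: $a_1=e$ is the least nonzero element of $S$, so $m\subseteq \overline{x^{a_1}R}=x^{a_1}k[[x]]\cap R$, making $Q$ a reduction of $m$, after which Lemma \ref{g=s} finishes the job. The integral-closure characterization of reductions you invoke is the same one the paper itself uses inside the proof of Lemma \ref{g=s}, so your route matches the paper's.
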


\begin{prop}\label{mpure} Let $\Sgen$ be a semigroup and $g=g(a_1)$. Also let $R=k[[S]]$ be the corresponding ring with maximal ideal $m$. Then the following are equivalent.
\begin{enumerate}
\item $S$ is $M$-pure.
\item $(x^{a_1}):m^n = (x^{a_1}) + m^{g-n+1}$ for $1\le n\le g$.
\end{enumerate}
\end{prop}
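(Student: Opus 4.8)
The plan is to translate the ideal equality into a combinatorial statement about $\Ap(S)$ and then match it against the partial order $\pre_M$. Write $J_n = (x^{a_1}):m^n$ and $K_n = (x^{a_1}) + m^{g-n+1}$. Both are monomial ideals of $R=k[[S]]$ containing $(x^{a_1})$: for $K_n$ this is clear, and for $J_n$ it holds because a power series $\sum c_s x^s$ lies in $(x^{a_1})$ precisely when every exponent lies in $a_1+S$, a condition on each monomial separately. Since the residues $\{x^w : w\in\Ap(S)\}$ form a $k$-basis of $R/(x^{a_1})$, the equality $J_n=K_n$ holds iff the two ideals contain exactly the same monomials $x^w$ with $w\in\Ap(S)$. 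For such $w$ we have $w-a_1\notin S$, so $x^w\in K_n$ iff $\ord(w)\ge g-n+1$; recall also that by Proposition~\ref{gAp} every element of $\Ap(S)$ has order at most $g$.

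First I would record the membership criterion for $J_n$. For $w\in\Ap(S)$ one has $x^w\in J_n$ iff $w+s-a_1\in S$ for every $s\in S$ with $\ord(s)\ge n$. Now $w+s-a_1\notin S$ means exactly $w+s\in\Ap(S)$, and if $w+s\in\Ap(S)$ with $s\in S$ then $s\pre w+s$ forces $s\in\Ap(S)$ by the downward closure of the Ap\'ery set. Hence
\[
x^w \in J_n \iff \text{no } w' \in \Ap(S) \text{ has } \ord(w') \ge n \text{ and } w + w' \in \Ap(S).
\]
The inclusion $K_n\subseteq J_n$ on these monomials is then automatic: if $\ord(w)\ge g-n+1$ and $\ord(w')\ge n$ then $\ord(w+w')\ge g+1$, so $w+w'\notin\Ap(S)$. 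Thus the entire content of $J_n=K_n$ is the reverse inclusion, i.e.\ that $x^w\in J_n$ implies $\ord(w)\ge g-n+1$. Writing $\ord(w)=j$, the strongest requirement occurs at $n=g-j$, and a witness there serves all smaller $n$; so the proposition reduces to the single condition $(\dagger)$: for every $w\in\Ap(S)$ with $\ord(w)=j<g$ there exists $w'\in\Ap(S)$ with $\ord(w')\ge g-j$ and $w+w'\in\Ap(S)$.

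Finally I would prove $(\dagger)\iff S$ is $M$-pure. The crucial observation is that if $w+w'\in\Ap(S)$ with $\ord(w')\ge g-j$, then $g\ge\ord(w+w')\ge\ord(w)+\ord(w')\ge g$, which forces $\ord(w')=g-j$, $\ord(w+w')=g$, and $\ord(w)+\ord(w')=\ord(w+w')$; that is, $w\pre_M(w+w')$ with $w+w'$ of top order $g$. Note too that any element of $\Ap(S)$ of order $g$ is automatically $\pre_M$-maximal, so by Definition~\ref{purity} $M$-purity is equivalent to: every $\pre_M$-maximal element of $\Ap(S)\setminus\{0\}$ has order $g$. For $M$-pure $\Rightarrow(\dagger)$, an element $w$ of order $j<g$ (the case $w=0$ being settled by any top-order element) lies $\pre_M$-below some maximal element of the finite poset $(\Ap(S)\setminus\{0\},\pre_M)$, which then has order $g$, and $w'=(w+w')-w$ realizes $(\dagger)$. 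For the converse, if $w$ is $\pre_M$-maximal with $\ord(w)=j<g$, then $(\dagger)$ yields $w'\ne0$ with $w\prec_M w+w'$, contradicting maximality; hence every maximal element has order $g$ and $S$ is $M$-pure.

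I expect the main obstacle to be the bookkeeping of the middle paragraph: checking that $J_n$ is monomial and contains $(x^{a_1})$, that it suffices to test Ap\'ery monomials, and above all the passage from the family of per-$n$ membership conditions to the single condition $(\dagger)$ by isolating $n=g-j$ as the binding constraint. Once that reduction is in place, the equivalence with $M$-purity is short, since the forced order-additivity turns membership in $\Ap(S)$ of $w+w'$ into a $\pre_M$-relation to a top-order element.
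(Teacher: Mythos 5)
Your proof is correct. The core ingredients are the same as in the paper's proof --- the monomial/Ap\'ery membership criterion for $(x^{a_1}):m^n$, the fact that $g$ is the maximum order on $\Ap(S)$ (Proposition \ref{gAp}, equivalently $m^{g+1}\subset(x^{a_1})$ from Corollary \ref{gq=s}), and the forced additivity $\ord(w)+\ord(w')=\ord(w+w')=g$ when $w+w'\in\Ap(S)$ and the orders are large enough --- but the organization is genuinely different in two places. First, you collapse the whole family of equalities over $1\le n\le g$ into the single condition $(\dagger)$ by identifying $n=g-\ord(w)$ as the binding constraint; the paper instead argues per $n$, and in the converse direction extracts information from the two specific values $n=\ord(w_j)$ and $n=\ord(w_j)+1$ for a carefully chosen partner $w_j$ of maximal order. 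Second, for $(2)\Rightarrow(1)$ the paper first establishes purity (from the case $n=1$) and then proves $\maxap(S)=\maxap_M(S)$, concluding via Proposition \ref{mpiffp=}(3); you bypass both purity and Proposition \ref{mpiffp=} entirely by observing that $M$-purity is equivalent to every $\pre_M$-maximal element of $\Ap(S)\setminus\{0\}$ having order $g$, and running a direct maximality contradiction in the finite poset. Your route is somewhat cleaner and more self-contained; the paper's buys the explicit intermediate statement that condition (2) forces $S$ to be pure with $\maxap(S)=\maxap_M(S)$, which ties in with the surrounding structural results of Section 3.
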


\begin{proof} (1) $\Rightarrow$ (2) $m^{g-n+1}m^n = m^{g+1} \stackrel{\ref{gq=s}}{\subset} (x^{a_1})$. So $(x^{a_1}) + m^{g-n+1} \subset (x^{a_1}):m^n$. Now it suffices to show that if $x^{w_i} \in (x^{a_1}) :m^n$, then $x^{w_i} \in m^{g-n+1}$ where $w_i \in \Ap(S)$. By hypothesis there exists $w_j \in \Ap(S)$ such that $w_i+w_j \in \Ap(S)$ and $\ord(w_i) + \ord(w_j) = g$. Since $x^{w_i} \in (x^{a_1}) :m^n$, $\ord(w_j)<n$. Therefore $\ord(w_i) = g-\ord(w_j) > g-n$, and so $x^{w_i} \in m^{g-n+1}$.

For (2) $\Rightarrow$ (1) if $w_i \in \maxap(S)$, then $x^{w_i} \in (x^{a_1}):m = (x^{a_1}) + m^{g}$. So $\ord(w_i) = g = \max\{\ord(w)\st w\in \Ap(S)\}$, and $S$ is pure. Now suppose that $w_i \in \maxap_M(S) \setminus \maxap(S)$. Then the set $U = \{w\in \Ap(S)\setminus\{0\} \st w_i+w \in \Ap(S)\}$ is nonempty. Let $w_j \in U$ be an element with the largest order among the elements of $U$. By our choice of $w_j$, $x^{w_i} \not\in (x^{a_1}):m^{\ord(w_j)} = (x^{a_1}) + m^{g-\ord(w_j)+1}$, but $x^{w_i} \in (x^{a_1}):m^{\ord(w_j)+1} = (x^{a_1}) + m^{g-\ord(w_j)}$. Thus $\ord(w_i) = g-\ord(w_j)$, and we have $g = \ord(w_i) +\ord(w_j) \le \ord(w_i+w_j)\le g$. This implies that $w_i \preceq_M w_i+w_j$ which is a contradiction. Therefore by Proposition \ref{mpiffp=}, the implication follows.
\end{proof}

The following results of \citet{G} are needed in Theorem \ref{equivs}.

\begin{lem}\label{garcia}\citep[Theorem 7, Remark 8]{G} Let $\Sgen$ be a semigroup and $R=k[[S]]$ with maximal ideal $m$ its corresponding ring. Then the following are equivalent.
\begin{enumerate}
\item $\gr_m(R)$ is Cohen-Macaulay.
\item The image of $x^{a_1}$ in $\gr_m(R)$ is a nonzerodivisor.
\item $\ord(u+a_1) = \ord(u) + 1$ for all $u\in S$.
\item $\ord(w+ ka_1) = \ord(w) + k$ for all $w \in \Ap(S)$ and $k\ge0$.
\end{enumerate}
\end{lem}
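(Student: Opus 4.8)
The plan is to run the equivalences as $(2)\Leftrightarrow(1)$, $(2)\Leftrightarrow(3)$, and $(3)\Leftrightarrow(4)$, isolating the only genuinely ring-theoretic step in the first of these and treating the other two as bookkeeping on the semigroup.

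First I would record the structure of $G:=\gr_m(R)$. Since $R=k[[S]]$, the ideal $m^i$ is spanned by the monomials $x^n$ with $\ord(n)\ge i$, so each graded piece $G_i=m^i/m^{i+1}$ has the initial forms $x^{n*}$ with $\ord(n)=i$ as a $k$-basis; hence $\{x^{n*}\st n\in S\}$ is a homogeneous $k$-basis of $G$, graded by $\ord$. The multiplication is $x^{n*}x^{m*}=x^{(n+m)*}$ when $\ord(n+m)=\ord(n)+\ord(m)$ and $x^{n*}x^{m*}=0$ otherwise, and $\dim G=\dim R=1$. Writing $\xi=(x^{a_1})^*\in G_1$ and noting that appending $a_1$ to a maximal representation of $n$ gives $\ord(n+a_1)\ge\ord(n)+1$ for every $n\in S$, I get $\xi\cdot x^{n*}=x^{(n+a_1)*}$ when $\ord(n+a_1)=\ord(n)+1$ and $\xi\cdot x^{n*}=0$ when the inequality is strict. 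Because $n\mapsto n+a_1$ is injective, multiplication by $\xi$ sends the basis of $G_d$ to distinct basis vectors of $G_{d+1}$ or to $0$; it is therefore injective on every $G_d$ exactly when it kills no $x^{n*}$. This proves $(2)\Leftrightarrow(3)$.

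For $(3)\Leftrightarrow(4)$ I would use the Ap\'ery decomposition: each $u\in S$ is uniquely $u=w+ka_1$ with $w\in\Ap(S)$ and $k\ge0$. Assuming (3), a short induction on $k$ gives $\ord(w+(k+1)a_1)=\ord((w+ka_1)+a_1)=\ord(w+ka_1)+1$, which is (4); conversely, applying (4) to $u=w+ka_1$ and to $u+a_1=w+(k+1)a_1$ recovers $\ord(u+a_1)=\ord(u)+1$, which is (3).

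The remaining equivalence $(1)\Leftrightarrow(2)$ is where the real work lies. The direction $(2)\Rightarrow(1)$ is immediate: a homogeneous nonzerodivisor of positive degree forces $\operatorname{depth}_{G_+}G\ge1=\dim G$, so $G$ is Cohen-Macaulay. For $(1)\Rightarrow(2)$ I would invoke Corollary \ref{gq=s}, which says $x^{a_1}R$ is a reduction of $m$; thus $m^{n+1}=x^{a_1}m^n$ for all large $n$, and passing to initial forms gives $\xi G_n=G_{n+1}$ for $n\gg0$. Hence $G/\xi G$ has finite length, so $\xi$ is a system of parameters for the one-dimensional ring $G$, and in a Cohen-Macaulay ring a system of parameters is a regular sequence; in particular $\xi$ is a nonzerodivisor. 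I expect this last step to be the main obstacle, as it is the only place that uses genuine commutative algebra—namely that a homogeneous parameter of a Cohen-Macaulay graded ring is a nonzerodivisor—together with the reduction property of $x^{a_1}$; the other two equivalences are purely combinatorial consequences of the monomial description of $G$.
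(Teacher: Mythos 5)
The paper offers no proof of this lemma at all: it is imported verbatim from Garcia \citep[Theorem 7, Remark 8]{G}, so there is no internal argument to measure yours against. Your proof is correct and self-contained, and it is essentially the standard reconstruction of Garcia's result. The monomial description of $\gr_m(R)$ is right: $m^i$ consists of the power series supported on $\{n\in S \st \ord(n)\ge i\}$, so the initial forms $x^{n*}$ with $\ord(n)=i$ give a $k$-basis of $G_i$, and since multiplication by $\xi=(x^{a_1})^*$ carries distinct basis vectors to distinct basis vectors or to zero, injectivity on each graded piece is exactly the condition $\ord(n+a_1)=\ord(n)+1$ for all $n$; this gives $(2)\Leftrightarrow(3)$ cleanly, and $(3)\Leftrightarrow(4)$ is indeed pure bookkeeping via the decomposition $u=w+ka_1$, $w\in\Ap(S)$. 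For $(1)\Leftrightarrow(2)$ you correctly isolate the two ring-theoretic inputs: a homogeneous nonzerodivisor of positive degree gives $\operatorname{depth} G\ge 1=\dim G$, and conversely the reduction property $m^{n+1}=x^{a_1}m^n$ for $n\gg 0$ (Corollary \ref{gq=s}, which precedes this lemma in the paper, so there is no circularity) makes $G/\xi G$ of finite length, whence $\xi$ is a homogeneous system of parameters and therefore a nonzerodivisor in the Cohen--Macaulay case. The only points I would tighten in a written version are small: say explicitly that a homogeneous element is a nonzerodivisor if and only if multiplication by it is injective on each graded component (decompose a putative annihilator into homogeneous pieces), and that for a graded ring with $G_0=k$ the Cohen--Macaulay property may be tested at the graded maximal ideal, which justifies both the depth computation and the claim that a homogeneous parameter is regular. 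Neither is a gap, just a sentence each.
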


\begin{thm}\label{equivs} Let $\Sgen$ be a semigroup, $R=k[[S]]$ the corresponding ring with maximal ideal $m$, and $r = r(m)$ the reduction number of $m$ (with respect to $x^{a_1}R$). Also let $\gr_m(R)$ be the associated graded ring of $R$ with respect to the maximal ideal $m$, and $\gr_{\bar m}{\bar R}$ the associated graded ring of $\bar R = R/x^{a_1}R$ with respect to $\bar m = m/x^{a_1}R$.
\begin{enumerate}
\item $S$ is $M$-pure symmetric if and only if $\gr_{\bar m}{\bar R}$ is Gorenstein.
\item If $\gr_m(R)$ is Cohen-Macaulay, then $g(a_1) = r$. The converse holds if $S$ is $M$-pure.
\item $S$ is $M$-pure symmetric and $g(a_1) = r$ if and only if $\gr_m(R)$ is Gorenstein.
\end{enumerate}
\end{thm}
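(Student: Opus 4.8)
The plan is to establish the three equivalences in order, obtaining (3) as a formal consequence of (1) and (2) together with the standard behaviour of the Gorenstein property under passage to and from a quotient by a homogeneous nonzerodivisor. For (1), I would first record the combinatorial model of $\gr_{\bar m}(\bar R)$: since $R$ is free over $k[[x^{a_1}]]$ on $\{x^w : w\in\Ap(S)\}$, the ring $\bar R=R/x^{a_1}R$ has these monomials as a $k$-basis, and for $w\in\Ap(S)$ the $\bar m$-adic order of $x^w$ equals $\ord(w)$ (because $x^w\in m^j+x^{a_1}R$ already forces $\ord(w)\ge j$ once $w-a_1\notin S$). Thus $\gr_{\bar m}(\bar R)$ is an Artinian graded $k$-algebra whose degree-$i$ piece has basis the classes of $x^w$ with $w\in\Ap(S)$, $\ord(w)=i$, and whose degree-$1$ part is spanned by the classes of $x^{a_2},\dots,x^{a_\nu}$. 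The heart of (1) is the socle computation: for $w\in\Ap(S)$ and $j\ge2$, the product of the classes of $x^{a_j}$ and $x^w$ is nonzero in $\gr_{\bar m}(\bar R)$ exactly when $w+a_j\in\Ap(S)$ and $\ord(w+a_j)=\ord(w)+1$, i.e. exactly when $w\pre_M w+a_j$. Because distinct Apéry elements give linearly independent classes, no cancellation can occur among the images $x^{a_j+w}$, so the socle is spanned by $\{x^w : w\in\maxap_M(S)\}$ and $\dim_k\operatorname{socle}\gr_{\bar m}(\bar R)=\#\maxap_M(S)$. Since an Artinian graded algebra is Gorenstein iff its socle is one–dimensional, (1) follows from the equivalence $\#\maxap_M(S)=1\Leftrightarrow S$ is $M$-pure symmetric of Proposition \ref{mpsym}.

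For the forward half of (2) I would invoke Garcia's criterion (Lemma \ref{garcia}): if $\gr_m(R)$ is Cohen–Macaulay then $\ord(s-a_1)=\ord(s)-1$ whenever $s-a_1\in S$. Since $g=g(a_1)=s_Q(m)$ by Corollary \ref{gq=s}, we have $m^{g+1}\subseteq (x^{a_1})$; hence any $s$ with $\ord(s)\ge g+1$ has $s-a_1\in S$ and $\ord(s-a_1)=\ord(s)-1\ge g$, giving $x^s\in x^{a_1}m^{g}$. Therefore $m^{g+1}=x^{a_1}m^{g}$ and $r\le g$, and combined with $g\le r$ (from $m^{r+1}=x^{a_1}m^r\subseteq (x^{a_1})$ and Corollary \ref{gq=s}) this yields $g=r$.

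The converse half of (2) is the step I expect to be the main obstacle. Assume $S$ is $M$-pure and $r=g$, and suppose toward a contradiction that $\gr_m(R)$ is not Cohen–Macaulay. First I would note that $r=g$ forces every $s$ with $\ord(s)\ge g+1$ to satisfy $\ord(s-a_1)=\ord(s)-1$ (apply $m^{n}=x^{a_1}m^{n-1}$ with $n=\ord(s)$); equivalently, every \emph{jump element}, one with $\ord(s)-\ord(s-a_1)\ge 2$, has $\ord(s)\le g$. By Garcia's criterion there exist $w\in\Ap(S)$ and $k\ge1$ with $\ord(w+ka_1)>\ord(w)+k$; I would choose such $w$ with $\ord(w)$ \textbf{maximal}, and then $k$ minimal, so that $\ord(w+ka_1)\ge \ord(w)+k+1$ and $s=w+ka_1$ is a jump element, forcing $\ord(w)\le g-k-1<g$. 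The delicate point — and the reason a naive attempt to ``boost'' an arbitrary jump element fails, since $w+(k-1)a_1$ need not be Apéry — is to boost at the \emph{Apéry level}: because $\ord(w)<g$ and $S$ is $M$-pure, $w\notin\maxap_M(S)$, so peeling one generator off a maximal representation produces $j\ge 2$ with $w+a_j\in\Ap(S)$ and $\ord(w+a_j)=\ord(w)+1$. Setting $w'=w+a_j$, superadditivity of $\ord$ gives $\ord(w'+ka_1)\ge \ord(w+ka_1)+1\ge \ord(w')+k+1$, so $w'$ again participates in a jump while $\ord(w')=\ord(w)+1>\ord(w)$, contradicting maximality. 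Hence $\gr_m(R)$ is Cohen–Macaulay.

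Finally, for (3): if $\gr_m(R)$ is Gorenstein it is Cohen–Macaulay, so (2) gives $g(a_1)=r$, the initial form $(x^{a_1})^{*}$ is a homogeneous nonzerodivisor of degree $1$, and $\gr_{\bar m}(\bar R)\cong\gr_m(R)/\bigl((x^{a_1})^{*}\bigr)$; as a quotient of a graded Gorenstein ring by a nonzerodivisor is Gorenstein, (1) shows $S$ is $M$-pure symmetric. Conversely, if $S$ is $M$-pure symmetric and $g(a_1)=r$, then $S$ is $M$-pure, so by (2) $\gr_m(R)$ is Cohen–Macaulay, $(x^{a_1})^{*}$ is a nonzerodivisor, and again $\gr_{\bar m}(\bar R)\cong\gr_m(R)/\bigl((x^{a_1})^{*}\bigr)$, which is Gorenstein by (1); lifting the Gorenstein property through the regular element $(x^{a_1})^{*}$ then makes $\gr_m(R)$ Gorenstein. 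The only subtlety I anticipate here is citing correctly, in the Cohen–Macaulay case, both the isomorphism $\gr_{\bar m}(\bar R)\cong\gr_m(R)/\bigl((x^{a_1})^{*}\bigr)$ and the ascent/descent of Gorensteinness along $(x^{a_1})^{*}$.
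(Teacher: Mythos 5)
Your proof is correct, but two of its three parts follow a genuinely different route from the paper's. For part (1) the paper never computes a socle: it identifies $\dim_k G_i$ with $\beta_i=\#\{w\in\Ap(S)\st \ord(w)=i\}$, uses Proposition \ref{beta stuff} to translate ``$S$ is $M$-pure symmetric'' into ``$R$ is Gorenstein and $\beta_i=\beta_{g-i}$'', and then quotes the Hilbert-function criterion of Heinzer--Kim--Ulrich/Ooishi for Gorensteinness of the associated graded ring of an Artinian Gorenstein local ring. Your direct computation --- the classes of $x^w$, $w\in\Ap(S)$, form a monomial basis on which multiplication by the degree-one classes $[x^{a_j}]$ cannot produce cancellation, so the socle is exactly the $k$-span of $\{[x^w]\st w\in\maxap_M(S)\}$ and Gorensteinness reduces via Proposition \ref{mpsym} to $\#\maxap_M(S)=1$ --- is more self-contained: it bypasses both Proposition \ref{beta stuff} and the external citation, and it explains \emph{why} $\maxap_M(S)$ is the right combinatorial object (it is literally the socle); what it gives up is the Hilbert-series refinement (Corollary \ref{hilbert}) that falls out of the paper's route. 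For the converse half of (2) the paper argues directly rather than by minimal counterexample: given $w_i\in\Ap(S)$ it chooses $w_j$ with $w_i+w_j\in\maxap_M(S)$, notes $\ord(w_i+w_j)=r$ by $M$-purity, and squeezes $\ord(w_i+ka_1)=\ord(w_i)+k$ out of the chain $\ord(w_i)+k+\ord(w_j)\le\ord(w_i+ka_1)+\ord(w_j)\le\ord(w_i+w_j+ka_1)=\ord(w_i+w_j)+k$; your extremal-witness argument exploits the same mechanism (an Apéry element of order less than $g$ can be boosted by one minimal generator while preserving additivity of orders, precisely because $S$ is $M$-pure) but packages it as a contradiction, at the cost of the auxiliary ``jump element'' bookkeeping. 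The forward half of (2) and the deduction of (3) from (1) and (2) via the regularity of the initial form $(x^{a_1})^{*}$ coincide with the paper's treatment, though you make explicit the Valabrega--Valla isomorphism $\gr_{\bar m}(\bar R)\cong\gr_m(R)/\bigl((x^{a_1})^{*}\bigr)$ and the ascent/descent of Gorensteinness that the paper leaves implicit.
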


\begin{proof} (1) Let $\gr_{\bar m}(\bar R) = \bigoplus_{i=0}^g G_i$ where $G_i = \bar m^i/\bar m^{i+1}$ and $g=g(a_1)$. That $G_i = 0$ for $i> g$ follows from Corollary \ref{gq=s}. Also let $\beta_i = \#\{w\in \Ap(S)\st \ord(w) = i\}$. It is not difficult to see that $\beta_i = \dim_k G_i$. Now $S$ is $M$-pure symmetric $\iff$ $R$ is Gorenstein and $\beta_i = \beta_{g-i}$ for $0\le i\le \floor[2]{g+1}$ $\iff$ $\bar R$ is Gorenstein and $\dim_k G_i = \dim_k G_{g-i}$ for $0\le i\le \floor[2]{g+1}$. This last statement is equivalent to the Gorensteiness of $\gr_{\bar m}(\bar R)$ \citep[Theorem 3.1]{HKU} or \citep[Theorem 1.5]{O}.

(2) First note that $r$ is the least integer $k$ such that both of the following conditions hold:
\begin{enumerate}
\item[(i)] $\ord(u+a_1) = \ord(u) + 1$ whenever $\ord(u) \ge k$.
\item[(ii)] $u\not\in \Ap(S)$ whenever $\ord(u) > k$.
\end{enumerate}
Indeed if $\ord(u+a_1) =r+n+1$ for some $n\ge 0$, then $x^{a_1}x^u = x^{u+a_1} \in m^{r+n+1} = x^{a_1}m^{r+n}$. Thus $x^u \in m^{r+n}$, and $\ord(u) = r+n = \ord(u+a_1) - 1$. Moreover if $\ord(u) > r$, then $x^u \in m^{r+1} = x^{a_1}m^r$. This implies that $u = a_1 + s$ for some $s\in S$, and so $u\not\in \Ap(S)$. This establishes that $r$ satisfies these conditions. If $r'$ satisfies (i) and (ii), then it is clear that $m^{r'+1} = x^{a_1}m^{r'}$. Thus $r\le r'$.

If $\gr_m(R)$ is Cohen-Macaulay, by Lemma \ref{garcia}, condition (i) is satisfied for all integers $k\ge 0$. Thus $r$ is least integer $k$ satisfying condition (ii), which is $g$ by Proposition \ref{gAp}. To see that the converse holds when $S$ is $M$-pure let $w_i +ka_1 \in S$ for some $k\ge 0$. Then there exists an element $w_j \in \Ap(S)$ such that $w_i+w_j \in \maxap_M(S)$. Notice that $r = \ord(w_i+w_j)$. We now have
\begin{eqnarray*}
\ord(w_i) +k + \ord(w_j) &\le& \ord(w_i +ka_1) + \ord(w_j)\\
&\le& \ord(w_i +w_j+ka_1)\\
&=& \ord(w_i +w_j)+k\\
&=& \ord(w_i) +\ord(w_j)+k\\
\end{eqnarray*}
Thus $\ord(w_i) +k = \ord(w_i +ka_1)$ and $\gr_m(R)$ is Cohen-Macaulay.

(3) Using (1) and (2), $S$ is $M$-pure symmetric and $g(a_1) = r \iff \gr_{\bar m}(\bar R)$ is Gorenstein and $\gr_m(R)$ is Cohen-Macaulay $\iff$ $\gr_m(R)$ is Gorenstein.
\end{proof}

\begin{exs}\label{show equivs} With notation as in Theorem \ref{equivs},
\begin{enumerate}
\item Let $S=<10,17,35>$. Then $\Ap(S) = \{0, 17, 34, 35, 51, 52, 68, 69, 86, 103\}$ and their orders are $\{0,1,2,1,3,2,4,3,4,5\}$. So $S$ is $M$-pure symmetric. Moreover, $r=5$, so $g(a_1) = r$. Therefore $\gr_m(R)$ is Gorenstein.
\item Let $S=<6,7,15>$. Then $\Ap(S) = \{0,7,14,15,22,29\}$ and their orders are $\{0,1,2,1,2,3\}$. So $S$ is $M$-pure symmetric. However, $r = 5$, so $g(a_1) < r$. Thus $\gr_{\bar m}(\bar R)$ is Gorenstein, but $\gr_m(R)$ is not.
\item Let $S=<5,8,12>$. Then $\Ap(S) = \{0,8,12,16,24\}$ and their orders are $\{0,1,1,2,3\}$. So $S$ is symmetric, but not $M$-pure. Hence $R$ is Gorenstein, but $\gr_{\bar m}(\bar R)$ is not.
\end{enumerate}
\end{exs}

\begin{ex}\label{78919} With notation as in Theorem \ref{equivs}, the example $S=<7,8,9,19>$ shows that $g(a_1) = r$ is not in general equivalent to the Cohen-Macaulayness of $\gr_m(R)$. We have $g(a_1) = r = 3$, but $\gr_m(R)$ is not Cohen-Macaulay by Lemma \ref{garcia} since $\ord(7+19) = \ord(8+(2)9) = 3$. \citet[Theorem 4.12]{S} has also recently considered the relationship between the equality $g(a_1) = r$ and the Cohen-Macaulayness of $\gr_m(R)$.
\end{ex}

We get the following corollaries by combining Proposition \ref{mpure} and Theorem \ref{equivs}, and Proposition \ref{beta stuff} and Theorem \ref{equivs} respectively. The first contains special cases of \citep[Theorem 3.1(3)]{HKU} (or \citep[Theorem 1.5]{O}) and \citep[Theorem 3.9]{HKU}.

\begin{cor}With notation as in Theorem \ref{equivs},
\begin{enumerate}
\item $\gr_{\bar m}(\bar R)$ is Gorenstein $\iff$ $R$ is Gorenstein and $(x^{a_1}):m^n = (x^{a_1}) + m^{g-n+1}$ for $1\le n\le g$.
\item $\gr_m(R)$ is Gorenstein $\iff$ $R$ is Gorenstein and $(x^{a_1}):m^n = (x^{a_1}) + m^{r-n+1}$ for $1\le n\le r$.
\end{enumerate}
\end{cor}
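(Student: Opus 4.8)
The plan is to reduce both equivalences to statements already proved, by translating each Gorenstein condition on an associated graded ring into semigroup language via Theorem \ref{equivs}, and then rewriting the two semigroup ingredients ``$S$ is symmetric'' and ``$S$ is $M$-pure'' in ring-theoretic terms. The first is handled by Kunz's theorem \citep{K}, that $R$ is Gorenstein if and only if $S$ is symmetric; the second by the colon-ideal description of $M$-purity in Proposition \ref{mpure}, namely $S$ is $M$-pure exactly when $(x^{a_1}):m^n = (x^{a_1}) + m^{g-n+1}$ for $1\le n\le g$. For part (1) this is essentially a formal manipulation: Theorem \ref{equivs}(1) identifies the Gorensteiness of $\gr_{\bar m}(\bar R)$ with ``$S$ is $M$-pure symmetric,'' which by definition is the conjunction ``$S$ symmetric and $S$ $M$-pure.'' I would replace the first conjunct by ``$R$ Gorenstein'' via \citep{K} and the second by the displayed colon equalities via Proposition \ref{mpure}; since each replacement is itself an equivalence, conjoining them yields exactly the right-hand side of (1).

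For part (2) the starting point is Theorem \ref{equivs}(3), which identifies the Gorensteiness of $\gr_m(R)$ with ``$S$ is $M$-pure symmetric and $g(a_1)=r$.'' The forward implication is then immediate: symmetry gives $R$ Gorenstein, and substituting $g=r$ into the $M$-purity equalities of Proposition \ref{mpure} turns each $m^{g-n+1}$ into $m^{r-n+1}$ over the range $1\le n\le r$, which is the asserted condition. The reverse implication is where the actual work lies, because the hypothesis is phrased in terms of the reduction number $r$ rather than the Goto number $g$, and I must recover both $M$-purity and the equality $g=r$ from the colon relations alone.

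The key observation for the reverse direction of (2) is the single instance $n=r$, which reads $(x^{a_1}):m^r = (x^{a_1}) + m = m \ne R$; hence $m^r\not\subseteq (x^{a_1})$. By Corollary \ref{gq=s}, $g(a_1)$ equals the index of nilpotency $\min\{k : m^{k+1}\subseteq (x^{a_1})\}$, so $m^r\not\subseteq (x^{a_1})$ forces $g\ge r$. Since one always has $g\le r$ (because $m^{r+1}=x^{a_1}m^r\subseteq (x^{a_1})$ exhibits $r$ as an admissible nilpotency index), this pins down $g=r$. With $g=r$ established, the hypothesized relations become verbatim condition (2) of Proposition \ref{mpure}, so $S$ is $M$-pure, while $R$ Gorenstein gives symmetry through \citep{K}; thus $S$ is $M$-pure symmetric with $g(a_1)=r$, and $\gr_m(R)$ is Gorenstein by Theorem \ref{equivs}(3). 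I expect the only genuine obstacle to be exactly this step—seeing that the lone colon relation at $n=r$ already collapses $r$ onto $g$—and the two routine facts I would verify carefully are the inequality $g\le r$ and that $g\ge 1$ (so that the range $1\le n\le r$ is nonempty), the latter holding because $R\ne k[[x]]$ guarantees a nonzero Apéry element.
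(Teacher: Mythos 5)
Your proposal is correct and follows essentially the same route as the paper, which derives this corollary simply by combining Proposition \ref{mpure} with Theorem \ref{equivs} (plus Kunz's theorem for symmetry $\Leftrightarrow$ Gorensteinness of $R$). Your extra step for the reverse direction of (2) --- using the $n=r$ instance together with Corollary \ref{gq=s} to force $g(a_1)=r$ before invoking Proposition \ref{mpure} --- is exactly the detail the paper leaves implicit, and it is argued correctly.
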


\begin{cor}\label{hilbert} With notation as in Theorem \ref{equivs}, also let $H_{\gr_{\bar m}(\bar R)}(t) = \sum_{j=0}^g h_it^i$ be the Hilbert series of $\gr_{\bar m}(\bar R)$. If $R$ is Gorenstein, then $\sum_{j=0}^i h_j \ge \sum_{j=0}^i h_{g-j}$ for $0\le i\le g$ with equality holding for all $i$ if and only if $\gr_{\bar m}(\bar R)$ is Gorenstein.
\end{cor}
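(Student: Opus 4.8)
The plan is to recognize that this corollary is essentially a translation of Proposition \ref{beta stuff} into the language of the Hilbert series of $\gr_{\bar m}(\bar R)$, using the dictionary already established in the proof of Theorem \ref{equivs}(1). First I would dispose of the hypothesis: by Kunz's theorem \citep{K}, the assumption that $R$ is Gorenstein is equivalent to $S$ being symmetric, and symmetry is precisely the standing assumption required to invoke Proposition \ref{beta stuff}.

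Next I would pin down the coefficients. As noted in the proof of Theorem \ref{equivs}(1), if one writes $\gr_{\bar m}(\bar R) = \bigoplus_{i=0}^g G_i$ with $G_i = \bar m^i/\bar m^{i+1}$ and sets $\beta_i = \#\{w\in \Ap(S) \st \ord(w) = i\}$, then $\beta_i = \dim_k G_i$. Since $h_i$ is by definition $\dim_k G_i$, this yields the identification $h_i = \beta_i$ for all $0\le i\le g$. This is the only bookkeeping step, and it is already supplied.

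With this identification in hand, the inequality $\sum_{j=0}^i h_j \ge \sum_{j=0}^i h_{g-j}$ for $0\le i\le g$ is exactly the content of Proposition \ref{beta stuff}(1) applied to the symmetric semigroup $S$. For the equality clause, I would observe that equality holding for all $i$ is condition (2a) of Proposition \ref{beta stuff}, which by that proposition is equivalent to $S$ being $M$-pure. Combining this with the symmetry of $S$ (coming from the Gorensteinness of $R$) shows that equality for all $i$ is equivalent to $S$ being $M$-pure symmetric, and by Theorem \ref{equivs}(1) this is in turn equivalent to $\gr_{\bar m}(\bar R)$ being Gorenstein.

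I expect no genuine obstacle here, since the corollary is an assembly of results proved earlier; the only point requiring care is the identification $h_i = \beta_i$, after which both the inequality and the equivalence follow immediately from Proposition \ref{beta stuff} and Theorem \ref{equivs}(1).
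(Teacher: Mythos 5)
Your proposal is correct and follows exactly the route the paper intends: the paper derives this corollary by combining Proposition \ref{beta stuff} with Theorem \ref{equivs}, using the identification $h_i=\beta_i=\dim_k G_i$ already noted in the proof of Theorem \ref{equivs}(1). Nothing is missing.
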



Some definitions are needed so that we can continue to work in the semigroup setting. The terminology in Definition \ref{m def}(1) makes reference to the property of the order function as stated in Lemma \ref{garcia}. Corollary \ref{msymiffspa} follows from Theorem \ref{equivs}.

\begin{ddef}\label{m def}\
\begin{enumerate}
\item $S$ is called {\em $M$-additive} if $\gr_m(R)$ is Cohen-Macaulay.
\item $S$ is called {\em $M$-symmetric} if $\gr_m(R)$ is Gorenstein.
\end{enumerate}
\end{ddef}

\begin{cor}\label{msymiffspa} A semigroup $S$ is $M$-symmetric if and only if it is symmetric, $M$-pure, and $M$-additive.
\end{cor}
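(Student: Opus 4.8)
The plan is to derive this corollary directly from Theorem \ref{equivs}, by unfolding the definitions of $M$-additive and $M$-symmetric given in Definition \ref{m def} and observing that ``$M$-pure symmetric'' means nothing more than ``symmetric and $M$-pure.''

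First I would note that by Definition \ref{m def}(2), $S$ is $M$-symmetric precisely when $\gr_m(R)$ is Gorenstein, and by Theorem \ref{equivs}(3) this holds if and only if $S$ is $M$-pure symmetric and $g(a_1) = r$. Since ``$M$-pure symmetric'' unpacks to ``symmetric and $M$-pure,'' it then only remains to show that, under the standing hypotheses that $S$ is symmetric and $M$-pure, the condition $g(a_1) = r$ is equivalent to $S$ being $M$-additive, that is, to $\gr_m(R)$ being Cohen--Macaulay (Definition \ref{m def}(1)). Granting this, the chain of equivalences $S$ is $M$-symmetric $\iff$ $\gr_m(R)$ is Gorenstein $\iff$ $S$ is symmetric, $M$-pure, and $g(a_1) = r$ $\iff$ $S$ is symmetric, $M$-pure, and $M$-additive closes up.

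The equivalence needed for the last step is exactly Theorem \ref{equivs}(2): one implication (Cohen--Macaulayness of $\gr_m(R)$ forces $g(a_1) = r$) is unconditional, while the converse implication ($g(a_1) = r$ forces Cohen--Macaulayness of $\gr_m(R)$) requires $M$-purity, which is among our hypotheses. Thus, once $S$ is assumed $M$-pure, the statements ``$g(a_1) = r$'' and ``$M$-additive'' are interchangeable, and substituting one for the other inside Theorem \ref{equivs}(3) gives the corollary.

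The only real subtlety is to invoke the converse half of Theorem \ref{equivs}(2) legitimately: this is precisely where the $M$-pure hypothesis is indispensable and cannot be omitted. In the forward direction of the corollary ($M$-symmetric implies the three listed conditions) the $M$-additivity could alternatively be read off for free, since a Gorenstein graded ring is automatically Cohen--Macaulay; but the reverse direction genuinely depends on $M$-purity to upgrade the Cohen--Macaulayness of $\gr_m(R)$ back to the equality $g(a_1) = r$ before Theorem \ref{equivs}(3) can be applied.
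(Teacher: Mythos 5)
Your proof is correct and follows essentially the same route as the paper, which simply cites Theorem \ref{equivs} for this corollary without spelling out the details; your elaboration (using part (3) to translate $M$-symmetry into ``$M$-pure symmetric and $g(a_1)=r$,'' then using part (2) to exchange ``$g(a_1)=r$'' with $M$-additivity under the $M$-pure hypothesis) is exactly the intended argument. Your remark that the converse half of Theorem \ref{equivs}(2) is where $M$-purity is genuinely needed is accurate and well placed.
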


The next task is to find $M$-pure semigroups. First consider the following list of $M$-additive semigroups.

\begin{prop}\label{grcm} \citep[Theorem 3.12]{B} If the semigroup $S$ satisfies one of the following conditions, then it is $M$-additive.
\begin{enumerate}
\item $S$ has embedding dimension 2.
\item $S$ is of maximal embedding dimension (i.e. $\nu = e$).
\item $S$ is of almost maximal embedding dimension (i.e. $\nu = e-1$) and $\#T<e-2$.
\item $S$ is generated by an arithmetic sequence.
\item $e\le 4$, except the case $S = <4,a_2,a_3>$ such that $a_3 = 3a_2-4$.
\item $S$ is symmetric and $\nu=e-2$.
\end{enumerate}
\end{prop}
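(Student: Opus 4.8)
The plan is to reduce every case to the combinatorial criterion of Lemma~\ref{garcia}: by parts (3)--(4) of that lemma, $S$ is $M$-additive exactly when $\ord(w+ka_1)=\ord(w)+k$ for every $w\in\Ap(S)$ and every $k\ge 0$. Since appending $k$ copies of $a_1$ to a maximal representation of $w$ already gives $\ord(w+ka_1)\ge\ord(w)+k$, the whole content is the reverse inequality. Before splitting into cases I would record one descent observation that isolates the single phenomenon each case must exclude: if some $w\in\Ap(S)$ and $k\ge 1$ violate the identity, pick a violating pair with $k$ minimal and a maximal representation $w+ka_1=\sum_i c_ia_i$; if $c_1\ge 1$ then deleting one $a_1$ exhibits a representation of $w+(k-1)a_1$ of length $\ord(w+ka_1)-1>\ord(w)+(k-1)$, contradicting minimality of $k$. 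Hence at a minimal violation the maximal representation has $c_1=0$. Thus $M$-additivity can fail only because some $w+ka_1$ (with $w\in\Ap(S)$, $k\ge 1$) attains its order through a representation that avoids $a_1$ altogether and uses only the larger generators.

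With this reduction, cases (1), (2) and (4) are direct. For embedding dimension~$2$ one has $\Ap(S)=\{0,a_2,2a_2,\dots,(a_1-1)a_2\}$; since $\gcd(a_1,a_2)=1$ the coefficient of $a_2$ in any representation of $ja_2+ka_1$ is $\equiv j\pmod{a_1}$ and nonnegative, hence at least $j$, so a maximal representation uses $a_2$ as seldom as possible, giving length exactly $j+k$. For maximal embedding dimension ($\nu=e$) the nonzero Apéry elements are precisely the minimal generators $a_2,\dots,a_e$, each of order~$1$, and the defining equality $m^2=a_1m$ propagates to $m^{i+1}=a_1m^i$ for all $i\ge 1$; then $x^{u+a_1}\in m^{i+2}=a_1m^{i+1}$ would force $u\in m^{i+1}$, so the image of $x^{a_1}$ in $\gr_m(R)$ is a nonzerodivisor and Lemma~\ref{garcia} applies. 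For a semigroup generated by an arithmetic sequence I would invoke the explicit description of $\Ap(S)$ and of $\ord$ (maximal representations favour using the largest admissible generator), from which $\ord(w+ka_1)=\ord(w)+k$ reads off at once.

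The remaining cases are where the work lies, and I expect cases~(3) and~(6) to be the main obstacle, since there the generating set is no longer rigid enough to pin down maximal representations by inspection. The lever is Lemma~\ref{maxminchar}: part~(1) gives $\minap(S)=\{a_2,\dots,a_\nu\}$, and part~(2) gives a bijection $w\mapsto w-a_1$ between $\maxap(S)$ and $T$, so $\#\maxap(S)=\#T$. In the almost-maximal case~(3) the hypothesis $\#T<e-2=\#\minap(S)$ bounds the number of maximal Apéry elements strictly below the number of minimal ones, forcing enough of $\Ap(S)$ into a single chain that no order can jump; in the symmetric case~(6), where $T=\{f\}$ and $\nu=e-2$, I would pair $w_i$ with $w_{e-1-i}$ via Proposition~\ref{psym} and use the deficiency $\nu=e-2$ to control the two non-generator Apéry elements. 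Finally case~(5) is organised by multiplicity: $e\le 3$, together with $\nu\in\{2,e\}$, falls under cases~(1)--(2), so the only genuinely new situation is $e=4$, $\nu=3$; here the third nonzero Apéry element dominates one minimal generator, forcing $\#T\le 2$, so $\#T\le 1$ is handled by case~(3) and only $\#T=2$ remains. That residual I would settle by a direct analysis of the finitely many shapes of $\Ap(S)$, isolating $S=\langle4,a_2,a_3\rangle$ with $a_3=3a_2-4$ as the unique configuration in which a maximal representation of some $w+ka_1$ escapes through the generators $a_2,a_3$ and breaks additivity.
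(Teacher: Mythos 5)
The first thing to note is that the paper does not actually prove this proposition: it is stated with a citation to Barucci's Theorem 3.12, so there is no internal argument to compare yours against. Judged on its own terms, your framework is sound and well chosen: the reduction to Lemma \ref{garcia}(3)--(4), and the descent observation that a minimal violation $\ord(w+ka_1)>\ord(w)+k$ forces a maximal representation of $w+ka_1$ with $c_1=0$, are both correct and isolate the right phenomenon. Your arguments for cases (1) and (2) are essentially complete (the congruence argument for $\nu=2$ and the propagation of $m^2=x^{a_1}m$ for $\nu=e$ both check out), and case (4) is plausible modulo the standard description of orders in an arithmetic-sequence semigroup.

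The genuine gap is that cases (3), (5) and (6) --- which you yourself flag as ``where the work lies'' --- are never actually argued. For (3), the counting $\#\maxap(S)=\#T<e-2=\#\minap(S)$ tells you something about which Ap\'ery elements are $\pre$-maximal, but the elements whose orders must be controlled are $w+ka_1$ with $k\ge 1$, which lie \emph{outside} $\Ap(S)$; the phrase ``forcing enough of $\Ap(S)$ into a single chain that no order can jump'' is not an argument, and the link from the cardinality bound on $T$ to the nonexistence of a long $a_1$-free representation of $w+ka_1$ is exactly the content of the theorem. The same objection applies to (6): the symmetric pairing $w_i\mapsto w_{e-1-i}$ constrains $\Ap(S)$ itself, not the orders of $w+ka_1$, and you never explain why the deficiency $\nu=e-2$ (as opposed to $\nu=e-3$, where $\langle 6,7,15\rangle$ shows the statement fails) closes the escape route. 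In (5), the residual subcase $e=4$, $\nu=3$, $\#T=2$ is precisely where the exceptional family lives; you assert that a direct analysis would isolate $a_3=3a_2-4$ (where indeed $a_3+a_1=3a_2$ gives $\ord(a_3+a_1)=3>\ord(a_3)+1$), but that analysis is not performed. So while nothing you wrote is wrong, the proposal is a proof plan rather than a proof: the three hardest cases are reduced to the correct combinatorial question and then left open.
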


This list is not exhaustive, but it provides many good examples. As the next proposition shows, many of these semigroups are also $M$-pure.

\begin{prop}\label{mpureadd} If the semigroup $S$ satisfies one of the following conditions, then it is $M$-pure and $M$-additive.
\begin{enumerate}
\item $S$ has embedding dimension 2.
\item $S$ is of maximal embedding dimension.
\item $S$ is symmetric and of almost maximal embedding dimension.
\item $S$ is generated by an arithmetic sequence.
\item $e\le 4$, except the case $S = <4,a_2,a_3>$ such that $S$ is not symmetric.
\end{enumerate}
\end{prop}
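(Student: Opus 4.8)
The plan is to read off $M$-additivity in every case directly from Proposition \ref{grcm}, and to verify $M$-purity by hand class by class, using the reformulation of Proposition \ref{mpiffp=}(3) that $S$ is $M$-pure exactly when it is pure and $\maxap(S)=\maxap_M(S)$; equivalently, it suffices to show that all $\pre_M$-maximal nonzero Apéry elements carry a common order. For the $M$-additivity half, cases (1), (2), (4) are immediate from Proposition \ref{grcm}(1),(2),(4). In (3) symmetry gives $T=\{f\}$, so $\#T=1<e-2$ whenever $e\ge 4$ and Proposition \ref{grcm}(3) applies (when $e=3$, almost maximal embedding dimension is just embedding dimension $2$, i.e.\ case (1)). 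Finally (5) reduces to (1)--(3): for $e\le 4$ every semigroup has $\nu=2$ (case (1)), $\nu=e$ (case (2)), or $e=4,\ \nu=3$, and the last are the $S=\langle 4,a_2,a_3\rangle$, covered by case (3) unless they are non-symmetric, which are precisely the excluded ones. So the work is in $M$-purity.

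Cases (1) and (2) are short. When $\nu=2$ one has $\Ap(S)=\{0,a_2,2a_2,\dots,(a_1-1)a_2\}$ with $\ord(ja_2)=j$ (any representation of $ja_2\in\Ap(S)$ must avoid $a_1$), so the orders are pairwise distinct and $ja_2\pre_M(a_1-1)a_2$ for every $j$; thus $\maxap_M(S)=\{(a_1-1)a_2\}$. When $\nu=e$, every nonzero element of $\Ap(S)$ is a minimal generator and so has order $1$, whence every element of $\maxap_M(S)$ has order $1$.

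For case (3) I would first pin down the orders. Almost maximal embedding dimension forces exactly one nonzero Apéry element $w^\ast$ to fail to be a minimal generator; if some Apéry element had order $\ge 3$, deleting one summand from a maximal representation would drop the order by exactly one and produce a second non-generator Apéry element (of order $\ge 2$) strictly below it, a contradiction. Hence $\ord(w^\ast)=2$ and $g=g(a_1)=2$. Symmetry gives $\maxap(S)=\{w_{e-1}\}$ by Proposition \ref{psym}, and since $w_{e-1}$ carries the maximal order it must equal $w^\ast$. For each generator $a_i$ with $i\ge 2$ we have $a_i\pre w_{e-1}$, so $w_{e-1}-a_i$ is a nonzero Apéry element of order at most $\ord(w_{e-1})-\ord(a_i)=1$, hence exactly $1$; therefore $\ord(a_i)+\ord(w_{e-1}-a_i)=2=\ord(w_{e-1})$, i.e.\ $a_i\pre_M w_{e-1}$. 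Thus $\maxap_M(S)=\{w_{e-1}\}$ and $S$ is $M$-pure.

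The main obstacle is case (4). Here I would use the classical description of the Apéry set of a semigroup generated by an arithmetic progression: writing $a_i=a_1+(i-1)d$ and $k=\nu-1$, one has $\Ap(S)=\{n_j:0\le j\le a_1-1\}$ with $n_j=\lceil j/k\rceil a_1+jd$ and $\ord(n_j)=\lceil j/k\rceil$, so $g=\lceil (a_1-1)/k\rceil$ and the $n_j$ increase with $j$. It then suffices to show that any $n_j$ with $\ord(n_j)<g$ is not $\pre_M$-maximal. If $j+k\le a_1-1$, then $n_{j+k}-n_j=a_1+kd=a_\nu$ and $\ord(n_j)+\ord(a_\nu)=\ord(n_{j+k})$, so $n_j\pre_M n_{j+k}$. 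Otherwise $0\le a_1-1-j<k$, and then $s=n_{a_1-1}-n_j$ can be written as a sum of $g-\ord(n_j)$ generators by absorbing the small surplus $a_1-1-j$ into a single generator, while superadditivity of $\ord$ forces $\ord(s)\le g-\ord(n_j)$; hence $\ord(n_j)+\ord(s)=g=\ord(n_{a_1-1})$ and $n_j\pre_M n_{a_1-1}$. Either way $n_j\notin\maxap_M(S)$, so every $\pre_M$-maximal Apéry element has order $g$ and $S$ is $M$-pure. The delicate points I expect to spend the most care on are confirming the Apéry description and separating the boundary block $j>a_1-1-k$ from the generic step $n_j\pre_M n_{j+k}$.
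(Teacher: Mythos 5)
Your proposal is correct and follows essentially the same route as the paper: $M$-additivity is quoted from Proposition \ref{grcm}, and $M$-purity is checked class by class by writing down $\Ap(S)$ and exhibiting explicit $\pre_M$-relations into the top-order elements (your ``step by $a_\nu$'' plus boundary absorption in the arithmetic-sequence case is exactly the paper's two-case argument in different notation, and your derivation of the Ap\'ery structure in case (3) just makes explicit what the paper quotes). The only substantive difference is that the paper additionally verifies that the excluded non-symmetric $\langle 4,a_2,a_3\rangle$ really fail $M$-purity, which is not needed for the implication as stated.
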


\begin{proof} By Proposition \ref{grcm} $S$ is $M$-additive.\\

(1) $\Ap(S) = \{0,a_2,2a_2,\dots,(a_1-1)a_2\}$. Clearly $ia_2 \pre_M  (a_1-1)a_2$ for all $0\le i\le e-1$. Thus $\maxap_M(S) = \{(a_1-1)a_2\}$, and $S$ is $M$-pure.

(2) $\Ap(S) = \{0,a_2,a_3,\dots,a_\nu\}$ and $\maxap_M(S) = \{a_2,a_3,\dots,a_\nu\}$. Since every element of $\maxap_M(S)$ has order 1, $S$ is $M$-pure.

(3) $\Ap(S) = \{0,a_2,a_3,\dots,a_\nu,a_i+a_j\}$ where $i+j = \nu+2$. Clearly $a_i \pre_M  a_i+a_j$ for all $2\le i\le \nu$. Thus $\maxap_M(S) = \{a_2+a_\nu\}$, and $S$ is $M$-pure.

(4) Let $e-1 = q(\nu-1) + m$ where $0\le m< \nu-1$. Also let $2\le n\le \nu$ and $0\le k\le q$. Then $\Ap(S) = \{0,a_n+ka_\nu\}$ where $2\le n+k(\nu-1) \le e$. For $u = a_n+ka_\nu\in \Ap(S)$, this representation is unique for $u$. Hence $\ord(a_n+ka_\nu) = k+1$. We consider two cases.

The first case is when $m=0$. Clearly $\{a_n+(q-1)a_\nu \st 2\le n\le \nu\} \subset \maxap_M(S)$. Since $(a_n +ka\nu) + (q-k-1)a_\nu = a_n + (q-1)a_\nu$ and $\ord(a_n +ka\nu) + \ord((q-k-1)a_\nu)) = (k+1) + (q-k-1) = q = \ord(a_n + (q-1)a_nu)$, we have $\{a_n+(q-1)a_\nu \st 2\le n\le \nu\} = \maxap_M(S)$. So $S$ is $M$-pure.

The second case is when $m>0$. Clearly $\{a_n+qa_\nu \st 2\le n\le m+1\} \subset \maxap_M(S)$. Since, for $k<q$, $\big(a_n + ka_\nu \big) + \big(a_{\nu - n+2} +(q-k-1)a_\nu \big) = a_2 + qa_\nu$ and $\ord(a_n + ka_\nu \big)) + \ord(a_{\nu - n+1} +(q-k-1)a_\nu) = (k+1) + (q-k) = q+1 = \ord(a_2 + qa_\nu)$, we have $\{a_n+qa_\nu \st 2\le n\le m+1\}= \maxap_M(S)$. So $S$ is $M$-pure.

(5) If $S = <4,a_2,a_3>$ and $S$ is not symmetric, then $\Ap(S) = \{0,a_2,2a_2,a_3\}$ or $\Ap(S) = \{0,a_2,a_3,2a_2\}$. Either way, $\maxap_M(S)=\{a_3,2a_2\}$. Since the orders of the elements in $\maxap_M(S)$ are not equal $S$ is not $M$-pure. The rest follows from (1), (2) and (3).
\end{proof}

\begin{cor}\label{grg} If the semigroup $S$ satisfies one of the following conditions, then $S$ is $M$-symmetric.
\begin{enumerate}
\item $S$ has embedding dimension 2.
\item $S$ is symmetric and of almost maximal embedding dimension.
\item $S$ is generated by an arithmetic sequence and $(e-2)/(\nu-1)$ is an integer.
\item $e\le 4$ and $S$ is symmetric.
\end{enumerate}
\end{cor}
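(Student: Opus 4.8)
The plan is to reduce the entire statement to Corollary \ref{msymiffspa}, which characterizes $M$-symmetry as the simultaneous occurrence of symmetry, $M$-purity, and $M$-additivity. In each of the four cases Proposition \ref{mpureadd} already supplies $M$-purity together with $M$-additivity, so the only thing left to check is symmetry, after which \ref{msymiffspa} closes the argument. Thus the whole proof is organized around isolating and verifying the symmetry condition case by case.

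Cases (2) and (4) I would dispose of immediately. In (2), $S$ is symmetric by hypothesis and Proposition \ref{mpureadd}(3) gives $M$-pure and $M$-additive, so \ref{msymiffspa} applies. In (4), $S$ is again symmetric by hypothesis; since the only semigroups with $e\le 4$ excluded from Proposition \ref{mpureadd}(5) are the non-symmetric $\langle 4,a_2,a_3\rangle$, the symmetry assumption guarantees $S$ is not the exceptional case, so \ref{mpureadd}(5) yields $M$-pure and $M$-additive, and \ref{msymiffspa} finishes.

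For case (1) I would read symmetry directly off the Apéry set. Here $\nu=2$ and $\Ap(S)=\{0,a_2,2a_2,\dots,(a_1-1)a_2\}$; since $ia_2 \pre (i+1)a_2$ for each $i$, this is a $\pre$-chain with unique maximal element $(a_1-1)a_2$, so $\#\maxap(S)=1$ and $S$ is symmetric by Proposition \ref{psym}. Combined with the $M$-purity and $M$-additivity from \ref{mpureadd}(1), Corollary \ref{msymiffspa} gives $M$-symmetry.

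The hard part will be case (3), and it is really a matter of matching bookkeeping. Writing $e-1 = q(\nu-1)+m$ with $0\le m<\nu-1$ as in the proof of Proposition \ref{mpureadd}(4), I would observe that the hypothesis that $(e-2)/(\nu-1)$ is an integer forces $m=1$ (the value $\nu=2$ being already covered by case (1), since an arithmetic sequence of embedding dimension $2$ is just the two-generator case). In the $m>0$ branch of that proof it was shown that $\maxap_M(S)=\{a_n+qa_\nu \st 2\le n\le m+1\}$, which collapses to the single element $a_2+qa_\nu$ precisely when $m=1$. A singleton $\maxap_M(S)$ yields $M$-pure symmetry by Proposition \ref{mpsym}, so in particular $S$ is symmetric; together with the $M$-purity and $M$-additivity of \ref{mpureadd}(4) and Corollary \ref{msymiffspa}, $S$ is $M$-symmetric. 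The only delicate point I anticipate is correctly translating the divisibility hypothesis into the residue $m=1$ and aligning it with the indexing conventions of the earlier proof.
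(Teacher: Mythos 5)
Your proposal is correct and follows essentially the same route as the paper: reduce everything to verifying symmetry via Corollary \ref{msymiffspa} and Proposition \ref{mpureadd}, take symmetry as given in cases (2) and (4), and read it off the explicit Ap\'ery sets computed in the proof of Proposition \ref{mpureadd} for cases (1) and (3). Your only (harmless) variation is that in case (3) you invoke Proposition \ref{mpsym} on the singleton $\maxap_M(S)$ rather than Proposition \ref{psym} on $\maxap(S)$, and your observation that the divisibility hypothesis forces $m=1$ when $\nu\ge 3$ is exactly the bookkeeping the paper leaves implicit.
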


\begin{proof} By Proposition \ref{mpureadd} and Corollary \ref{msymiffspa}, we need only show that $S$ is symmetric. This is assumed to be true for (2) and (4). For (1) and (3) we can use Proposition \ref{psym} since we know the Apery set of $S$ for these cases (see the proof of Proposition \ref{mpureadd}).
\end{proof}

In general there are no implications among the conditions symmetry, $M$-purity and $M$-additivity. In other words there exists a semigroup satisfying any combination of these conditions. Eight such examples with the least multiplicity and embedding dimension possible will be given. The next few lemmas will be helpful, some of which are interesting in their own right. These lemmas will give some idea of how the conditions are connected, at least for small multiplicities.

\begin{lem}\label{g and ev} Let $S$ be a semigroup. Then
\begin{enumerate}
\item $g(a_1) \le e-\nu+1$.
\item If $S$ is $M$-pure symmetric and $2<\nu<e-1$, then $g(a_1) \le e-2\nu+3$. Moreover, if $\nu = e-k$, then $k+3\le e \le 2k$.
\end{enumerate}
\end{lem}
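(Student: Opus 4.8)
The plan is to work entirely with the order distribution of the Apéry set. Write $\Ap(S) = \{w_0,\dots,w_{e-1}\}$ and set $\beta_i = \#\{w\in\Ap(S)\st \ord(w) = i\}$, so that $\sum_{i\ge 0}\beta_i = \#\Ap(S) = e$. By Proposition \ref{gAp}, $g(a_1) = \max\{\ord(w)\st w\in\Ap(S)\} =: g$, so the $\beta_i$ are supported on $0\le i\le g$. I would record three facts. First, $\beta_0 = 1$, since $w_0 = 0$ is the only element of order $0$. Second, $\beta_1 = \nu - 1$: the order-$1$ elements of $\Ap(S)$ are exactly the minimal generators they equal, and by Lemma \ref{maxminchar}(1) the minimal generators lying in $\Ap(S)$ are precisely $a_2,\dots,a_\nu$ (note $a_1\notin\Ap(S)$). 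Third, $\beta_i\ge 1$ for every $0\le i\le g$: choosing $w\in\Ap(S)$ with $\ord(w)=g$ and a maximal representation $w = \sum_{i=2}^\nu c_i a_i$ (no $a_1$ can occur, since $w-a_1\notin S$), the partial sums obtained by adjoining the generators one at a time lie below $w$, hence in $\Ap(S)$ by downward closure, and by Lemma \ref{premaxrep} the partial sum of length $t$ has order exactly $t$. Part (1) then follows from the estimate
\[
e = \sum_{i=0}^g \beta_i = \beta_0 + \beta_1 + \sum_{i=2}^{g}\beta_i \ge 1 + (\nu-1) + (g-1),
\]
which rearranges to $g \le e - \nu + 1$.

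For part (2) I would add the symmetry $\beta_i = \beta_{g-i}$ valid for $M$-pure symmetric $S$ (established in the discussion preceding Proposition \ref{beta stuff}); in particular $\beta_g = \beta_0 = 1$ and $\beta_{g-1} = \beta_1 = \nu-1$. The first step is to rule out small $g$ using the strict bounds $2 < \nu < e-1$. If $g = 1$ then $e = \beta_0 + \beta_1 = \nu$, forcing $\nu = e$; if $g = 2$ then $e = \beta_0 + \beta_1 + \beta_2 = \nu + 1$, forcing $\nu = e-1$. Both contradict $\nu < e-1$, so $g \ge 3$. With $g\ge 3$ the four indices $0,1,g-1,g$ are distinct, so the complementary range is $\{2,\dots,g-2\}$ and
\[
e = \beta_0 + \beta_1 + \beta_{g-1} + \beta_g + \sum_{i=2}^{g-2}\beta_i = 2\nu + \sum_{i=2}^{g-2}\beta_i \ge 2\nu + (g-3),
\]
the last inequality because each of the $g-3$ terms $\beta_i$ with $2\le i\le g-2$ is at least $1$. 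This rearranges to $g \le e - 2\nu + 3$.

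Finally, substituting $\nu = e - k$ gives $e - 2\nu + 3 = 2k - e + 3$, so the bound reads $g \le 2k - e + 3$; combined with $g \ge 3$ this yields $3 \le 2k - e + 3$, i.e. $e \le 2k$, while $\nu = e - k \ge 3$ (from $\nu > 2$) gives $e \ge k + 3$, completing the ``moreover'' clause. The only genuinely delicate point is establishing $g \ge 3$: the counting identity $e = 2\nu + \sum_{i=2}^{g-2}\beta_i$ depends on the indices $0,1,g-1,g$ being distinct, which fails exactly when $g\le 2$, and it is precisely the strict inequalities $2 < \nu < e-1$ that exclude those degenerate cases. Everything else is bookkeeping with the $\beta_i$, once the identifications $\beta_0 = 1$, $\beta_1 = \nu-1$ and the positivity $\beta_i\ge 1$ (via the chain through Lemma \ref{premaxrep}) are in hand.
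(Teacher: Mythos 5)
Your proof is correct and follows essentially the same route as the paper's: count the elements of $\Ap(S)$ according to their orders, using that every order $0,1,\dots,g(a_1)$ occurs, that the $\nu-1$ minimal generators $a_2,\dots,a_\nu$ all have order $1$, and (for part (2)) the symmetry $\beta_i=\beta_{g-i}$. You are in fact somewhat more careful than the paper, which asserts without argument both that every intermediate order is attained and that $g(a_1)\ge 3$ in case (2); your partial-sum argument via Lemma \ref{premaxrep} and your explicit elimination of $g\le 2$ supply exactly the details the paper leaves implicit.
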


\begin{proof} (1) We have $\#\Ap(S) = e$ and the orders of the elements of $\Ap(S)$ are the consecutive integers $0,1,\dots,g(a_1)$. Since the minimal generators all have order 1, $g(a_1) \le e-1 - (\nu-2) = e - \nu +1$.

(2) If $S$ is $M$-pure symmetric and $2<\nu<e-1$, then there are $\nu-1$ elements of $\Ap(S)$ with order $g(a_1)-1$ that are distinct from the minimal generators. Thus, similar to (1), $g(a_1) \le e-1 - 2(\nu-2) = e - 2\nu +3$. For the second statement notice that $2 < \nu = e-k$ and so $k+3 \le e$. Moreover, $g(a_1) \ge 3$. So $g(a_1) \le e-2\nu+3 \Rightarrow e \le 2k+3 -g(a_1) \le 2k$.
\end{proof}

\begin{lem}\label{almost max} If $S$ is of almost maximal embedding dimension, then the following are equivalent.
\begin{enumerate}
\item $S$ is $M$-pure.
\item $S$ is symmetric.
\item $S$ is $M$-symmetric.
\end{enumerate}
\end{lem}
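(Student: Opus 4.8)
The plan is to prove the three equivalences for a semigroup $S$ of almost maximal embedding dimension, i.e.\ $\nu = e-1$, by establishing the cycle of implications $(2)\Rightarrow(3)\Rightarrow(1)$ together with the easy $(1)\Rightarrow(2)$ or, more directly, by exploiting the structure of $\Ap(S)$ in this case. The key structural fact I would extract first is that when $\nu = e-1$, the Ap\'ery set $\Ap(S) = \{0, a_2, a_3, \dots, a_\nu, w\}$ contains exactly one element $w$ that is not $0$ and not a minimal generator, since $\#\Ap(S) = e$ and there are $\nu - 1 = e-2$ minimal generators other than $a_1$. This single ``extra'' element $w$ must satisfy $\ord(w) \ge 2$, and in fact $w$ is the unique element of $\maxap(S)$ whose order exceeds $1$ unless it is itself a minimal generator's successor.

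**Main line of argument.**
I would first handle $(1)\Rightarrow(2)$ and $(3)\Rightarrow(2)$ using results already in hand: $(3)\Rightarrow(2)$ because $M$-symmetric means $\gr_m(R)$ is Gorenstein (Definition \ref{m def}), which by Theorem \ref{equivs}(3) forces $S$ to be symmetric (hence also $M$-pure); and for $(1)\Rightarrow(2)$ I would argue directly from the structure above. Given $M$-purity, every element of $\maxap_M(S)$ has the same order; I would show that in the almost-maximal case $\maxap(S)$ must then be a singleton, which by Proposition \ref{psym}(3) gives symmetry. The crux is that the minimal generators $a_2,\dots,a_\nu$ all have order $1$, and $M$-purity forces the top of the $\pre_M$-order to be unique. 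Conversely, for $(2)\Rightarrow(3)$, assuming $S$ symmetric, I would invoke Proposition \ref{grcm}(6) (symmetric with $\nu = e-2$ gives $M$-additive) --- but here $\nu = e-1$, so instead I would use Proposition \ref{mpureadd}(3), which states precisely that a symmetric semigroup of almost maximal embedding dimension is $M$-pure and $M$-additive. Combined with symmetry, Corollary \ref{msymiffspa} then yields that $S$ is $M$-symmetric, giving $(2)\Rightarrow(3)$.

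**Closing the cycle and the expected obstacle.**
With $(2)\Leftrightarrow(3)$ secured via Proposition \ref{mpureadd}(3) and Corollary \ref{msymiffspa}, and with $(3)\Rightarrow(1)$ immediate (since $M$-symmetric implies $M$-pure by Corollary \ref{msymiffspa}), it remains only to close the loop with $(1)\Rightarrow(2)$. For this I expect the main obstacle to lie: showing that $M$-purity \emph{alone}, in the almost-maximal case, forces symmetry. The danger is that $M$-purity is a priori weaker than purity-plus-coincidence-of-$\maxap$ sets (Proposition \ref{mpiffp=}(3)), so I would need to rule out the possibility that $\maxap_M(S)$ contains the extra element $w$ together with a minimal generator of equal order. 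Concretely, if $S$ is $M$-pure and not symmetric, then $\#\maxap(S) \ge 2$ by Proposition \ref{psym}, and since $\maxap(S) \subset \maxap_M(S)$ with all elements of $\maxap_M(S)$ sharing a common order, I would derive a contradiction by counting: in the almost-maximal case there is only one non-generator in $\Ap(S)$, so $\maxap(S)$ having two elements would force two minimal generators into $\maxap(S)$, each of order $1$, whence $g(a_1) = 1$; but then $\Ap(S)$ consists only of $0$ and minimal generators, forcing $\nu = e$, contradicting $\nu = e-1$. This counting argument, carefully tracking which elements can be maximal, is the delicate step, and I would lean on Lemma \ref{maxminchar} to identify $\maxap(S)$ with $\{w : w-a_1 \in T\}$ to make it rigorous.
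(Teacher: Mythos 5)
Your proposal is correct in substance and reaches the same endpoints as the paper, but it routes the hard implication differently. The paper proves $(1)\Rightarrow(3)$ directly: since $\Ap(S)$ consists of $0$, the $e-2$ minimal generators (all of order $1$), and a single extra element $w$ of order $2$, $M$-purity forces $\maxap_M(S)=\{w\}$, and Proposition \ref{mpsym} then gives $M$-pure symmetry at once; $(3)\Rightarrow(2)$ is trivial and $(2)\Rightarrow(1)$ is Proposition \ref{mpureadd}(3). You instead prove $(1)\Rightarrow(2)$ by working with $\maxap(S)$ and Proposition \ref{psym}, and you close the cycle with $(2)\Rightarrow(3)$ via Proposition \ref{mpureadd}(3) plus Corollary \ref{msymiffspa} --- in fact your write-up is more careful than the paper's about the $M$-additivity needed to pass from ``$M$-pure symmetric'' to ``$M$-symmetric,'' which the paper leaves implicit. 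Both arguments rest on the same structural observation about $\Ap(S)$, so the difference is one of packaging rather than of ideas.

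One step of your $(1)\Rightarrow(2)$ argument is stated incorrectly, though the repair makes the proof easier, not harder. You claim that $\#\maxap(S)\ge 2$ ``would force two minimal generators into $\maxap(S)$.'' That is false: the extra element $w$ always lies in $\maxap(S)$, because it is the unique element of maximal order in $\Ap(S)$ (if $w+s\in\Ap(S)$ for some $0\ne s\in S$ then $\ord(w+s)\ge \ord(w)+1>2$, impossible). Hence $\#\maxap(S)\ge2$ forces $\maxap(S)$ to contain $w$ (order $2$) together with at least one minimal generator (order $1$), and since $\maxap(S)\subset\maxap_M(S)$ by Proposition \ref{mpiffp=}, this already contradicts $M$-purity without any appeal to $g(a_1)=1$ or to $\nu=e$. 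You should also justify, as the paper implicitly does, that the extra element has order exactly $2$ (if $\ord(w)=3$ with maximal representation $a_i+a_j+a_k$, then $a_i+a_j$ would be a second non-generator in $\Ap(S)$), although for your contradiction $\ord(w)\ge2$ suffices.
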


\begin{proof} (1) $\Rightarrow$ (3) Since all of the nonzero elements of $\Ap(S)$ are minimal generators (with order 1) with the exception of one element $w$ with order 2, $\maxap_M(S) = \{w\}$. Therefore by Proposition \ref{mpsym}, $S$ is $M$-symmetric.

(3) $\Rightarrow$ (2) Clear.

(2) $\Rightarrow$ (1) Proposition \ref{mpureadd}.
\end{proof}

\begin{lem}\label{symax embed} A semigroup $S$ is symmetric and of maximal embedding if and only if $e=2$.
\end{lem}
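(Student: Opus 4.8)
The plan is to prove both implications using the known description of the Apéry set in the maximal embedding dimension case together with the characterization of symmetry through $\maxap(S)$ given in Proposition \ref{psym}.

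For the forward direction, assume $S$ is symmetric and of maximal embedding dimension, so that $\nu = e$. As recorded in the proof of Proposition \ref{mpureadd}(2), maximal embedding dimension forces $\Ap(S) = \{0, a_2, a_3, \dots, a_\nu\}$. I would then identify $\maxap(S)$. The key elementary observation is that two distinct minimal generators are incomparable under $\pre$: if $a_i \pre a_j$ with $i \ne j$ and $a_i < a_j$, then $a_j - a_i$ is a nonzero element of $S$ and the relation $a_j = a_i + (a_j - a_i)$ contradicts the minimality of $a_j$ as a generator. Hence every nonzero element of $\Ap(S)$ is maximal, giving $\maxap(S) = \{a_2, \dots, a_\nu\}$ and $\#\maxap(S) = \nu - 1$. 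By Proposition \ref{psym} (the equivalence of (1) and (3)), symmetry is equivalent to $\#\maxap(S) = 1$, so $\nu - 1 = 1$, that is $\nu = 2$. Since $\nu = e$, this yields $e = 2$.

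For the converse, suppose $e = 2$. Then $a_1 = e = 2$, and because $\nu \le e$ always holds while $\nu \ge 2$ (as $R \ne k[[x]]$), we conclude $\nu = 2 = e$; thus $S$ has maximal embedding dimension. It remains to verify symmetry. Here $S = \langle 2, a_2 \rangle$ with $a_2$ odd, and by Proposition \ref{mpureadd}(1) its Apéry set is $\Ap(S) = \{0, a_2, 2a_2, \dots, (a_1 - 1)a_2\} = \{0, a_2\}$. Consequently $\maxap(S) = \{a_2\}$ has a single element, so $S$ is symmetric by Proposition \ref{psym}.

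I expect no serious obstacle here; the only point requiring a moment of care is the identification of $\maxap(S)$ in the maximal embedding dimension case, which reduces to the incomparability of distinct minimal generators noted above. Everything else is a direct bookkeeping of the cited descriptions of the Apéry set and the inequality $\nu \le e$ relating embedding dimension and multiplicity.
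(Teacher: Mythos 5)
Your proof is correct and follows essentially the same route as the paper: the paper's argument is precisely that symmetry forces $\#\maxap(S)=1$ (Proposition \ref{psym}) while maximal embedding dimension forces $\#\maxap(S)=e-1$, whence $e=2$, with the converse declared clear. You simply supply the details the paper leaves implicit — the incomparability of distinct minimal generators giving $\maxap(S)=\{a_2,\dots,a_\nu\}$, and the explicit verification for $S=\langle 2,a_2\rangle$ — both of which are sound.
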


\begin{proof} Let $S$ by symmetric and of maximal embedding. By Proposition \ref{psym}, $\#\maxap(S) = 1$. On the other hand, if $S$ is of maximal embedding dimension, then $\#\maxap(S) = e-1$. Thus $e=2$. The converse is clear.
\end{proof}

\begin{lem}\label{3lem} Let $S$ be a symmetric semigroup with embedding dimension 3.
\begin{enumerate}
\item If $e=5$, then $S$ is not $M$-pure.
\item If $e=6$, then $S$ is $M$-pure.
\item If $e=7$, then $S$ is not $M$-pure.
\end{enumerate}
\end{lem}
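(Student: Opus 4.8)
The plan is to move everything into the combinatorics of the Apéry set. Write $\beta_i=\#\{w\in\Ap(S)\st\ord(w)=i\}$ and $g=g(a_1)=\max\{\ord(w)\st w\in\Ap(S)\}$ (Proposition \ref{gAp}). Since $S$ is symmetric in all three parts, Proposition \ref{beta stuff}(2) reduces the question to whether the $\beta$-vector is symmetric: $S$ is $M$-pure if and only if $\beta_i=\beta_{g-i}$ for all $i$. I would first record the rigidity available in embedding dimension $3$: $\beta_0=1$; $\beta_1=\nu-1=2$ (the order-one Apéry elements are exactly the generators $a_2,a_3$); $\beta_g=1$ (from Proposition \ref{beta stuff}(1) with $i=0$, $\beta_g\le\beta_0$); $\beta_j\ge 1$ for $0\le j\le g$ (shrink a maximal representation one generator at a time, using Lemma \ref{premaxrep}); and $\sum_{j=0}^g\beta_j=e$.

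The key device I would introduce is a box bound. Let $w_{e-1}$ be the unique $\pre$-maximal Apéry element (Proposition \ref{psym}), and fix a maximal representation $w_{e-1}=c_2a_2+c_3a_3$ (no $a_1$ occurs, as $w_{e-1}\in\Ap(S)$), so $g=c_2+c_3$. By Lemma \ref{premaxrep} every $ia_2+ja_3$ with $0\le i\le c_2$, $0\le j\le c_3$ lies in $\Ap(S)$ and has order exactly $i+j$; these $(c_2+1)(c_3+1)$ lattice points yield distinct semigroup elements (two points of equal order would force $a_2=a_3$, and two of different order are separated by their order). Hence $(c_2+1)(c_3+1)\le e$, which bounds $g$ tightly once $e$ is fixed. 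Moreover, by Proposition \ref{psym}(5) every Apéry element satisfies $w\pre w_{e-1}$, so each $w$ has a "mirror'' $w_{e-1}-w\in\Ap(S)$.

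For part (1) I would simply invoke Lemma \ref{g and ev}(2): an $M$-pure symmetric semigroup with $2<\nu<e-1$ and $\nu=e-k$ satisfies $e\le 2k$; here $\nu=3$, $e=5$, $k=2$ gives $5\le 4$, a contradiction, so $S$ is not $M$-pure. For part (2) the counting gives $g\ge 3$ and $g\le 4$; if $g=4$ then $\beta=(1,2,1,1,1)$, and the box bound forces $w_{e-1}=4b$ for a single generator $b$ (the factorizations $(3,1),(2,2)$ give boxes of size $8,9>6$), leaving the other generator $c$ as the sixth Apéry element. Its mirror $4b-c$ must then coincide with one of $\{0,b,2b,3b,4b,c\}$, each case making $c$ a proper multiple of $b$ — impossible. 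Hence $g=3$, $\beta=(1,2,2,1)$ is symmetric, and $S$ is $M$-pure by Proposition \ref{beta stuff}(2). For part (3), Lemma \ref{g and ev}(2) bounds $g\le e-2\nu+3=4$ and counting rules out $g\le 3$ for $M$-purity, so the only candidate is $g=4$, $\beta=(1,2,1,2,1)$. Again the box forces $w_{e-1}=4b$, the two remaining Apéry elements are $c$ and $4b-c$, and $M$-purity would require $\ord(4b-c)=3$; but $4b-c\ne w_{e-1}$ and $\beta_4=1$ give $\ord(4b-c)\le 3$, while the four equations $ib+jc=4b-c$ with $i+j=3$ all force $b=c$, so in fact $\ord(4b-c)\le 2$. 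Thus $\ord(c)+\ord(4b-c)\le 3<4=\ord(w_{e-1})$, i.e.\ $c\not\pre_M w_{e-1}$, and $S$ is not $M$-pure by Proposition \ref{mpsym}(5).

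The main obstacle is the structural step shared by parts (2) and (3): once the elementary counting allows a $\beta$-vector of symmetric shape, symmetry of $\beta$ is only necessary, and I must eliminate the spurious "chain'' configurations in which $w_{e-1}$ is a pure power $4b$ of one generator. The box lemma is exactly what makes this manageable; the two delicate points are verifying that the box points are genuinely distinct (so that $(c_2+1)(c_3+1)\le e$ is a legitimate bound) and showing that the forced mirror element $4b-c$ cannot carry the order that $M$-purity would demand.
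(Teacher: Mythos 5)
Your proof is correct, and it follows the same broad strategy as the paper (bound $g=g(a_1)$, analyze the maximal representation of $w_{e-1}$, exploit the symmetry $w\mapsto w_{e-1}-w$ of $\Ap(S)$), but it reorganizes the case analysis in a genuinely cleaner way. The paper proves (2) by listing the five factorizations $4a_2,\,3a_2+a_3,\,2a_2+2a_3,\,a_2+3a_3,\,4a_3$ of $w_{e-1}$ and filling in the Ap\'ery set by hand in each case, and proves (3) by enumerating all possible Ap\'ery sets for $e=7$ (including a $g=5$ configuration) and computing the orders directly. Your box bound $(c_2+1)(c_3+1)\le e$ --- justified correctly, since Lemma \ref{premaxrep} puts all $ia_2+ja_3$ with $0\le i\le c_2$, $0\le j\le c_3$ into $\Ap(S)$ with order exactly $i+j$, and your distinctness check is sound --- kills the mixed factorizations uniformly (box sizes $8,9,8$ exceed both $6$ and $7$), and the mirror argument ($w_{e-1}-c\in\Ap(S)$ forcing $c$ to be a multiple of $b$) is exactly the paper's ``$2a_2+a_3=4a_2$'' observation made systematic. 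For (3) your contradiction framing (assume $M$-purity, force $\beta=(1,2,1,2,1)$ and $w_{e-1}=4b$, then show $\ord(4b-c)\ne 3$ by checking the four representations $ib+jc$ with $i+j=3$, all of which force $b=c$) avoids the paper's full enumeration; the one point you rightly flag and correctly handle is that a maximal representation of the Ap\'ery element $4b-c$ cannot involve $a_1$, so the four cases are exhaustive. Part (1) is identical to the paper's. What your approach buys is a reusable counting lemma and a shorter argument for (3); what the paper's buys is an explicit list of the offending semigroups (e.g.\ $\langle 7,9,12\rangle$, $\langle 7,8,12\rangle$, $\langle 7,8,20\rangle$), which it reuses in Examples \ref{8exs}.
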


\begin{proof} (1) This follows from Lemma \ref{g and ev}(2).

(2) $3\le \ord(w_{e-1}) =g(a_1)\le 4$ by Propositions \ref{psym}, \ref{gAp}, and Lemma \ref{g and ev}(1). Suppose that $\ord(w_{e-1}) = 4$. Then $w_{e-1}$ is equal to $4a_2$, $3a_2 + a_3$, $2a_2+2a_3$, $a_2+3a_3$, or $4a_3$. All of these cases lead to a contradiction. If $w_{e-1}=4a_2$, then $\Ap(S) = \{0,a_2,\underline{\ \ },\underline{\ \ },3a_2,4a_2\}$ where $a_3$ and $2a_2$ fill the blanks. But then $2a_2 + a_3 = 4a_2 \Rightarrow a_3 = 2a_2$. If $w_{e-1}=3a_2+a_3$, then $\Ap(S) = \{0,a_2,\underline{\ \ },\underline{\ \ },2a_2+a_3,3a_2+a_3\}$ where $a_3$, $2a_2$, and $a_2+a_3$ fill the blanks. If $w_{e-1}=2a_2+2a_3$, then $\Ap(S) = \{0,a_2,\underline{\ \ },\underline{\ \ },a_2+2a_3,2a_2+2a_3\}$ where $a_3$, $2a_3$, and $a_2+a_3$ fill the blanks. If $w_{e-1}=a_2+3a_3$, then $\Ap(S) = \{0,a_2,\underline{\ \ },\underline{\ \ },3a_3,a_2+3a_3\}$ where $a_3$, $2a_3$, $a_2+a_3$, and $a_2+2a_3$ fill the blanks. Lastly if $w_{e-1}=4a_3$, then $\Ap(S) = \{0,a_2,a_3,2a_3,3a_3,4a_3\}$. But then $2a_2 \equiv 4a_3 \mod 6$ and $2a_2 < 4a_3$. So we have $\ord(w_{e-1}) = 3$ and the orders of the elements of $\Ap(S)$ are $0,1,1,2,2,3$ (not necessarily in that order). By Proposition \ref{beta stuff} $S$ is $M$-pure.

(3) The Ap\'ery set of $S$ can be either $\{0,a_2,a_3,2a_2,2a_3,3a_2,4a_2=3a_3\}$ ($<7,9,12>$) $\{0,a_2,a_3,2a_2,a_2+a_3,3a_2=2a_3,4a_2=a_2+2a_3\}$ ($<7,8,12>)$, or $\{0,a_2,2a_2,a_3,3a_2,4a_2,5a_2=2a_3\}$ ($<7,8,20>$). The orders of the elements are $\{0,1,1,2,2,3,4\}$, $\{0,1,1,2,2,3,4\}$, $\{0,1,2,1,3,4,5\}$ respectively. By Proposition \ref{beta stuff}, $S$ is not $M$-pure.

To see that these are the only possibilities, Let $\Ap(S) = \{0,a_2,\underline{\ \ },\underline{\ \ },\underline{\ \ },\underline{\ \ },\underline{\ \ }\}$. If $a_3$ fills any of the last three blanks, then $\Ap(S) = \{0,a_2,2a_2,3a_2,4a_2,5a_2,6a_2\}$. So $a_3$ is a multiple of $a_2$, which is a contradiction. The only other possibility that requires some justification to exclude is $\Ap(S) = \{0,a_2,a_3,a_2+a_3,2a_2+a_3,a_2+2a_3,2a_2+2a_3\}$. Since $2a_2 \not\in \Ap(S)$, $2a_2 \equiv a_3 \mod 7$, and since $2a_3 \not\in \Ap(S)$, $2a_3 \equiv a_2 \mod 7$. Thus $a_2 \equiv 4a_2 \mod 7$, which implies that $a_2 \equiv 0 \mod 7$. This is a contradiction.
\end{proof}

\begin{ques} Let $S$ be a symmetric semigroup with embedding dimension 3, in which case $e\ge 4$. It can be shown by combining Proposition \ref{mpureadd}, Lemma \ref{symax embed} and Lemma \ref{3lem} that if $4 \le e\le 7$, then $S$ is $M$-pure if and only if $e$ is even. Does this hold for all $e\ge 4$?
\end{ques}

The next lemma is an immediate consequence of \citep[Theorem 3.14]{B}.

\begin{lem}\label{bar and fro} Let $a_2=a_1+1$. Then $S$ is $M$-additive if and only if $0\le w_i - a_1\ord(w_i)\le e-1$ for all $0\le i\le e-1$.
\end{lem}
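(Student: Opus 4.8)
The plan is to first observe that both displayed inequalities collapse to a single, more transparent condition, and then prove each direction of the resulting equivalence, with the arithmetic hypothesis $a_2 = a_1 + 1$ entering only in one of the two implications.

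First I would dispose of the lower bound and recast the upper bound. Since every generator satisfies $a_i \ge a_1$, any maximal representation $w_i = \sum c_j a_j$ gives $w_i \ge a_1 \sum c_j = a_1 \ord(w_i)$, so $w_i - a_1 \ord(w_i) \ge 0$ holds unconditionally. On the other hand, Lemma \ref{about ord}(2) with $j' = 1$ gives $\ord(w_i) \le \lfloor w_i / a_1\rfloor$ for every $w_i$, again unconditionally. Writing $w_i = q a_1 + r$ with $q = \lfloor w_i/a_1\rfloor$ and $0 \le r \le a_1 - 1$, one sees that $w_i - a_1 \ord(w_i) = a_1(q - \ord(w_i)) + r$, so (given $\ord(w_i) \le q$) the bound $w_i - a_1\ord(w_i) \le e - 1 = a_1 - 1$ holds precisely when $\ord(w_i) = q$. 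Hence the whole condition is equivalent to $\ord(w) = \lfloor w/a_1\rfloor$ for every $w \in \Ap(S)$, and I would prove the lemma in this form.

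For the easy direction, suppose $\ord(w) = \lfloor w/a_1\rfloor$ for all $w \in \Ap(S)$. For any such $w$ and any $k \ge 0$, stacking $k$ copies of $a_1$ onto a maximal representation of $w$ gives $\ord(w + ka_1) \ge \ord(w) + k$, while Lemma \ref{about ord}(2) yields $\ord(w + k a_1) \le \lfloor (w + ka_1)/a_1\rfloor = \lfloor w/a_1\rfloor + k = \ord(w) + k$. Thus $\ord(w + ka_1) = \ord(w) + k$, and $S$ is $M$-additive by Definition \ref{m def}(1) and Lemma \ref{garcia}. Notice this implication never uses $a_2 = a_1 + 1$. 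The content of the lemma is the reverse implication, and here the hypothesis is essential. Assume $S$ is $M$-additive and fix $w \in \Ap(S)$, say $w = v_r$ with $0 \le r \le a_1 - 1$, and set $q = \lfloor w/a_1\rfloor$. The key computation is that $r a_2 = r(a_1 + 1) = r a_1 + r$ lies in $S$ and is $\equiv r \pmod{a_1}$; since $v_r$ is the least element of $S$ in its residue class, $w \le r a_2$, which forces $q \le r$ and $r a_2 = w + (r - q) a_1$ with $r - q \ge 0$. Moreover $\ord(r a_2) = r$: the representation $r a_2 = a_2 + \cdots + a_2$ gives $\ord(r a_2) \ge r$, while Lemma \ref{about ord}(2) gives $\ord(r a_2) \le \lfloor r(a_1+1)/a_1\rfloor = r$ because $0 \le r < a_1$. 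Applying the $M$-additive equality of Lemma \ref{garcia} to $r a_2 = w + (r-q)a_1$ gives $r = \ord(r a_2) = \ord(w) + (r - q)$, hence $\ord(w) = q = \lfloor w/a_1\rfloor$, as desired.

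The main obstacle is isolating the role of $a_2 = a_1 + 1$: it is exactly what makes $r a_2$ congruent to $w$ modulo $a_1$ with the small defect $r - q$, and what pins $\ord(r a_2)$ down to $r$; without it (for instance $S = \langle 4,6,7,9\rangle$, which is $M$-additive of maximal embedding dimension but has $\ord(9) = 1 \ne 2 = \lfloor 9/4\rfloor$) the element $r a_2$ need not dominate $w$ and the conclusion fails. I would close by remarking that this reproduces, in the special case $a_2 = a_1 + 1$, the general characterization of $M$-additivity in \citep[Theorem 3.14]{B}, from which the lemma also follows at once.
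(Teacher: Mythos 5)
Your proof is correct, and it is genuinely different in character from what the paper does: the paper offers no argument at all, simply declaring the lemma ``an immediate consequence of \citep[Theorem 3.14]{B}'' (Barucci's general criterion for the Cohen--Macaulayness of $\gr_m(R)$ in terms of the Ap\'ery set), whereas you give a complete self-contained proof from the ingredients already in the paper. Your reduction of the two-sided inequality to the single condition $\ord(w)=\lfloor w/a_1\rfloor$ for all $w\in\Ap(S)$ is clean and correct (the lower bound is automatic from $a_j\ge a_1$, and Lemma \ref{about ord}(2) with $j'=1$ forces $\ord(w_i)\le\lfloor w_i/a_1\rfloor$, so the upper bound pins down equality); the forward direction via Lemma \ref{garcia}(4) is routine; and the converse correctly exploits $a_2=a_1+1$ through the element $ra_2=ra_1+r\equiv r\pmod{a_1}$, which dominates $v_r$ and has order exactly $r$, so that $M$-additivity transfers the known order of $ra_2$ down to $v_r$. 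The example $S=\langle 4,6,7,9\rangle$ showing the necessity of the hypothesis, and the consistency check against \citep[Theorem 3.14]{B}, are both apt. What your approach buys is transparency and independence from the external reference; what the paper's approach buys is brevity, since Barucci's theorem already packages the equivalence in a form from which the stated inequality drops out after the same arithmetic normalization you perform at the start.
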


\begin{exs}\label{8exs} These examples show that in general there are no implications among the conditions symmetry, $M$-purity, and $M$-additivity. In addition each example has the least multiplicity and embedding dimension possible.
\begin{enumerate}
\item $S=<2,3>$ is symmetric, $M$-pure, and $M$-additive
\item $S=<3,4,5>$ $M$-pure and $M$-additive, but not symmetric
\item $S=<4,5,7>$ is $M$-additive, but not symmetric or $M$-pure
\item $S=<4,5,11>$ is not symmetric, $M$-pure, or $M$-additive
\item $S=<5,6,9>$ is symmetric and $M$-additive, but not $M$-pure
\item $S=<5,6,13>$ is $M$-pure, but not symmetric or $M$-additive
\item $S=<6,7,15>$ is symmetric and $M$-pure, but not $M$-additive
\item $S=<7,8,20>$ is symmetric, but not $M$-pure or $M$-additive
\end{enumerate}
\end{exs}

\begin{proof} That the embedding dimension cannot be lowered is clear for all the examples by Corollary \ref{grg}.

(1) Corollaries \ref{msymiffspa} and \ref{grg}. That the multiplicity is minimal is clear.

(2) Proposition \ref{mpureadd} and Lemma \ref{symax embed}. The multiplicity is minimal since every semigroup with $e=2$ is $M$-symmetric by Corollary \ref{grg}.

(3) It is easy to check that $S$ is not symmetric, so we apply Lemmas \ref{almost max} and \ref{bar and fro}. The multiplicity is minimal since every semigroup with $2\le e\le 3$ is $M$-pure by Proposition \ref{mpureadd}.

(4) Same as (3).

(5) It is easy to check that $S$ is symmetric, so we apply Proposition \ref{grcm} and Lemma \ref{3lem}. The multiplicity is minimal since every symmetric semigroup with $2\le e\le 4$ is $M$-pure by Proposition \ref{mpureadd}.

(6) It is easy to check that $S$ is $M$-pure, but not symmetric. By Lemma \ref{bar and fro} $S$ is not $M$-additive. The multiplicity is minimal since every $M$-pure semigroup with $2\le e\le 4$ is $M$-additive by Proposition \ref{mpureadd}, using Lemma \ref{almost max} as well for the case $S=<4,a_2,a_3>$.

(7) It is easy to check that $S$ is symmetric, so we apply Lemmas \ref{3lem} and \ref{bar and fro}. The multiplicity is minimal since every $M$-pure symmetric semigroup with $2\le e\le 5$ is $M$-additive by Proposition \ref{mpureadd}, using Proposition \ref{grcm} as well for the case $S=<5,a_2,a_3>$.

(8) It is easy to check that $S$ is symmetric, so we apply Lemmas \ref{3lem} and \ref{bar and fro}. The multiplicity is minimal since every symmetric semigroup with $2\le e\le 6$ is either $M$-pure or $M$-additive by Proposition \ref{grcm}, using Lemma \ref{3lem} for the case $S=<6,a_2,a_3>$.
\end{proof}


\section{The Minimal Goto Number of a Numerical Semigroup Ring}\label{minsect}

Now we consider the minimal Goto number of a semigroup $S$, which is denoted by $\tau$. Recall that $\tau$ is also the minimal Goto number of $R=k[[S]]$ \citep[Theorem 4.1]{HS}. An open problem is to determine when $\tau = g(a_i)$ for some minimal generator $a_i$ of $S$. In particular, when is $\tau = g(a_1)$? It was shown by \citet[Theorem 5.10]{HS} that $g(a_1) = \tau$ if $\nu =2$. On the other hand, $S =<7,11,20>$ and $S=<11,14,21>$ are examples due to \citet{S} of semigroups for which $\tau < g(a_i)$ for all $i$. Notice that the latter is symmetric and $M$-additive.

\begin{ddef} Let $S$ be a semigroup with $\Ap(S) = \{w_0,w_1,\dots,w_{e-1}\}$.
\begin{eqnarray*}
\delta_i &=& \max\{\ord(w_i) + \ord(w) \st w\in S \text{ and } w_i + w\in \Ap(S)\}\\
\gamma_i &=& \max\{\ord(w_i) + \floor{w} \st w\in S \text{ and } w_i + w\in \Ap(S)\}\\
\delta &=& \min\{\delta_i \st 0\le i\le e-1\}\\
\gamma &=& \min\{\gamma_i \st 0\le i\le e-1\}
\end{eqnarray*}
\end{ddef}

For a semigroup $S$ with corresponding ring $(R,m)=k[[S]]$, one has the inequalities
\begin{equation}\label{ineq} \tag{$\ast$}
\delta \le \gamma \le \ord(C) \le \tau \le g(a_1) \le r
\end{equation}

\noindent where $\ord(C) = \min\{\ord(t) \st 0\ne t\in C\}=\min\{\ord(f+1),\dots,\ord(f+a_1)\}$ is the $m$-adic order of $C=R:k[[x]]$ and $r = r(m)$ is the reduction number $m$ (with respect to $x^{a_1}R$).

Indeed by Lemma \ref{about ord}, $\delta \le \gamma$. Now consider $\gamma \le \ord(C)$. For $1\le \alpha\le a_1$ we have $f+\alpha = w_i +ka_1$ for some $w_i \in \Ap(S)$ and $k\ge 0$. Let $w_j\in S$ such that $w_i +w_j =w_n\in \Ap(S)$ and $\ord(w_i) + \sfloor{w_j}\ge \gamma$. Then $f+\alpha + w_j = w_n + ka_1$. There are two cases.

First if $f<w_n$, then $w_n = f+\beta$ for some $1\le \beta\le a_1$. We set $\beta' = a_1-\beta$. Then we have $\alpha +w_j +\beta' = (k + 1)a_1$. It follows that $\alpha +w_j - a_1\sfloor{w_j} +\beta' = (k+1-\sfloor{w_j})a_1$ is a positive multiple of $a_1$, and so $k\ge \sfloor{w_j}$. Now we have $\ord(f+\alpha) \ge \ord(w_i) + k \ge\ord(w_i) + \sfloor{w_j} \ge \gamma$.

The second case is when $f> w_n$. We have $f+\alpha + w_j + v_{-\wh n} = (1+\sfloor{w_n} +\sfloor{v_{-\wh n}} + k)a_1$ by Lemma \ref{apnot} (Recall Definition \ref{biject}). It follows that $f-a_1\sfloor{f} +\alpha + w_j-a_1\sfloor{w_j} + v_{-\wh n}-a_1\sfloor{v_{-\wh{n}}}= (1+\sfloor{w_n} + k-\sfloor{f}-\sfloor{w_j})a_1$ is a positive multiple of $a_1$, and so $k  \ge \sfloor{w_j} + \sfloor{f}-\sfloor{w_n}$. Now we have $\ord(f+\alpha) \ge \ord(w_i) + k \ge \ord(w_i) + \sfloor{w_j} + \sfloor{f}-\sfloor{w_n}\ge \ord(w_i) + \sfloor{w_j} \ge \gamma$. This establishes the second inequality.

Now consider $\ord(C) \le \tau$. If $(\sigma(1),\dots,\sigma(a_1))$ is the Goto vector of $S$, then $\ord(f+\alpha) \le \sigma(\alpha)$ for $1\le \alpha \le a_1$. Thus $\ord(C)=\min\{\ord(f+1),\dots,\ord(f+a_1)\} \le \min\{\sigma(1),\dots,\sigma(a_1)\} = \tau$, and we have the third inequality.

The fourth inequality $\tau \le g(a_1)$ is trivial, and the last inequality $g(a_1) \le r$ follows easily from Definition \ref{goto def} since $x^{a_1}R:m^{r+1} = R$. The following examples show that all of these inequalities can be strict.

\begin{exs} \
\begin{enumerate}
\item Let $S=<5,8,12>$, then $\delta = 2$ and $\gamma = \ord(C) = \tau = g(a_1) = r = 3$.
\item Let $S=<4,7,9>$, then $\delta = \gamma=1$ and $\ord(C) = \tau = g(a_1) = r = 2$.
\item Let $S=<5,6,14>$, then $\delta = \gamma= \ord(\mathcal C) = 1$, $\tau=2$, $g(a_1)=3$, and $r=4$.
\end{enumerate}
\end{exs}

The next proposition gives sufficient conditions for some of the inequalities in (\ref{ineq}) to be equalities.

\begin{prop}\label{someeq} Let $S$ be a semigroup and $\Ap(S) = \{w_0,w_1\dots,w_{e-1}\}$.
\begin{enumerate}
\item If $S$ is symmetric, then $\ord(C) = \tau$.
\item If $S$ is $M$-additive, then $g(a_1)=r$.
\item If $S$ is $M$-additive symmetric, then $\gamma = \ord(C) =\tau\le g(a_1) = r$.
\item If $S$ is symmetric, then $\ord(w_i) = \sfloor{w_i}$ for some $w_i \in\Ap(S)$ such that $\delta_{e-1-i}=\delta$ $\iff$ $\delta = \gamma$.
\end{enumerate}
\end{prop}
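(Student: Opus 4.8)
The plan is to dispatch (1) and (2) by quoting earlier results, reduce (3) to a single new equality, and reserve the real effort for the equivalence in (4). For (1), since $S$ is symmetric, Proposition \ref{consym}(3) gives $\tau = \min\{\ord(f+1),\dots,\ord(f+a_1)\}$, which is exactly $\ord(C)$ by definition; hence $\ord(C)=\tau$. For (2), being $M$-additive means $\gr_m(R)$ is Cohen--Macaulay (Definition \ref{m def}), so $g(a_1)=r$ is precisely Theorem \ref{equivs}(2).

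For (3), combining (1) and (2) with the trivial inequality $\tau\le g(a_1)$ from (\ref{ineq}) leaves only $\gamma=\ord(C)$ to prove, and since (\ref{ineq}) already gives $\gamma\le\ord(C)$, I would establish the reverse. The key preliminary observation, also needed in (4), is that for symmetric $S$ one has the closed forms
$$\delta_i = \ord(w_i)+\ord(w_{e-1-i}),\qquad \gamma_i = \ord(w_i)+\floor{w_{e-1-i}},$$
because any $w\in S$ with $w_i+w\in\Ap(S)$ satisfies $w\pre w_{e-1-i}$ (from $w_i+w\pre w_{e-1}$ by Proposition \ref{psym}(5) and $w_{e-1}-w_i=w_{e-1-i}$ by Proposition \ref{psym}(4)), so $\ord(w)\le\ord(w_{e-1-i})$ and $\floor{w}\le\floor{w_{e-1-i}}$, with both maxima attained at $w=w_{e-1-i}$. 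Granting this, I would pick $i_0$ with $\gamma=\gamma_{i_0}$ and choose $\alpha\in\{1,\dots,a_1\}$ with $f+\alpha\equiv w_{i_0}\pmod{a_1}$; then $f+\alpha>f$ lies in $S$, forcing $f+\alpha=w_{i_0}+ka_1$ with $k\ge0$, while $f+\alpha\le f+a_1=w_{e-1}=w_{i_0}+w_{e-1-i_0}$ forces $k\le\floor{w_{e-1-i_0}}$. Now $M$-additivity (Lemma \ref{garcia}) gives $\ord(f+\alpha)=\ord(w_{i_0})+k\le\ord(w_{i_0})+\floor{w_{e-1-i_0}}=\gamma_{i_0}=\gamma$, so $\ord(C)\le\ord(f+\alpha)\le\gamma$.

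The substance is (4). From the closed forms I would first record $\delta_i=\delta_{e-1-i}$ (so the hypothesis $\delta_{e-1-i}=\delta$ coincides with $\delta_i=\delta$) and $\delta_i\le\gamma_i$ with gap exactly $\floor{w_{e-1-i}}-\ord(w_{e-1-i})$. For ($\Leftarrow$), given $i$ with $\ord(w_i)=\floor{w_i}$ and $\delta_{e-1-i}=\delta$, I would compute $\gamma_{e-1-i}=\ord(w_{e-1-i})+\floor{w_i}=\ord(w_{e-1-i})+\ord(w_i)=\delta_{e-1-i}=\delta$; since $\gamma\le\gamma_{e-1-i}$ and always $\delta\le\gamma$, this pins $\delta=\gamma$. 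For ($\Rightarrow$), assuming $\delta=\gamma$, I would take $j$ with $\gamma=\gamma_j$ and squeeze
$$\delta\le\delta_j=\ord(w_j)+\ord(w_{e-1-j})\le\ord(w_j)+\floor{w_{e-1-j}}=\gamma_j=\gamma=\delta,$$
so equality throughout yields $\delta_j=\delta$ and $\ord(w_{e-1-j})=\floor{w_{e-1-j}}$; then the index $i:=e-1-j$ satisfies $\ord(w_i)=\floor{w_i}$ and $\delta_{e-1-i}=\delta_j=\delta$, which is the asserted condition.

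The main obstacle is the preliminary reduction to the closed forms for $\delta_i$ and $\gamma_i$ in the symmetric case; once the maximizing element is identified as $w_{e-1-i}$ (via $w\pre w'\Rightarrow\ord(w)\le\ord(w')$ together with the symmetry relations of Proposition \ref{psym}), both (3) and (4) collapse to short algebraic manipulations. I would take care in (3) to check $f+\alpha\in S$ (so $M$-additivity applies) and $k\ge0$, and in (4) to keep the involution $i\leftrightarrow e-1-i$ straight when translating between the index achieving the minimum and the index named in the condition.
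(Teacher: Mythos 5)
Your proposal is correct, and parts (1), (2), and (4) match the paper: (1) is exactly the citation of Proposition \ref{consym}(3), (2) is Theorem \ref{equivs}(2), and for (4) the paper runs the same two squeezes you do, merely leaving implicit the closed forms $\delta_i=\ord(w_i)+\ord(w_{e-1-i})$ and $\gamma_i=\ord(w_i)+\sfloor{w_{e-1-i}}$ that you take the trouble to justify via $w\pre w_{e-1-i}$. (Your $(\Leftarrow)$/$(\Rightarrow)$ labels are swapped relative to the ``iff'' as written, but both directions are there.) The genuine divergence is in (3): the paper simply defers it to Corollary \ref{gam=tau} in Section 5, whose proof goes through the general Ap\'ery-set formula for Goto numbers (Proposition \ref{gu5} applied to $\tau=g(f+a_1+1)$, in the $v_{\wh\jmath}$ notation), whereas you prove the missing inequality $\ord(C)\le\gamma$ directly: pick $i_0$ with $\gamma=\gamma_{i_0}$, write $f+\alpha=w_{i_0}+ka_1$ with $0\le k\le\sfloor{w_{e-1-i_0}}$ (the upper bound coming from $f+\alpha\le f+a_1=w_{i_0}+w_{e-1-i_0}$), and invoke Lemma \ref{garcia} to get $\ord(f+\alpha)=\ord(w_{i_0})+k\le\gamma_{i_0}$. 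Your argument is self-contained within Sections 2--4 and eliminates the paper's forward reference; the paper's route costs nothing extra once the Section 5 machinery is in place and yields Corollary \ref{gam=tau} as a byproduct of the explicit Goto-number formulas. Both are valid.
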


\begin{proof} (1) follows from Proposition \ref{consym}, and (2) follows from Theorem \ref{equivs}(2). (3) is proven in Corollary \ref{gam=tau} in the next section. For (4), we have $\gamma \le \ord(w_{e-1-i})+\sfloor{w_i} = \ord(w_{e-1-i})+\ord(w_i) =\delta$. Thus $\delta = \gamma$. Conversely, there exists $w_i \in\Ap(S)$ such that $\delta \le \ord(w_{e-1-i})+\ord(w_i) \stackrel{\ref{about ord}}{\le} \ord(w_{e-1-i})+\sfloor{w_i} = \delta$. Thus $\ord(w_i)= \sfloor{w_i}$ and $\delta_{e-1-i}=\delta$.
\end{proof}

The integer $\delta$ was introduced because it provides a lower bound for the minimal Goto number of $S$ (in fact for the $m$-adic order of the conductor ideal $C$), and because of its connection with the $M$-purity of $S$. The latter is the content of the next proposition.

\begin{thm}\label{puredel} Let $S$ be a semigroup.
\begin{enumerate}
\item $S$ is $M$-pure if and only if $\delta = g(a_1)$.
\item $S$ is $M$-pure and $M$-additive if and only if $\delta = r$.
\item $S$ is $M$-symmetric if and only if $S$ is symmetric and $\delta =r$.
\end{enumerate}
\end{thm}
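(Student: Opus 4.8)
The plan is to prove (1) directly from the definitions and then obtain (2) and (3) formally, using results already established. The elementary fact underlying everything is that $\delta_i \le g(a_1)$ for every $w_i \in \Ap(S)$: if $w_i + w \in \Ap(S)$ then $\ord(w_i) + \ord(w) \le \ord(w_i + w) \le g(a_1)$ by Proposition \ref{gAp}. Since $w_0 = 0$ gives $\delta_0 = \max\{\ord(w) : w \in \Ap(S)\} = g(a_1)$, we always have $\delta \le g(a_1)$, and $\delta = g(a_1)$ holds precisely when $\delta_i = g(a_1)$ for every $i$.

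For (1), the bridge between $\delta$ and $M$-purity is the observation that $\delta_i = g(a_1)$ if and only if $w_i \pre_M w'$ for some $w' \in \Ap(S)$ with $\ord(w') = g(a_1)$; the point is that an equality $\ord(w_i)+\ord(w) = g(a_1)$ with $w_i + w \in \Ap(S)$ forces $\ord(w_i + w) = g(a_1)$ and hence $w_i \pre_M (w_i+w)$, and conversely. I would first record two facts: (a) every maximal-order element of $\Ap(S)$ lies in $\maxap_M(S)$, since $w \pre_M u$ with $w \ne u$ forces $\ord(u) > \ord(w)$; and (b) in the finite poset $(\Ap(S)\setminus\{0\}, \pre_M)$ every element lies $\pre_M$-below some maximal element. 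For the forward implication, assume $S$ is $M$-pure: by (a) the set $\maxap_M(S)$ contains an element of order $g(a_1)$, so $M$-purity forces every element of $\maxap_M(S)$ to have order $g(a_1)$; then any $w_i$ lies $\pre_M$-below some $w' \in \maxap_M(S)$ by (b), and writing $w' = w_i + w$ with $\ord(w_i)+\ord(w) = \ord(w') = g(a_1)$ yields $\delta_i = g(a_1)$. For the converse, assume $\delta = g(a_1)$ and take $w_i \in \maxap_M(S)$: the equality $\delta_i = g(a_1)$ produces $w$ with $\ord(w_i)+\ord(w) = g(a_1) = \ord(w_i+w)$, so $w_i \pre_M (w_i+w)$, and maximality forces $w = 0$, whence $\ord(w_i) = g(a_1)$; thus all of $\maxap_M(S)$ shares the order $g(a_1)$, which is $M$-purity by Definition \ref{purity}.

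Parts (2) and (3) then follow quickly. For (2): if $S$ is $M$-pure and $M$-additive, then (1) gives $\delta = g(a_1)$ while $M$-additivity gives $g(a_1) = r$ by Theorem \ref{equivs}(2), so $\delta = r$; conversely $\delta = r$ together with the chain $\delta \le g(a_1) \le r$ from (\ref{ineq}) forces $\delta = g(a_1) = r$, so $S$ is $M$-pure by (1), and the $M$-pure case of Theorem \ref{equivs}(2) then upgrades $g(a_1) = r$ to Cohen-Macaulayness of $\gr_m(R)$, i.e. $M$-additivity. For (3): by Corollary \ref{msymiffspa}, $S$ is $M$-symmetric if and only if it is symmetric, $M$-pure, and $M$-additive, and replacing ``$M$-pure and $M$-additive'' by ``$\delta = r$'' via (2) gives exactly ``symmetric and $\delta = r$.''

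The main obstacle I anticipate is the bookkeeping in (1), namely pinning down the equivalence $\delta_i = g(a_1) \iff w_i \pre_M \text{ some order-}g(a_1)\text{ element}$: one must verify that maximal-order Ap\'ery elements are genuinely $\pre_M$-maximal and that the optimizing $w$ in the definition of $\delta_i$ witnesses a $\pre_M$-relation into such an element. Once that equivalence is secured, (2) and (3) are a matter of assembling the inequality chain (\ref{ineq}), Theorem \ref{equivs}(2), and Corollary \ref{msymiffspa}.
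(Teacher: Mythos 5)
Your proposal is correct and follows essentially the same route as the paper: part (1) via the bound $\delta_i \le g(a_1)$ together with the observation that every Ap\'ery element sits $\pre_M$-below a maximal one (you merely make explicit the poset bookkeeping the paper leaves implicit), part (2) via Theorem \ref{equivs}(2) and the chain $\delta \le g(a_1) \le r$, and part (3) via Corollary \ref{msymiffspa}. No gaps.
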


\begin{proof} (1) If $S$ is $M$-pure, then for any $w_i \in \Ap(S)$ there exists $w_j \in \Ap(S)$ such that $w_i+w_j \in \Ap(S)$ and $\ord(w_i)+\ord(w_j) = g(a_1)$. Thus $\delta_i = g(a_1)$ for all $0\le i\le e-1$. Conversely, let $\delta = g(a_1)$. Then for any $w_i\in \Ap(S)$ there exists $w_j \in \Ap(S)$ such that $w_i+w_j =w\in\Ap(S)$ and $\ord(w_i)+\ord(w_j)=\ord(w) = g(a_1)$. Thus $S$ is $M$-pure.

(2) If $S$ is $M$-pure and $S$ is $M$-additive, then by (1) $\delta = g(a_1)$ and by Theorem \ref{equivs} $g(a_1) = r$. Conversely if $\delta = r$, then in fact $\delta = g(a_1) = r$. By (1) $S$ is $M$-pure and hence by Theorem \ref{equivs} $\gr_m(R)$ is Cohen-Macaulay.

(3) This follows from Corollary \ref{msymiffspa} and (2).
\end{proof}

\begin{cor} Let $S$ be a semigroup with corresponding ring $R=k[[S]]$. If $\gr_{\bar m}(\bar R)$ is Gorenstein, then $\delta = \gamma = \ord(C) = \tau = g(a_1)$.
\end{cor}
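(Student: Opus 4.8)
The plan is to recognize that the hypothesis is precisely the $M$-pure symmetric condition and then to run a one-line squeeze argument using the inequality chain already established in (\ref{ineq}). The whole corollary should reduce to translating the ring-theoretic hypothesis into a property of the semigroup and invoking Theorem \ref{puredel}(1).

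First I would apply Theorem \ref{equivs}(1), which asserts that $\gr_{\bar m}(\bar R)$ is Gorenstein if and only if $S$ is $M$-pure symmetric. Thus the hypothesis delivers, in particular, that $S$ is $M$-pure. (The symmetry half of the conclusion of Theorem \ref{equivs}(1) is available too, but for this corollary only $M$-purity is needed.)

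Next I would invoke Theorem \ref{puredel}(1), which states that $S$ is $M$-pure if and only if $\delta = g(a_1)$. Applying this to the $M$-pure semigroup $S$ from the previous step gives the single equality $\delta = g(a_1)$, which pins the two endpoints of the relevant portion of the chain together.

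Finally I would close the argument with the chain of inequalities (\ref{ineq}), namely $\delta \le \gamma \le \ord(C) \le \tau \le g(a_1)$. Every intermediate quantity is trapped between $\delta$ and $g(a_1)$, and since these endpoints coincide by the previous step, all of the intermediate values are forced to be equal, yielding $\delta = \gamma = \ord(C) = \tau = g(a_1)$ as claimed. I do not anticipate any genuine obstacle here: once the hypothesis is converted into $M$-purity, the result is an immediate consequence of the monotone chain (\ref{ineq}) collapsing, and no new computation is required.
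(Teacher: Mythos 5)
Your proposal is correct and follows essentially the same route as the paper: the paper's proof is the one-line citation of Theorems \ref{equivs} and \ref{puredel}, which implicitly runs exactly your argument --- translate the Gorenstein hypothesis into $M$-pure symmetry via Theorem \ref{equivs}(1), obtain $\delta = g(a_1)$ from Theorem \ref{puredel}(1), and collapse the chain (\ref{ineq}). Your observation that only the $M$-purity half of the hypothesis is needed for the squeeze is a fair (and accurate) refinement of the same argument.
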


\begin{proof} Theorems \ref{equivs} and \ref{puredel}.
\end{proof}

\begin{cor}\label{cor49} For the following semigroups we have $\delta =\gamma =\ord(C)=\tau= g(a_1)=r$.
\begin{enumerate}
\item $S$ is of maximal embedding dimension.
\item $S$ is generated by an arithmetic sequence.
\item $S$ is $M$-symmetric ($\gr_m(R)$ is Gorenstein).
\end{enumerate}
\end{cor}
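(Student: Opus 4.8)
The plan is to exploit the chain of inequalities (\ref{ineq}), namely $\delta \le \gamma \le \ord(C) \le \tau \le g(a_1) \le r$, which already holds for an arbitrary semigroup. Since all five of the intermediate quantities are squeezed between the two extremes $\delta$ and $r$, it suffices to prove in each of the three cases that $\delta = r$; the equality of all the invariants in between then follows automatically from (\ref{ineq}). So the entire proof reduces to establishing the single equality $\delta = r$ in each case.

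To obtain $\delta = r$ I would invoke Theorem \ref{puredel}(2), which states that $\delta = r$ holds precisely when $S$ is both $M$-pure \emph{and} $M$-additive. Thus the task further reduces to verifying that each listed semigroup satisfies these two conditions simultaneously. For case (1), where $S$ is of maximal embedding dimension, and for case (2), where $S$ is generated by an arithmetic sequence, this is exactly the content of Proposition \ref{mpureadd}(2) and Proposition \ref{mpureadd}(4) respectively, so $\delta = r$ follows at once in both cases.

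For case (3), where $S$ is $M$-symmetric, I would argue even more directly: Theorem \ref{puredel}(3) asserts that $S$ is $M$-symmetric if and only if $S$ is symmetric and $\delta = r$, so $M$-symmetry immediately yields $\delta = r$ without any intermediate step. Alternatively, one could note that by Corollary \ref{msymiffspa} an $M$-symmetric semigroup is in particular $M$-pure and $M$-additive, and then apply Theorem \ref{puredel}(2) as in the first two cases.

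There is no genuine obstacle here; this is a bookkeeping corollary that simply assembles the preceding results via the squeeze furnished by (\ref{ineq}). The only point requiring a little care is to confirm that each cited hypothesis delivers \emph{both} $M$-purity and $M$-additivity rather than just one of them, since both are needed to reach $\delta = r$ through Theorem \ref{puredel}(2); Proposition \ref{mpureadd} is stated precisely so as to guarantee this conjunction for cases (1) and (2), and Corollary \ref{msymiffspa} supplies it for case (3).
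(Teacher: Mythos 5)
Your proof is correct and is essentially the paper's own argument: the paper's proof is just the citation ``Corollary \ref{msymiffspa}, Proposition \ref{mpureadd}, and Theorem \ref{puredel}'', and your write-up makes explicit exactly the intended squeeze through the chain $\delta \le \gamma \le \ord(C) \le \tau \le g(a_1) \le r$ after reducing each case to $\delta = r$ via Theorem \ref{puredel}. No gaps.
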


\begin{proof} Corollary \ref{msymiffspa}, Proposition \ref{mpureadd}, and Theorem \ref{puredel}.
\end{proof}

We have seen that if $\gr_m(R)$ is Gorenstein, then $\tau=g(a_1)$. The next proposition deals with the converse statement.

\begin{prop}\label{cor50} Let $R=k[[S]]$. Then $\gr_m(R)$ is Gorenstein if and only if the following conditions hold.
\begin{enumerate}
\item $R$ is Gorenstein
\item $\gr_m(R)$ is Cohen-Macaulay
\item $\tau=g(a_1)$
\item $\delta=\gamma$
\end{enumerate}
\end{prop}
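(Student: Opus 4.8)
The plan is to pass to semigroup language on both sides and then collapse the chain of inequalities (\ref{ineq}). Recall that by Definition \ref{m def} the Gorensteinness of $\gr_m(R)$ is exactly the statement that $S$ is $M$-symmetric, and that by Corollary \ref{msymiffspa} this is equivalent to $S$ being simultaneously symmetric, $M$-pure, and $M$-additive. Since $R$ is Gorenstein iff $S$ is symmetric (Kunz), and $\gr_m(R)$ is Cohen-Macaulay iff $S$ is $M$-additive, the content of the proposition is that, given (1)--(4), the missing ingredient $M$-purity can be recovered, and conversely that $M$-symmetry forces the equalities in (3) and (4).

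For the forward direction I would assume $\gr_m(R)$ is Gorenstein, so that $S$ is $M$-symmetric. By Corollary \ref{msymiffspa} this gives that $S$ is symmetric and $M$-additive, hence (1) $R$ is Gorenstein and (2) $\gr_m(R)$ is Cohen-Macaulay. For (3) and (4) I would simply invoke Corollary \ref{cor49}(3), which asserts that $M$-symmetry forces the entire chain $\delta = \gamma = \ord(C) = \tau = g(a_1) = r$ to consist of equalities; in particular $\tau = g(a_1)$ and $\delta = \gamma$.

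The substance is the reverse direction. Assume (1)--(4). Condition (1) gives that $S$ is symmetric, and condition (2) gives that $S$ is $M$-additive. The goal is to produce $M$-purity, which by Theorem \ref{puredel}(1) is equivalent to the single equality $\delta = g(a_1)$. To force this I would collapse the chain $\delta \le \gamma \le \ord(C) \le \tau \le g(a_1)$ as follows: since $S$ is symmetric and $M$-additive, Proposition \ref{someeq}(3) yields the middle equalities $\gamma = \ord(C) = \tau$. Combining these with $\delta = \gamma$ from (4) and $\tau = g(a_1)$ from (3) gives $\delta = \gamma = \ord(C) = \tau = g(a_1)$, and in particular $\delta = g(a_1)$. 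Theorem \ref{puredel}(1) then makes $S$ $M$-pure, and since $S$ is already symmetric and $M$-additive, Corollary \ref{msymiffspa} concludes that $S$ is $M$-symmetric, i.e.\ $\gr_m(R)$ is Gorenstein.

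The only delicate point is this reverse direction, and it lies entirely in extracting $M$-purity. Conditions (3) and (4) each pin down one end of the chain but are inert without a way to collapse its interior; the crucial observation is that symmetry together with $M$-additivity—precisely conditions (1) and (2)—is exactly the hypothesis of Proposition \ref{someeq}(3), which supplies $\gamma = \ord(C) = \tau$. Once this bridging equality is in hand the remaining argument is purely formal. I would take care to confirm that Proposition \ref{someeq}(3) is legitimately available here (it is stated in the excerpt, with its proof deferred to Corollary \ref{gam=tau}) and that no circularity arises, since both Corollary \ref{cor49} and Proposition \ref{someeq}(3) are logically prior to the present statement.
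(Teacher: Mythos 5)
Your proof is correct and follows essentially the same route as the paper, whose entire proof is the citation ``Proposition \ref{someeq} and Corollary \ref{cor49}'': the forward direction is Corollary \ref{cor49}(3), and the reverse direction collapses the chain via Proposition \ref{someeq}(3) together with conditions (3) and (4) to get $\delta=g(a_1)$, then recovers $M$-purity from Theorem \ref{puredel}. Your explicit invocation of Theorem \ref{puredel}(1) and Corollary \ref{msymiffspa} merely spells out what the paper's one-line citation leaves implicit, and your remark about the deferred proof of Proposition \ref{someeq}(3) (in Corollary \ref{gam=tau}) correctly notes that no circularity arises.
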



\begin{proof} Proposition \ref{someeq} and Corollary \ref{cor49}.
\end{proof}

\begin{ex} The semigroup $S=<5,8,12>$ satisfies the first three conditions of Proposition \ref{cor50}, but not the fourth. Thus $\gr_m(R)$ is not Gorenstein and $\delta < \gamma = \ord(C) = \tau= g(a_1) =r$.
\end{ex}

\begin{rmk} Working with the defining ideal of a numerical semigroup ring $R=k[[S]]$, i.e., the kernel of the surjection $S=k[[t_1,\dots,t_{\nu}]] \twoheadrightarrow \Rgen$ mapping $t_i$ to $x^{a_i}$, \citet[Corollary 2.6]{S} was able to show that $\ord(C) = \tau = g(a_1) = r$ when $\gr_m(R)$ is Gorenstein and $\nu \le 4$.
\end{rmk}

The hypothesis that $S$ is $M$-pure can be weakened and still have $\tau = g(a_1)$ if it is assumed that $a_2$ is sufficiently large.

\begin{prop}\label{puret=g} Let $S$ be pure and $\sfloor{a_2} \ge g(a_1)-1$, then $\gamma = \ord(C) =\tau = g(a_1)$.
\end{prop}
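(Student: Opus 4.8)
The plan is to show that the hypotheses force the single inequality $\gamma \ge g(a_1)$; since the chain~(\ref{ineq}) already supplies $\gamma \le \ord(C) \le \tau \le g(a_1)$, this collapses the four middle terms to one common value. Writing $g = g(a_1) = \max\{\ord(w) \st w \in \Ap(S)\}$ (Proposition~\ref{gAp}) and recalling $\gamma = \min_i \gamma_i$, it suffices to prove $\gamma_i \ge g$ for every $w_i \in \Ap(S)$, where $\gamma_i = \max\{\ord(w_i) + \sfloor{w} \st w\in S,\ w_i+w\in\Ap(S)\}$.

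First I would record the consequence of purity. An element of $\Ap(S)$ of maximal order $g$ is automatically maximal for $\pre$ (if $w \pre w'$ properly then $\ord(w') \ge \ord(w)+1 > g$), so it lies in $\maxap(S)$; as $S$ is pure, every element of $\maxap(S)$ then has order exactly $g$. Because $(\Ap(S),\pre)$ is finite, each $w_i$ satisfies $w_i \pre w_n$ for some $w_n \in \maxap(S)$, and such a $w_n$ has $\ord(w_n)=g$. Set $w = w_n - w_i \in S$; this $w$ is the candidate I will feed into the definition of $\gamma_i$.

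The key step, and the main obstacle, is to bound $\sfloor{w}$ from below. The naive route would mimic the $M$-pure case and try to realize $w_i$ inside a maximal representation of $w_n$, but only purity (not $M$-purity) is assumed, so this is unavailable and $w=w_n-w_i$ could a priori be small and order-deficient. The fix is the observation that $w$ is itself an Apéry element: if $w - a_1 \in S$ then $w_n - a_1 = w_i + (w-a_1) \in S$, contradicting $w_n \in \Ap(S)$, so $w \in \Ap(S)$. Consequently no representation of $w$ uses $a_1$, and a maximal representation gives $w \ge \ord(w)\,a_2$. The standing hypothesis $\sfloor{a_2} \ge g-1$, i.e.\ $a_2 \ge (g-1)a_1$, together with monotonicity of the floor then yields $\sfloor{w} \ge \ord(w)(g-1)$.

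Finally I would assemble the estimate and dispose of the degenerate cases. For $w_i \ne 0$ and $w \ne 0$ one has $\ord(w_i) \ge 1$ and $\ord(w) \ge 1$, whence $\gamma_i \ge \ord(w_i) + \sfloor{w} \ge 1 + (g-1) = g$. If $w = 0$ then $w_i = w_n$ has order $g$ and $\gamma_i \ge g$ trivially. If $w_i = 0$ then $\gamma_0 \ge \sfloor{w_n} \ge g$, since $w_n \ge g\,a_2 > g\,a_1$ forces $w_n/a_1 > g$. Thus $\gamma_i \ge g$ in every case, giving $\gamma \ge g(a_1)$ and hence $\gamma = \ord(C) = \tau = g(a_1)$. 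I expect the only real content to be the Apéry-stability of $w$ in the third paragraph; the surrounding inequalities are routine applications of Lemma~\ref{about ord} and the hypothesis.
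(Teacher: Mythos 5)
Your proof is correct and follows essentially the same route as the paper: both reduce the statement to showing $\gamma \ge g(a_1)$ via the chain $(\ast)$, split into the cases $w_i = 0$, $w_i \in \maxap(S)$, and the rest, use purity to pin the order of the relevant maximal Ap\'ery elements at $g(a_1)$, and invoke $\sfloor{a_2}\ge g(a_1)-1$ to bound $\sfloor{w}$ for a nonzero Ap\'ery element $w$. Your detour through ``$w=w_n-w_i$ lies in $\Ap(S)$, hence $w\ge \ord(w)a_2$'' is sound but slightly more elaborate than needed, since any nonzero element of $\Ap(S)$ is already $\ge a_2$, which is all the paper uses.
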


\begin{proof} It suffices to show that $\gamma = g(a_1)$. Let $w_i \in \Ap(S)$. If $w_i = 0$, choose $w\in \maxap(S)$. Then $\gamma_i = \sfloor{w_{e-1}} \ge \sfloor{w} \ge \ord(w) =g(a_1)$. If $0\ne w_i \not\in \maxap(S)$, then there exists some $w_j\in \Ap(S)$ such that $\gamma_i = \ord(w_i) + \sfloor{w_j} \ge \ord(w_i) +\sfloor{a_2} \ge 1 + (g(a_1) -1) = g(a_1)$. Lastly if $w_i \in \maxap(S)$, then $\gamma_i = \ord(w_i) = g(a_1)$. Therefore $\gamma = g(a_1)$.
\end{proof}

By Proposition \ref{mpureadd} and Theorem \ref{puredel}, $\tau = g(a_1)$ for every symmetric semigroup with $e\le 4$ (in fact in Theorem \ref{ele4} we show that $S=<4,5,7>$ is the only semigroup with $e\le 4$ such that $\tau < g(a_1)$). The next theorem concerns symmetric semigroups, in which case more can be proven using Proposition \ref{puret=g}.

\begin{thm}\label{except} The equality $\tau=g(a_1)$ holds for every symmetric semigroup with $e\le 6$ except the following semigroups.
\begin{enumerate}
\item $S=<5,6,9>$
\item $S=<6,7,10,11>$
\end{enumerate}
\end{thm}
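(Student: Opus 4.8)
The plan is to stratify the symmetric semigroups with $e\le 6$ by embedding dimension $\nu$ and to dispatch most strata with the $M$-purity machinery of Sections 3--4, leaving only two families in which the second generator $a_2$ is ``too small'' and a finite hand computation is needed. Since the case $e\le 4$ is already settled (by Proposition \ref{mpureadd} and Theorem \ref{puredel}, as noted just above the theorem), I focus on $e\in\{5,6\}$. Recall that a symmetric semigroup is pure, and that $\delta\le\tau\le g(a_1)$ by $(\ast)$; hence in each stratum it suffices to exhibit either $M$-purity (whence $\delta=g(a_1)$ by Theorem \ref{puredel}(1), forcing $\tau=g(a_1)$) or a direct verification that $\tau=g(a_1)$.

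First I would cut down the cases. For $\nu=2$ the semigroup is $M$-pure by Proposition \ref{mpureadd}(1); for $\nu=e-1$ (almost maximal embedding dimension), symmetry is equivalent to $M$-purity by Lemma \ref{almost max}; and $\nu=e$ cannot occur for a symmetric semigroup with $e>2$ by Lemma \ref{symax embed}. In each of these, $\tau=g(a_1)$. This leaves precisely three families: $(e,\nu)=(5,3)$, $(6,3)$, and $(6,4)$. The family $(6,3)$ is immediate, since Lemma \ref{3lem}(2) shows every symmetric semigroup with $e=6$ and $\nu=3$ is $M$-pure, giving $\tau=g(a_1)$ with no exceptions.

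The two remaining families $(5,3)$ and $(6,4)$ are where the exceptions live. In both, Lemma \ref{g and ev}(1) gives $g(a_1)\le e-\nu+1=3$. Consequently, whenever $\lfloor a_2/a_1\rfloor\ge 2$ (that is, $a_2\ge 2a_1$) the hypothesis $\lfloor a_2/a_1\rfloor\ge g(a_1)-1$ of Proposition \ref{puret=g} holds, and that proposition yields $\tau=g(a_1)$. Thus only semigroups with $a_1<a_2<2a_1$ can fail the equality, and these form a finite list. For $(5,3)$ I would first show, by analysing the orders in $\Ap(S)$ together with the symmetric pairing $w\mapsto w_{e-1}-w$, that every such semigroup satisfies $2a_3=3a_2$; the only members with $a_2<10$ are then $\langle 5,6,9\rangle$ and $\langle 5,8,12\rangle$. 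For $(6,4)$ a similar order/pairing analysis (here $g(a_1)=3$ with $\beta=(1,3,1,1)$, the unique order-$2$ Ap\'ery element pairing with a generator and the two remaining generators summing to $w_{e-1}$) shows that for each $a_2\in\{7,8,9,10,11\}$ only finitely many symmetric semigroups occur. For each surviving candidate I would compute the Goto vector via Proposition \ref{symgv}, $gv(S)=(\ord(f+1),\dots,\ord(f+a_1))$, and read off $\tau=\min_\alpha\ord(f+\alpha)$ and $g(a_1)=\max\{\ord(w)\st w\in\Ap(S)\}$. This isolates $\langle 5,6,9\rangle$ (where $\ord(f+1)=2<3=g(a_1)$) and $\langle 6,7,10,11\rangle$ (where $\ord(f+1)=\ord(f+2)=2<3$) as the only semigroups with $\tau<g(a_1)$, while e.g.\ $\langle 5,8,12\rangle$ has every $\ord(f+\alpha)\ge 3$, so $\tau=g(a_1)$.

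The main obstacle is this last step: one must verify that the enumeration of symmetric semigroups with $a_1<a_2<2a_1$ in the families $(5,3)$ and $(6,4)$ is both finite and complete. Finiteness is not automatic from ``$a_2$ bounded'' alone; it comes from combining $g(a_1)=3$ (so every Ap\'ery element is a sum of at most three generators) with the symmetric pairing, which forces rigid relations such as $w_{e-1}=a_p+a_q=a_i+a_j+a_s$ among the three generators and quickly rules out large generators (any decomposition making a purported generator a sum of two others contradicts minimality). Carrying out this bookkeeping for $(6,4)$ and confirming that $\langle 6,7,10,11\rangle$ is the unique failure there is the delicate part; the $(5,3)$ family becomes transparent once the relation $2a_3=3a_2$ is established.
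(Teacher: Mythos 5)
Your proposal follows essentially the same route as the paper: reduce to the families $\langle 5,a_2,a_3\rangle$ and $\langle 6,a_2,a_3,a_4\rangle$ via Proposition \ref{mpureadd}, Lemma \ref{3lem}, and Theorem \ref{puredel}, bound $a_2$ ($\le 9$ resp.\ $\le 11$) using Proposition \ref{puret=g}, pin down the forced Ap\'ery-set structure ($2a_3=3a_2$, resp.\ $3a_2=a_3+a_4$), enumerate the finitely many candidates, and check each directly. The only differences are cosmetic (you derive $g(a_1)\le 3$ from Lemma \ref{g and ev} and verify candidates via Proposition \ref{symgv}, where the paper reads the bound off the explicit Ap\'ery set and computes $\ord(C)=\tau$), so the argument is correct as proposed.
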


\begin{proof} We want to find all the symmetric semigroups with $e\le 6$ such that $\tau < g(a_1)$. By Proposition \ref{mpureadd}, Lemma \ref{3lem}, and Theorem \ref{puredel} we need only consider the cases $S = <5,a_2,a_3>$ and $S = <6,a_2,a_3,a_4>$.

If $S = <5,a_2,a_3>$, then $\Ap(S) = \{0,a_2,a_3,2a_2,3a_2 = 2a_3\}$. Also $a_2\le 9$ by Proposition \ref{puret=g}. The only semigroups that satisfy these requirements are $S = <5,6,9>$ and $S = <5,8,12>$. Now it is easy to check that $\ord(C) = \tau = 2 < 3 = g(a_1)$ for $S = <5,6,9>$, and that $\ord(C) = \tau = 3 = g(a_1)$ for $S = <5,8,12>$.

If $S = <6,a_2,a_3,a_4>$, than $\Ap(S) = \{0,a_2,a_3,a_4,2a_2,3a_2=a_3+a_4\}$. Also $a_2\le 11$ by Proposition \ref{puret=g}. The only semigroups that satisfy these requirements are $S = <6,7,10,11>$, $S = <6,11,13,20>$, and $S = <6,11,14,19>$. Again it is easy to check that $\ord(C) = \tau = 2 < 3 = g(a_1)$ for $S = <6,7,10,11>$, and that $\ord(C) = \tau = 3 = g(a_1)$ for the others.
\end{proof}

So far in this section, many of the results have been concerned with the equality $\tau=g(a_1)$. By Proposition \ref{mpureadd}, $S$ is $M$-pure and $\tau=g(a_1)$ if $\nu = 2$ or $\nu = e$. Also by this proposition $S$ is $M$-pure and $\tau=g(a_1)$ if it is symmetric and $e\le 4$. The next two examples show that this is the best we can do when we are considering all semigroups with a fixed multiplicity and embedding dimension, or a fixed multiplicity when $S$ is symmetric.

\begin{ex}\label{ex41} If $2 < \nu < e$, then the semigroup $S=<e,e+1,\dots,e+\nu-2,2e-1>$ has embedding dimension $\nu$, multiplicity $e$, and $\tau < g(a_1)$. The only claim that requires some justification is that $\tau < g(a_1)$.

There are two kinds of elements in $\Ap(S)$, so we write $\Ap(S) = A\cup B$ where $A = \{(e+h) + k(e+\nu-2) \st 1\le h\le \nu-2$, $0\le k\}$ and $B=\{l(2e-1)\st 1\le l\}$. Likewise there are two kinds of elements in $\Ap(S;1)$, so we write $\Ap(S;1) = A'\cup B'$ where $A' = \{l(2e-1)\st 1\le l\}$ and $B'=\{l(2e-1)+e\st 1\le l\}$. Now let $L\ge 1$ be the largest integer $m$ such that $\{me, me+1,\dots,me +e-1\} \not\subset S$. We consider two cases. In both cases we have $\tau = \min\{\sigma(1),\sigma(2),\dots,\sigma(e)\} \le \sigma(1) < \sigma(e) = g(a_1)$, where $(\sigma(1),\sigma(2),\dots,\sigma(e))$ is the Goto vector of $S$.

If $L = 2n-1$, then $\max\{\ord(w) \st w\in A\} = L+1 = 2n$ and $\max\{\ord(w) \st w\in B\} = n$. Thus $\sigma(e) = 2n$. On the other hand $\max\{\ord(w) \st w\in A'\} = n$ and $\max\{\ord(w) \st w\in B'\} = n$. Thus $\sigma(1) = n$.

If $L = 2n$, then $\max\{\ord(w) \st w\in A\} = L+1 = 2n+1$ and $\max\{\ord(w) \st w\in B\} = n+1$. Thus $\sigma(e) = 2n+1$. On the other hand $\max\{\ord(w) \st w\in A'\} = n$ and $\max\{\ord(w) \st w\in B'\} = n+1$. Thus $\sigma(1) = n+1$.
\end{ex}

\begin{ex}\label{ex42} If $e \ge 5$, then the semigroup $S=<e,e+1,e+4,e+5,\dots,2e-1>$ is symmetric, has multiplicity $e$, and $\tau < g(a_1)$. We have $$S = \{0,e,e+1,e+4,e+5,\dots,2e-1,2e,2e+1,2e+2,2e+4,\rightarrow\}$$ Clearly the multiplicity of $S$ is $e$. Moreover, $f=2e+3$ and in the set $\{0,1,\dots,2e+3\}$ exactly half are in $S$ and half are not in $S$. Thus $S$ is symmetric. Lastly $\tau = \min\{\ord(f+1),\ord(f+2), \dots,\ord(f+a_1)\} \le \ord(f+1) = \ord(2e+4) = 2 < 3 = \ord(3e+3) = \ord(f+a_1) = g(a_1)$.
\end{ex}

In the next proposition $\tau$ is related to the multiplicity and embedding dimension in special cases. 

\begin{prop}\label{tev} Let $S$ be a semigroup.
\begin{enumerate}
\item $\tau \le e-\nu+1$
\item $\tau \le e-2\nu+3$ if $S$ is $M$-pure symmetric and $2<\nu<e-1$.
\item $\tau = e-\nu+1 = \left\{ \begin{array} {c@{\quad}l} e-1 & \text{if } \nu = 2\\ 2 & \text{if } \nu = e-1 \text{ and } S \text{ is symmetric}\\ 1 & \text{if } \nu = e \end{array} \right.$
\item $\tau =\ceil[\nu-1]{e-1}$ if $S$ is generated by an arithmetic sequence.
\end{enumerate}
\end{prop}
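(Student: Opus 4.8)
The plan is to reduce every part to a statement about $g(a_1)$, exploiting two facts: that $\tau \le g(a_1)$ always holds (the fourth inequality in (\ref{ineq})), and that $\tau = g(a_1)$ for each of the structured classes appearing here. Once the equality $\tau = g(a_1)$ is in hand, the problem becomes the concrete computation of $g(a_1) = \max\{\ord(w) \st w\in\Ap(S)\}$ via Proposition \ref{gAp}.

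Parts (1) and (2) I expect to be immediate. Since $\tau \le g(a_1)$, combining this with Lemma \ref{g and ev}(1) gives $\tau \le g(a_1) \le e-\nu+1$, and combining it with Lemma \ref{g and ev}(2) gives $\tau \le g(a_1) \le e-2\nu+3$ under the hypothesis that $S$ is $M$-pure symmetric with $2<\nu<e-1$.

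For part (3) I would treat the three cases uniformly by first establishing $\tau = g(a_1)$ and then reading off $g(a_1)$ from the Ap\'ery set. In each case $S$ is $M$-pure and $M$-additive by Proposition \ref{mpureadd} (embedding dimension $2$; almost maximal embedding dimension together with symmetry; and maximal embedding dimension, respectively), so Theorem \ref{puredel}(2) gives $\delta = r$, which forces the entire chain (\ref{ineq}) to be equalities; in particular $\tau = g(a_1)$. It then remains to compute $g(a_1)$. Using the Ap\'ery set descriptions from the proof of Proposition \ref{mpureadd}: when $\nu=e$ we have $\Ap(S)=\{0,a_2,\dots,a_\nu\}$ with maximal order $1 = e-\nu+1$; when $\nu=e-1$ and $S$ is symmetric, $\Ap(S)$ contains a single element of order $2$ beyond the generators, so the maximal order is $2 = e-\nu+1$; and when $\nu=2$, $\Ap(S)=\{0,a_2,2a_2,\dots,(e-1)a_2\}$ has maximal order $e-1 = e-\nu+1$. (Alternatively, for $\nu=e$ one may simply combine part (1), which gives $\tau\le 1$, with Remark \ref{3.3}, which gives $\tau\ge 1$.)

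For part (4), Corollary \ref{cor49}(2) gives $\tau = g(a_1)$ for $S$ generated by an arithmetic sequence, so by Proposition \ref{gAp} it suffices to show that $\max\{\ord(w)\st w\in\Ap(S)\} = \ceil[\nu-1]{e-1}$. Writing $e-1 = q(\nu-1)+m$ with $0\le m<\nu-1$ and invoking the order formula $\ord(a_n+ka_\nu)=k+1$ for the admissible pairs $2\le n+k(\nu-1)\le e$ from the proof of Proposition \ref{mpureadd}(4), I would determine the largest attainable value of $k$: it is $q$ when $m=0$ (giving maximal order $q$) and is again $q$, realized at indices $2\le n\le m+1$, when $m>0$ (giving maximal order $q+1$). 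Matching these against $\ceil[\nu-1]{e-1}$, which equals $q$ when $m=0$ and $q+1$ when $0<m<\nu-1$, completes the argument. The main obstacle I anticipate is precisely this case analysis in part (4): one must carefully track which pairs $(n,k)$ actually occur in $\Ap(S)$, confirm that $k=q+1$ is never admissible, and verify that the resulting maximal order agrees with the ceiling in both the $m=0$ and $m>0$ regimes. The reductions in parts (1)–(2) and the order computations in part (3) are routine once the appropriate earlier results are cited.
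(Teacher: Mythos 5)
Your proposal is correct and follows essentially the same route as the paper: parts (1) and (2) come from $\tau\le g(a_1)$ together with Lemma \ref{g and ev}, and parts (3) and (4) come from first establishing $\tau=g(a_1)$ (the paper cites Corollary \ref{cor49}, whose proof is exactly your Proposition \ref{mpureadd} plus Theorem \ref{puredel} argument) and then reading off $g(a_1)=\max\{\ord(w)\st w\in\Ap(S)\}$ via Proposition \ref{gAp} from the Ap\'ery sets recorded in the proof of Proposition \ref{mpureadd}. One small slip in part (4): when $m=0$ the largest admissible $k$ is $q-1$, not $q$ --- since $\ord(a_n+ka_\nu)=k+1$, this is what produces your (correctly stated) maximal order $q$ --- so the conclusion is unaffected.
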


\begin{proof} (1) and (2) follow from Lemma \ref{g and ev}. For (3) and (4), we have $\tau = g(a_1)$ by Corollary \ref{cor49}. In the proof of Proposition \ref{mpureadd} we have the Ap\'ery sets for these semigroups, from which we can determine $g(a_1)$ by Proposition \ref{gAp}. The result follows.
\end{proof}

The next proposition gives a class of numerical semigroup rings $R$ for which every parameter ideal $Q$ of $R$ has the same Goto number. It is important to note that this result is not restricted to the monomial parameter ideals. This extends a result of \citet[Theorem 3.3]{GKM}.

\begin{prop} Let $S$ be a semigroup and $R=k[[S]]$ its corresponding ring. If $S$ is generated by consecutive integers, then $g(Q) =  \sceil[\nu-1]{e-1}$ for every parameter ideal $Q$ of $R$.
\end{prop}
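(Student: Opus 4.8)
The plan is to reduce the statement to a single computation of the Frobenius number, exploiting that Remark \ref{3.2} controls \emph{every} parameter ideal at once. That remark gives $\sceil[a_\nu]{f} \le g(Q) \le \sceil{f}$ for every parameter ideal $Q$ of $R$, monomial or not. Hence it suffices to prove that both of these outer bounds equal $\sceil[\nu-1]{e-1}$; once this is done, the sandwich forces $g(Q)$ to that common value, and no separate treatment of the monomial ideals or of the full Goto vector is required. Writing $a_1 = e$ and $a_\nu = e+\nu-1$, set $m_0 := \sceil[\nu-1]{e-1}$, so the target value is $m_0$.

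The first and main step is to pin down $f = f(S)$ exactly. I would observe that a sum of exactly $m$ of the generators $e, e+1, \dots, e+\nu-1$ realizes every integer in the interval $[\,me,\, me + m(\nu-1)\,]$, since such a sum can be increased by $1$ at a time; thus $S\setminus\{0\} = \bigcup_{m\ge 1}[\,me,\, me+m(\nu-1)\,]$. The block for $m$ and the block for $m+1$ meet with no gap precisely when $me + m(\nu-1) + 1 \ge (m+1)e$, i.e. when $m(\nu-1)\ge e-1$, i.e. $m \ge m_0$; and this inequality, being increasing in $m$, persists for all larger $m$. Consequently every integer $\ge m_0 e$ lies in $S$, while $m_0 e - 1$ does not (it exceeds the top $(m_0-1)e + (m_0-1)(\nu-1)$ of the previous block, because $(m_0-1)(\nu-1) < e-1$ by minimality of $m_0$). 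Therefore $f = m_0 e - 1$.

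The final step is the ceiling arithmetic, which I expect to be routine once $f$ is known. Since $m_0 e - 1 = (m_0-1)e + (e-1)$ with $0 < e-1 < e$, we get $\sceil{f} = m_0$. For the lower bound, the definition of $m_0$ gives $e-1 \le m_0(\nu-1) < (e-1)+(\nu-1) = e+\nu-2$. The estimate $\tfrac{f}{a_\nu}\le m_0$ holds trivially, being equivalent to $-1 \le m_0(\nu-1)$; and the strict upper inequality $m_0(\nu-1) < e+\nu-2$ rearranges to $m_0 e - 1 > (m_0-1)(e+\nu-1)$, i.e. $\tfrac{f}{a_\nu} > m_0 - 1$. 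Hence $\sceil[a_\nu]{f} = m_0$ as well. Both bounds coincide with $m_0 = \sceil[\nu-1]{e-1}$, and Remark \ref{3.2} forces $g(Q) = \sceil[\nu-1]{e-1}$ for every parameter ideal $Q$.

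The crux is the determination of $f$, together with the observation that the threshold for ``blocks meeting'' and the thresholds for the two ceiling bounds to agree are all governed by the same quantity $m_0(\nu-1)$ lying in $[\,e-1,\, e+\nu-2\,)$. This is exactly what makes the conclusion special to \emph{consecutive} integers: for an arithmetic sequence of common difference $d>1$ the bounds $\sceil[a_\nu]{f}$ and $\sceil{f}$ of Remark \ref{3.2} would in general separate, and the Goto number need not be constant across all parameter ideals. The one edge case to watch is $(\nu-1)\mid(e-1)$, where $m_0(\nu-1) = e-1$; there the strict inequality $e-1 < e+\nu-2$, which holds because $\nu\ge 2$, still secures $\tfrac{f}{a_\nu} > m_0 - 1$, so the argument goes through uniformly.
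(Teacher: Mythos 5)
Your proof is correct, and it differs from the paper's in a way worth noting. Both arguments reduce the statement about arbitrary parameter ideals to the sandwich of Remark \ref{3.2} and both ultimately rest on computing $f$ and checking that $\sceil{f}=\sceil[\nu-1]{e-1}$; your block decomposition $S\setminus\{0\}=\bigcup_{m\ge1}[me,\,me+m(\nu-1)]$ gives $f=m_0e-1$ directly, where the paper reads $f$ off the Ap\'ery set $\{0,a_n+ka_\nu\}$, splitting into the cases $(\nu-1)\mid(e-1)$ and not. The real divergence is in the lower bound: the paper invokes Proposition \ref{tev}(4), i.e.\ $\tau=\sceil[\nu-1]{e-1}$, which sits on top of Corollary \ref{cor49} and hence on the $M$-purity/$M$-symmetry machinery of Sections 3--4, whereas you show that the crude bound $\sceil[a_\nu]{f}$ of Remark \ref{3.2} already equals $m_0$ for consecutive generators, via the elementary estimate $e-1\le m_0(\nu-1)<e+\nu-2$. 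This makes your proof essentially self-contained modulo Remark \ref{3.2} (itself resting on Heinzer--Swanson for the passage to non-monomial ideals), and it has the added interest of exhibiting a class where the lower bound $\sceil[a_\nu]{f}$ --- which the paper notes is not sharp in general, e.g.\ for $S=\langle 5,8,12\rangle$ --- is attained; the trade-off is that the paper's route, having already built the $\tau=g(a_1)$ theory, gets its lower bound for free and only needs the upper-bound computation. Your closing observation about why the two outer bounds collapse only for common difference $d=1$ is accurate and is not made explicit in the paper.
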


\begin{proof} Let $e-1 = q(\nu-1) + m$ where $0\le m< \nu-1$. By Remark \ref{3.2} and Proposition \ref{tev}, $\sceil[\nu-1]{e-1} = \tau \le g(Q) \le \sceil{f}$ for {\em every} parameter ideal $Q$ of $R$. Thus it suffices to show that $\sceil{f} = \sceil[\nu-1]{e-1}$. Let $2\le n\le \nu$ and $0\le k\le q$. Then $\Ap(S) = \{0,a_n+ka_\nu\}$ where $2\le n+k(\nu-1) \le e$. If $m = 0$, then $f = qa_\nu -a_1 = qa_1-1$. So $\sceil{f} = \sceil{qa_1-1} = q - \sfloor{1} = q = \sfloor[\nu-1]{e-1} = \sceil[\nu-1]{e-1}$. If $m>0$, then $f = a_{m+1} + qa_\nu -a_1= (q+1)a_1 -1$. So $\sceil{f} = \sceil{(q+1)a_1-1} = q+1 - \sfloor{1} = q+1 = \sfloor[\nu-1]{e-1} +1 = \sceil[\nu-1]{e-1}$.
\end{proof}


\section{Computing the Goto Numbers of a Numerical Semigroup}\label{comp}

In this section we consider computing all the Goto numbers of a semigroup $S$ in terms of its minimal generators. As an intermediate step, we will compute them in terms of the elements of the Ap\'ery set of $S$. Recall that by Definition \ref{biject}, $\Ap(S) = \{w_0,w_1,\dots,w_{e-1}\} = \{v_0,v_1,\dots,v_{e-1}\}$ and $w_i = v_{\wh \imath} \equiv \wh \imath \mod e$ where $1\le \wh \imath\le e$. Moreover, we will allow the subscript of $v_n$ to be any integer by agreeing that $v_{n} = v_{m}$ if and only if $n \equiv m \mod e$. We begin by noting that $\wh \imath = w_i -(\sceil{w_i}-1)a_1$, and so $\A(S) = \{ \wh \imath \st 0\le i\le e-1\} = \{ w_i -(\sceil{w_i}-1)a_1 \st 0\le i\le e-1\}$. With this representation of $\wh \imath$, $\A(u)$ for $u\in S$ is determined in Lemma \ref{au}. We make use of Lemma \ref{apnot}, and properties of ceiling and floor functions throughout this section. For more about ceiling and floor functions see \citep[Chapter 3]{GKP}.

\begin{lem}\label{au} Let $S$ be a semigroup and $u\in S$. Choose $w_h$ to be the largest element of $\Ap(S)$ such that $w_h< u$, and write $u=w_p+ka_1$ where $w_p\in \Ap(S)$ and $k\ge0$. Then $$\A(u) =\left\{v_{\wh p-\wh \jmath} -\left(\ceil{v_{\wh p-\wh \jmath}}-1\right)a_1 \st 0\le j\le h\right\}$$
\end{lem}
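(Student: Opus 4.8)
The goal is to describe $\A(u) = \{\alpha \in \{1,\dots,e\} \st u - \alpha \in S\}$ in terms of Apéry-set data. The plan is to decode what the condition $u - \alpha \in S$ means once we know $\alpha = \wh\imath = w_i - (\sceil{w_i}-1)a_1$ ranges over $\A(S)$, using the decomposition $u = w_p + ka_1$. Recall from the setup that $\A(S) = \{\wh\imath \st 0 \le i \le e-1\}$, so every $\alpha \in \A(S)$ has this form; the task is to pin down exactly which $\alpha$ satisfy $u - \alpha \in S$. First I would observe that for $\alpha = \wh\jmath$, the difference $u - \alpha$ lies in $S$ precisely when, after reducing modulo $e = a_1$, the relevant Apéry representative is ``reachable,'' i.e.\ when the floor count of $a_1$'s subtracted does not exceed $k$ plus what is available. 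The natural quantity to track is $v_{\wh p - \wh\jmath}$, the Apéry element congruent to $u - \alpha \pmod{a_1}$.

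**The key reduction.**
The central step is to show that $u - \wh\jmath \in S$ if and only if $0 \le j \le h$, and that in this case $u - \wh\jmath = v_{\wh p - \wh\jmath} - (\sceil{v_{\wh p - \wh\jmath}} - 1)a_1 + (\text{nonnegative multiple of } a_1)$, so that $\wh\jmath$ equals the displayed expression $v_{\wh p - \wh\jmath} - (\sceil{v_{\wh p - \wh\jmath}}-1)a_1$. I would compute $u - \wh\jmath = w_p + ka_1 - w_j + (\sceil{w_j}-1)a_1$, then apply Lemma \ref{apnot}(2) to write $w_p - w_j = v_{\wh p} - v_{\wh\jmath} = v_{\wh p - \wh\jmath} - ta_1$ with $t \ge 0$. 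Substituting gives $u - \wh\jmath = v_{\wh p - \wh\jmath} + (k + \sceil{w_j} - 1 - t)a_1$, and membership in $S$ amounts to the coefficient of $a_1$ being nonnegative, i.e.\ $k + \sceil{w_j} - 1 \ge t$. The heart of the argument is to verify that this inequality holds exactly when $w_j \le w_h$ (equivalently $0 \le j \le h$), using the defining choice of $w_h$ as the largest Apéry element below $u$ together with $u = w_p + ka_1$.

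**Identifying the candidate representatives.**
Once membership is settled, I would show that the element $\alpha = \wh\jmath$ is recovered as $v_{\wh p - \wh\jmath} - (\sceil{v_{\wh p - \wh\jmath}}-1)a_1$, i.e.\ that the displayed formula indeed lists the Apéry-reduced residue of $u - \alpha$ back in the range $\{1,\dots,e\}$. This is because for any Apéry element $v_n$, the associated generator-index value $\wh{\ }$ is obtained by subtracting off all but one copy of $a_1$, and the map $\wh\jmath \mapsto v_{\wh p - \wh\jmath} - (\sceil{v_{\wh p - \wh\jmath}}-1)a_1$ is simply re-expressing $\wh\jmath$ through its congruence class $\wh p - \wh\jmath \bmod e$. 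I would check that as $j$ ranges over $0 \le j \le h$ these values are distinct (they are distinct modulo $a_1$ since the $\wh\jmath$ are), so no collapsing occurs and the set is exactly as stated.

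**Main obstacle.**
I expect the delicate point to be the equivalence between $u - \wh\jmath \in S$ and the range condition $0 \le j \le h$ — specifically, verifying that the inequality $k + \sceil{w_j} - 1 \ge t$ (with $t$ from Lemma \ref{apnot}(2)) is controlled precisely by whether $w_j \le w_h$. This requires care because $t$ depends on how $w_p - w_j$ reduces into its Apéry class, and one must relate the floor/ceiling bookkeeping of $w_j$, $w_p$, and $v_{\wh p - \wh\jmath}$ to the threshold $w_h < u \le$ (next Apéry element). The cleanest route is probably to argue directly: $u - \wh\jmath \ge u - w_j$ since $\wh\jmath \le w_j$, and to exploit that $w_h$ is the largest Apéry element strictly below $u$ so that subtracting any $\wh\jmath$ with $j > h$ drops below the reachable range modulo $a_1$. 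The floor-and-ceiling manipulations are routine once this threshold comparison is correctly framed.
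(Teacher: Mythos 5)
There is a genuine gap, and it sits exactly where you flagged the ``delicate point'': the asserted equivalence ``$u-\wh\jmath\in S$ if and only if $0\le j\le h$'' is false, and with it the identification of $\wh\jmath$ with $v_{\wh p-\wh \jmath}-(\sceil{v_{\wh p-\wh \jmath}}-1)a_1$. These two quantities lie in different congruence classes modulo $e$ in general: by construction $\wh\jmath\equiv w_j \pmod e$, whereas $v_{\wh p-\wh \jmath}-(\sceil{v_{\wh p-\wh \jmath}}-1)a_1\equiv \wh p-\wh\jmath\equiv u-w_j\pmod e$; they agree only when $2w_j\equiv u\pmod e$. Concretely, take $S=\langle 4,5,7\rangle$ and $u=7$, so that $w_h=w_1=5$, $h=1$, $p=2$, $k=0$, and $(\wh 0,\wh 1,\wh 2,\wh 3)=(4,1,3,2)$. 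Then $u-\wh 0=3\notin S$ and $u-\wh 1=6\notin S$ although $0,1\le h$, while $u-\wh 2=4\in S$ and $u-\wh 3=5\in S$ although $2,3>h$. So $\{j\st u-\wh\jmath\in S\}=\{2,3\}$, not $\{0,\dots,h\}$; the index $j$ in the lemma ranges over $\{0,\dots,h\}$ because it labels something else entirely.

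The source of the confusion is that the index $j$ in the statement labels the Ap\'ery class of $u-\alpha$, not of $\alpha$. The correct reduction is: for $\alpha\in\A(u)$ write $u-\alpha=w_q+\ell a_1$ with $w_q\in\Ap(S)$ and $\ell\ge 0$; since $1\le\alpha\le a_1$, the integer $\ell$ is forced to equal $\sceil{u-w_q}-1$, and such an $\ell\ge 0$ exists precisely when $w_q<u$, i.e.\ when $q\le h$. Hence $\A(u)=\{\,u-w_q-(\sceil{u-w_q}-1)a_1\st 0\le q\le h\,\}$, and the displayed formula follows by substituting $u=w_p+ka_1$ (the $k$ cancels against the ceiling) and applying Lemma~\ref{apnot}(2) to replace $w_p-w_q$ by $v_{\wh p-\wh q}-ta_1$ (the $t$ cancels likewise). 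Your computation $u-\wh\jmath=v_{\wh p-\wh \jmath}+(k+\sceil{w_j}-1-t)a_1$ is correct as far as it goes, but the membership condition $k+\sceil{w_j}-1\ge t$ it produces is not equivalent to $j\le h$, and no floor-and-ceiling bookkeeping will make it so; the threshold comparison you hoped would close the argument compares the wrong pair of objects.
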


\begin{proof} First we show that $\A(u) = \{u-w_q-(\sceil{u-w_q}-1)a_1\st w_q<u\}$. Indeed let $\alpha\in \A(u)$. Then $\alpha = u-u'$ for some $u'\in S$, and we can write $u'=w_q+(\sceil{u-w_q}-1)a_1$ where $w_q\in \Ap(S)$. Thus $\alpha = u-w_q-(\sceil{u-w_q}-1)a_1$ where $w_q<u$. Conversely, if $w_q<u$, then set $\alpha = u-w_q-(\sceil{u-w_q}-1)a_1$. We have $1\le \alpha\le a_1$ and $u-\alpha \in S$, thus $\alpha \in \A(u)$.

Now we have
\begin{eqnarray*}
u-w_q-\left(\ceil{u-w_q}-1\right)a_1 &=& w_p+ka_1-w_q-\left(\ceil{w_p+ka_1-w_q}-1\right)a_1\\
&=& w_p-w_q-\left(\ceil{w_p-w_q}-1\right)a_1\\
&=& v_{\wh p-\wh q} -ta_1-\left(\ceil{v_{\wh p-\wh q}-ta_1}-1\right)a_1 \text{ where } t\ge0\\
&=& v_{\wh p-\wh q}-\left(\ceil{v_{\wh p-\wh q}}-1\right)a_1\\
\end{eqnarray*}

\noindent and so $\A(u) =\{v_{\wh p-\wh \jmath} -(\sceil{v_{\wh p-\wh \jmath}}-1)a_1 \st 0\le j\le h\}$.
\end{proof}

\begin{cor}\label{Acor} Let $S$ be a semigroup and $u \in S$. Then
\begin{enumerate}
\item $\#\A(u) = i$ $\iff$ $w_{i-1}$ is the largest element of $\Ap(S)$ strictly smaller than $u$.
\item $w_i = \max\{u\in S\st \#\A(u) = i\}$ for $0\le i\le e-1$.
\end{enumerate}
\end{cor}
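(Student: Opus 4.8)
The plan is to read off both statements from the explicit description of $\A(u)$ provided by Lemma \ref{au}. That lemma states that, with $w_h$ the largest element of $\Ap(S)$ strictly smaller than $u$,
$$\A(u) = \left\{ v_{\wh p - \wh \jmath} - \left(\sceil{v_{\wh p - \wh \jmath}} - 1\right)a_1 \st 0 \le j \le h \right\}.$$
So the one quantitative fact I need is that this list of $h+1$ expressions consists of $h+1$ \emph{distinct} elements; granting this, $\#\A(u) = h+1$, and the rest is bookkeeping.

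First I would establish the distinctness. Each displayed element differs from $v_{\wh p - \wh \jmath}$ by an integer multiple of $a_1$, hence is congruent to $\wh p - \wh \jmath$ modulo $e = a_1$. Since $w_0,\dots,w_h$ are distinct elements of $\Ap(S)$ and no two elements of $\Ap(S)$ are congruent modulo $e$, the integers $\wh 0,\dots,\wh h$ are pairwise incongruent mod $a_1$; subtracting each from the fixed $\wh p$ preserves incongruence, so the $h+1$ listed elements are pairwise incongruent mod $a_1$ and in particular distinct. Hence $\#\A(u) = h+1$. This is the only step that requires care, and it is the (modest) main obstacle.

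With $\#\A(u) = h+1$ in hand, part (1) is immediate: $\#\A(u) = i$ precisely when $h = i-1$, that is, precisely when $w_{i-1}$ is the largest element of $\Ap(S)$ strictly smaller than $u$. For part (2), I would fix $0 \le i \le e-1$ and translate (1) into an interval condition. When $i = 0$ the condition $\#\A(u) = 0$ forces $u \le w_0 = 0$, so $u = 0 = w_0$. When $i \ge 1$, part (1) says $\#\A(u) = i$ if and only if $w_{i-1} < u \le w_i$; the set of such $u \in S$ contains $w_i$ and is bounded above by $w_i$, so its maximum is $w_i$, as claimed.

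As a sanity check, and an alternative route that bypasses Lemma \ref{au} altogether, one can observe that $\#\A(u)$ equals the number of $s \in S$ with $u - a_1 \le s \le u-1$, i.e. the number of semigroup elements in a window of $a_1$ consecutive integers just below $u$. Using that $v_n$ is the least element of $S$ in its residue class mod $a_1$, each residue class has exactly one representative in this window, and that representative lies in $S$ if and only if the corresponding Apéry element is $< u$; hence the window count equals $\#\{w \in \Ap(S) \st w < u\}$. This reproduces $\#\A(u) = h+1$ directly and leads to the same conclusions for (1) and (2).
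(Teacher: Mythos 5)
Your proposal is correct and follows the same route the paper intends: the corollary is stated as an immediate consequence of Lemma \ref{au}, and you read off $\#\A(u)=h+1$ from that lemma's explicit description of $\A(u)$ and then translate. Your only addition is to make explicit the distinctness of the $h+1$ listed elements (via incongruence mod $e$), a detail the paper leaves implicit; the residue-counting sanity check at the end is a nice independent confirmation but not a genuinely different argument.
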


Let $S=\{\lambda_0,\lambda_1,\lambda_2,\dots\}$ be an enumeration of $S$ and let $\pi_i = \#\A(\lambda_i)$. Corollary \ref{Acor} shows that the sequence $(\pi_i)$ has the structure $$0,1,\dots,1,2,\dots,2,\cdots,a_1-1,\dots,a_1-1,a_1,\dots$$ and the jumps occur at elements of $\Ap(S)$.

Lemma \ref{gs} gives a workable description of the elements of the Goto set of a semigroup. Recall that the Goto set of $S$ is the Goto vector taken as a set. The following proposition expresses the Goto number of a nonzero element in terms of the Ap\'ery set. If $S$ is symmetric this can be refined, and even more so if $S$ is $M$-additive symmetric. Proposition \ref{gu5} is the intermediate step referred to at the beginning of Section 5.

\begin{lem}\label{gs} Let $S$ be a semigroup and $gs(S)=\{\sigma(\,\wh \imath\,)\st 0\le i\le e-1\}$. Then
\begin{eqnarray*}\sigma(\,\wh \imath\,) &=& \max\left\{\ord\left(w+w_i -a_1\ceil{w_i}\right) \st w\in \maxap(S)\right\}\\
&=& \max\left\{\ord\left(w-v_{-\wh \imath} + a_1\floor{v_{-\wh \imath}}\right)\st w\in \maxap(S)\right\}
\end{eqnarray*}
\end{lem}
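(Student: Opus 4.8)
The plan is to start from the second description of $\sigma$ recorded in Definition \ref{gv gs}, namely $\sigma(\alpha) = \max\{\ord(p+\alpha) \st p \in T\}$, and to replace the index set $T$ by $\maxap(S)$. The bridge is Lemma \ref{maxminchar}(2), which states that $w \in \maxap(S)$ if and only if $w - a_1 \in T$. Read as a biconditional this shows that $w \mapsto w - a_1$ is a bijection from $\maxap(S)$ onto $T$: given $p \in T$ its preimage is $p + a_1$, which lies in $\maxap(S)$ by the same lemma applied to $p+a_1$. Hence $T = \{\,w - a_1 \st w \in \maxap(S)\,\}$ as sets, and substituting $p = w - a_1$ yields $\sigma(\alpha) = \max\{\ord(w - a_1 + \alpha) \st w \in \maxap(S)\}$ for every $\alpha \in \A(S)$.

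For the first displayed equality I would set $\alpha = \wh\imath$ and invoke the identity $\wh\imath = w_i - (\ceil{w_i} - 1)a_1$ recorded at the start of this section. Then $w - a_1 + \wh\imath = w + w_i - a_1\ceil{w_i}$, so applying $\ord$ and taking the maximum over $\maxap(S)$ gives the first formula directly.

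For the second equality it suffices to prove the arithmetic identity $w_i - a_1\ceil{w_i} = -v_{-\wh\imath} + a_1\floor{v_{-\wh\imath}}$, since adding $w$ inside $\ord$ then turns one display into the other. When $i = 0$ both sides vanish (here $\wh\imath = e = a_1$, so $w_0 = 0$ and $v_{-\wh\imath} = v_0 = 0$), and I would dispose of this case by hand. For $w_i \ne 0$ I would first note that a nonzero element of $\Ap(S)$ is never a multiple of $a_1$ (otherwise $w_i - a_1 \in S$, contradicting $w_i \in \Ap(S)$), so $\ceil{w_i} = \floor{w_i} + 1$ and thus $w_i - a_1\ceil{w_i} = w_i - a_1\floor{w_i} - a_1$. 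Then Lemma \ref{apnot}(3) applied to $n = \wh\imath$ gives $v_{\wh\imath} + v_{-\wh\imath} = (1 + \floor{v_{\wh\imath}} + \floor{v_{-\wh\imath}})a_1$, equivalently $v_{\wh\imath} - a_1\floor{v_{\wh\imath}} = a_1 + a_1\floor{v_{-\wh\imath}} - v_{-\wh\imath}$; substituting $w_i = v_{\wh\imath}$ and subtracting $a_1$ delivers exactly $w_i - a_1\ceil{w_i} = a_1\floor{v_{-\wh\imath}} - v_{-\wh\imath}$.

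The only real subtlety lies in this last arithmetic step: one must correctly combine the ceiling/floor conversion (valid precisely because nonzero Ap\'ery elements avoid multiples of $a_1$) with the identity of Lemma \ref{apnot}(3), while keeping track of the degenerate index $i = 0$ where $\wh\imath = e = a_1$. Everything else is a routine substitution into the definition of $\sigma$ together with the bijection $\maxap(S) \leftrightarrow T$.
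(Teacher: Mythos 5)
Your proposal is correct and follows essentially the same route as the paper's proof: both start from $\sigma(\,\wh\imath\,)=\max\{\ord(p+\wh\imath)\st p\in T\}$, use Lemma \ref{maxminchar}(2) to trade $T$ for $\maxap(S)$ via $p\mapsto p+a_1$, and then combine the identity $\wh\imath = w_i-(\ceil{w_i}-1)a_1$ with Lemma \ref{apnot}(3) and the observation that nonzero Ap\'ery elements are not multiples of $a_1$ to pass between the two displayed forms. Your treatment is, if anything, slightly more explicit than the paper's about the $i=0$ case and the ceiling-to-floor conversion.
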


\begin{proof} By definition $\sigma(\,\wh \imath\,) = \max\{\ord\big(p+\wh \imath)\st p\in T\}$. But $p+\wh \imath = p +w_i -(\sceil{w_i}-1)a_1 = (p+a_1) + w_i -a_1\sceil{w_i}$, and by Lemma \ref{maxminchar} $\{p+a_1\st p\in T\} = \maxap(S)$. This shows the first equality. For the second, notice that $w+w_0 -a_1\sceil{w_0} = w-v_{-\wh 0} + a_1\sfloor{v_{-\wh 0}} = w$. For $1\le i\le e-1$, $w+w_i -a_1\sceil{w_i}=w+w_i -a_1\sfloor{w_i} -a_1 = w-v_{-\wh \imath} + a_1\sfloor{v_{-\wh \imath}}$ by Lemma \ref{apnot}.
\end{proof}

\begin{prop}\label{gu5} Let $S$ be a semigroup and $0\ne u\in S$. Set $u = w_p +ka_1$ for some $w_p \in \Ap(S)$ and $k\ge0$, and let $w_h$ be the largest element of $\Ap(S)$ strictly smaller than $u$. Then
\begin{eqnarray*}
g(u) &=& \min_{0\le j\le h} \left\{\, \max\left\{\ord\left(w+v_{\wh p-\wh \jmath} - a_1\ceil{v_{\wh p-\wh \jmath}}\right) \st w\in \maxap(S)\right\}\, \right\}\\
&=& \min_{0\le j\le h} \left\{\, \max\left\{\ord\left(w-v_{\wh \jmath-\wh p} + a_1\floor{v_{\wh \jmath-\wh p}}\right) \st w\in \maxap(S)\right\}\, \right\}
\end{eqnarray*}
\noindent If $S$ is symmetric this reduces to
\begin{eqnarray*}
g(u) &=& \min_{0\le j\le h} \left\{\ord\left(w_{e-1}+v_{\wh p-\wh \jmath} - a_1\ceil{v_{\wh p-\wh \jmath}}\right) \right\}\\
&=& \min_{0\le j\le h} \left\{\ord\left(w_{e-1}-v_{\wh \jmath-\wh p} + a_1\floor{v_{\wh \jmath-\wh p}}\right) \right\}\\
&=& \min_{0\le j\le h} \left\{\ord\left(v_{\wh{e-1} + \wh p -\wh \jmath} + a_1\floor{v_{\wh \jmath-\wh p}}\right) \right\}
\end{eqnarray*}
\noindent and if $S$ is $M$-additive symmetric we further have
\begin{eqnarray*}
g(u) &=& \min_{0\le j\le h} \left\{\ord\left(v_{\wh{e-1} + \wh p -\wh \jmath}\right) + \floor{v_{\wh \jmath-\wh p}} \right\}
\end{eqnarray*}
\end{prop}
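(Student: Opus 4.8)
The plan is to read the final, $M$-additive symmetric formula off the third symmetric formula by invoking $M$-additivity. Concretely, the preceding display gives, for symmetric $S$,
\[
g(u) = \min_{0\le j\le h}\left\{\ord\!\left(v_{\wh{e-1}+\wh p-\wh\jmath} + a_1\floor{v_{\wh\jmath-\wh p}}\right)\right\},
\]
so the only remaining work is to split the order of each argument into the order of its Ap\'ery part plus the number of copies of $a_1$ that it carries. Since the symmetric reductions themselves (which come from Lemma \ref{gs}, the fact that symmetry forces $\maxap(S)=\{w_{e-1}\}$, and the subscript arithmetic of Lemma \ref{apnot}) are to be established earlier in the proof of this same proposition, I may take the third symmetric formula as given and pass directly to the $M$-additive case.

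For that passage, fix $j$ with $0\le j\le h$ and observe that the argument is already presented in the normalized shape $w+ka_1$, where $w=v_{\wh{e-1}+\wh p-\wh\jmath}$ and $k=\floor{v_{\wh\jmath-\wh p}}$. By the indexing convention $v_n=v_m\iff n\equiv m\bmod e$, the element $w=v_{\wh{e-1}+\wh p-\wh\jmath}$ lies in $\Ap(S)$; and because $v_{\wh\jmath-\wh p}\in\Ap(S)\subset S$ is nonnegative, the integer $k=\big\lfloor v_{\wh\jmath-\wh p}/a_1\big\rfloor$ is $\ge 0$. Since $S$ is $M$-additive, $\gr_m(R)$ is Cohen-Macaulay, so Lemma \ref{garcia}(4) applies and yields $\ord(w+ka_1)=\ord(w)+k$. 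Applied termwise this reads
\[
\ord\!\left(v_{\wh{e-1}+\wh p-\wh\jmath} + a_1\floor{v_{\wh\jmath-\wh p}}\right) = \ord\!\left(v_{\wh{e-1}+\wh p-\wh\jmath}\right) + \floor{v_{\wh\jmath-\wh p}},
\]
and taking the minimum over $0\le j\le h$ produces exactly the claimed $M$-additive symmetric formula.

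For the final statement in isolation there is no real obstacle: it is a termwise application of the Cohen-Macaulay criterion of Lemma \ref{garcia}(4), and the substantive work lives upstream in the general and symmetric formulas. The one point requiring care is verifying that each argument is genuinely of the form $w+ka_1$ with $w\in\Ap(S)$ and $k\ge 0$ before invoking Lemma \ref{garcia}(4); I would therefore record explicitly that $v_{\wh{e-1}+\wh p-\wh\jmath}\in\Ap(S)$ and that $\floor{v_{\wh\jmath-\wh p}}\ge 0$, since otherwise the additivity of $\ord$ supplied by $M$-additivity would not be available to separate the two summands.
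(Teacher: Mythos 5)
Your treatment of the final equality is correct and is exactly the paper's route for that step: each argument in the third symmetric formula has the shape $w+ka_1$ with $w=v_{\wh{e-1}+\wh p-\wh \jmath}\in\Ap(S)$ and $k=\sfloor{v_{\wh \jmath-\wh p}}\ge 0$, so Lemma \ref{garcia}(4) gives $\ord(w+ka_1)=\ord(w)+k$ termwise; your explicit verification of those two hypotheses is a worthwhile elaboration of the paper's one-line citation of Lemma \ref{garcia}.

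The problem is that this is only the last of the six asserted equalities, and you explicitly decline to prove the other five: ``taking the third symmetric formula as given'' is not available in a proof of the proposition itself. Concretely, what is missing is the following. (i) The two general formulas are obtained by combining Corollary \ref{cmain}, which says $g(u)=\min\{\sigma(\alpha)\st\alpha\in\A(u)\}$, with Lemma \ref{au}, which identifies $\A(u)$ with $\{v_{\wh p-\wh \jmath}-(\lceil v_{\wh p-\wh \jmath}/a_1\rceil-1)a_1\st 0\le j\le h\}$ --- this is where the index range $0\le j\le h$ and the element $w_h$ enter --- and with Lemma \ref{gs}, which rewrites each $\sigma(\,\wh\imath\,)$ as a maximum over $\maxap(S)$ in the two displayed forms. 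You never invoke Lemma \ref{au} or explain how $\A(u)$ is converted into the stated minimum over $j$, so the general formulas are unsupported. (ii) The symmetric reductions require $\maxap(S)=\{w_{e-1}\}$ (Proposition \ref{psym}) to collapse the inner maximum to a single term, together with the identity $w_{e-1}-v_n=v_{\wh{e-1}-n}$ for $v_n\in\Ap(S)$ to pass from the second symmetric display to the third; you name roughly the right ingredients in passing but do not carry out either step. As written, the proposal establishes one equality out of six and defers the substantive content of the proposition.
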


\begin{proof} The first two equalities follow from Lemmas \ref{au} and \ref{gs}. When $S$ is symmetric, $\maxap(S) = \{w_{e-1}\}$. Also $w_{e-1} - v_n = v_{\wh{e-1}-n}$ for all $v_n \in \Ap(S)$. Thus we have the next three equalities. The last equality follows from Lemma \ref{garcia}.
\end{proof}

Recall from Section 4 that for a semigroup $S$, we always have $\gamma \le \ord(C) \le \tau\le g(a_1) \le r$. By Proposition \ref{someeq}, $\ord(C) = \tau$ if $S$ is symmetric and $g(a_1) = r$ if $S$ is $M$-additive. We can say more if $S$ is $M$-additive symmetric.

\begin{cor}\label{gam=tau} If $S$ is an $M$-additive symmetric semigroup, then $\gamma = \ord(C) = \tau\le g(a_1)=r$.
\end{cor}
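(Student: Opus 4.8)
The plan is to reduce the corollary to the single new equality $\gamma=\tau$. The chain $\delta\le\gamma\le\ord(C)\le\tau\le g(a_1)\le r$ proved in Section~4 is available unconditionally; Proposition~\ref{someeq}(1) upgrades $\ord(C)\le\tau$ to $\ord(C)=\tau$ using symmetry, and Proposition~\ref{someeq}(2) gives $g(a_1)=r$ from $M$-additivity. Thus only $\gamma=\tau$ needs proof, and since $\gamma\le\ord(C)=\tau$ is already in hand, I would establish the reverse inequality $\tau\le\gamma=\min_i\gamma_i$ by showing $\tau\le\gamma_i$ for each $i$ with $0\le i\le e-1$.

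First I would compute $\gamma_i$ explicitly for a symmetric semigroup. In the defining maximum $\gamma_i=\max\{\ord(w_i)+\floor{w}\st w\in S,\ w_i+w\in\Ap(S)\}$, the term $\floor{w}$ is nondecreasing in $w$, so the maximum is attained at the largest admissible $w$, that is, at the largest element of $\Ap(S)$ dominating $w_i$. Since $w_{e-1}$ is the largest element of $\Ap(S)$ and $w_i\pre w_{e-1}$ for all $i$, with $w_i+w_{e-1-i}=w_{e-1}$ (all from Proposition~\ref{psym}), the optimal choice is $w=w_{e-1-i}$. Hence $\gamma_i=\ord(w_i)+\floor{w_{e-1-i}}$.

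Next I would realize this value as the order of an element of the conductor. Put $s_i=w_i+\floor{w_{e-1-i}}a_1$ and let $\rho_i=w_{e-1-i}-\floor{w_{e-1-i}}a_1\in\{0,1,\dots,a_1-1\}$ be the residue of $w_{e-1-i}$ modulo $a_1$. Using $w_i+w_{e-1-i}=w_{e-1}=f+a_1$ (Proposition~\ref{psym}), one finds $s_i=w_{e-1}-\rho_i=f+(a_1-\rho_i)$, so $s_i=f+\alpha_i$ with $\alpha_i=a_1-\rho_i\in\{1,\dots,a_1\}$; in particular $s_i>f$. Because $w_i\in\Ap(S)$ and $S$ is $M$-additive, Lemma~\ref{garcia}(4) computes the order on the nose: $\ord(s_i)=\ord(w_i)+\floor{w_{e-1-i}}=\gamma_i$. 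Therefore
$$\tau=\ord(C)=\min\{\ord(f+1),\dots,\ord(f+a_1)\}\le\ord(f+\alpha_i)=\gamma_i .$$
As this holds for every $i$, we get $\tau\le\min_i\gamma_i=\gamma$, and combined with $\gamma\le\ord(C)=\tau$ and $g(a_1)=r$ this yields $\gamma=\ord(C)=\tau\le g(a_1)=r$.

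The step I expect to be the crux is the middle one together with the exact order computation: symmetry is precisely what forces the defining maximum of $\gamma_i$ to be achieved at the complementary element $w_{e-1-i}$, and it is $M$-additivity (via Lemma~\ref{garcia}(4)) that pins $\ord(s_i)$ to equal $\gamma_i$ rather than merely bounding it below. The only genuine bookkeeping is checking that the witness $s_i=f+\alpha_i$ satisfies $1\le\alpha_i\le a_1$, so that it is counted by $\ord(C)$; the residue computation handles this uniformly, with no separate boundary case needed, since $\alpha_i=a_1$ occurs exactly when $i=e-1$.
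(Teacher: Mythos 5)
Your proof is correct, but it takes a different route from the paper's. The paper deduces the corollary from the general formula of Proposition \ref{gu5}: it writes $\tau = g(f+a_1+1)$ (Proposition \ref{cons}), specializes the $M$-additive symmetric case of that formula, and identifies the resulting minimum term-by-term with $\min_j \gamma_j = \gamma$, so the whole argument is a one-line consequence of the Section 5 machinery (Lemmas \ref{au} and \ref{gs}). You instead bypass Proposition \ref{gu5} entirely: you observe that $\gamma \le \ord(C) = \tau$ is already free from the unconditional chain plus Proposition \ref{someeq}(1), and you prove only the reverse inequality $\tau \le \gamma_i$ by exhibiting, for each $i$, the explicit conductor element $s_i = w_i + \sfloor{w_{e-1-i}}a_1 = f + \alpha_i$ with $1\le \alpha_i\le a_1$, whose order Lemma \ref{garcia}(4) pins to $\ord(w_i)+\sfloor{w_{e-1-i}} = \gamma_i$. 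Your computation $\gamma_i = \ord(w_i)+\sfloor{w_{e-1-i}}$ (via Proposition \ref{psym}, since symmetry forces the defining maximum to be attained at the complementary element $w_{e-1-i}$) is exactly the content of the step the paper's displayed computation asserts without detail, namely the identification of $\ord\bigl(v_{\wh{e-1}+\wh{p}-\wh{\jmath}}\bigr)+\sfloor{v_{\wh{\jmath}-\wh{p}}}$ with $\gamma_j$; so your argument both avoids the heavier notation and makes explicit where symmetry and $M$-additivity each enter. What the paper's route buys is economy, since Proposition \ref{gu5} is needed anyway for the computations later in Section 5; what yours buys is a self-contained proof readable immediately after Section 4.
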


\begin{proof} $g(a_1) = r$ by Proposition \ref{someeq}. Thus it suffices to show that $\gamma = \tau$. Indeed
\begin{eqnarray*}
\tau &=& g(f+a_1+1) \text{ by Proposition \ref{cons}}\\
&=& \min_{0\le j\le e-1} \left\{\ord\left(v_{\wh{e-1} + \wh p -\wh \jmath}\right) + \floor{v_{\wh \jmath-\wh p}} \right\} \text{ by Proposition \ref{gu5}}\\
&=& \min_{0\le j\le e-1} \{\gamma_j\} \\
&=& \gamma
\end{eqnarray*}
\end{proof}

The next proposition and its corollary also give the Goto numbers of $S$ in terms of the Ap\'ery set of $S$. This time we consider an $M$-additive symmetric semigroup with the added condition that $\tau=g(a_1)$. This includes the case when $S$ is $M$-symmetric, which is the subject of its corollary.

\begin{prop}\label{maddsym5} Let $S$ be $M$-additive symmetric and $\tau=g(a_1)$. Then
\begin{enumerate}
\item $g(w_i) = \min_{0\le j<i}\, \{\ord(v_{\wh{e-1}+\wh{\imath}-\wh{\jmath}}) + \sfloor{v_{\wh{\jmath}-\wh{\imath}}}\}$ for $1\le i\le e-1$.
\item $g(u) = \ord(w_{e-1}) = \tau$ for all other $u\in S$.
\end{enumerate}
\end{prop}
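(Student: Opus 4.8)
The plan is to obtain both formulas as quick consequences of the machinery already in place, so that no genuinely new estimate is required. Part (1) will be a direct specialization of Proposition \ref{gu5}, and part (2) a short squeezing argument powered by the hypothesis $\tau = g(a_1)$.

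For part (1), I would apply the $M$-additive symmetric formula of Proposition \ref{gu5} to $u = w_i$ with $1\le i\le e-1$. In the canonical representation $u = w_p + ka_1$ the element $w_i$ already lies in $\Ap(S)$, so $p = i$ and $k = 0$, giving $\wh p = \wh{\imath}$. Since $\Ap(S) = \{w_0,\dots,w_{e-1}\}$ is listed so that $w_0 < \cdots < w_{e-1}$, the largest Apéry element strictly below $w_i$ is $w_{i-1}$, so the integer $h$ appearing in Proposition \ref{gu5} equals $i-1$. Substituting $h = i-1$ and $\wh p = \wh{\imath}$ into that formula produces exactly $g(w_i) = \min_{0\le j<i}\{\ord(v_{\wh{e-1}+\wh{\imath}-\wh{\jmath}}) + \sfloor{v_{\wh{\jmath}-\wh{\imath}}}\}$; this step is essentially bookkeeping once Proposition \ref{gu5} is available.

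For part (2), I would first record two facts. Because $S$ is symmetric, $w_{e-1} = f+a_1$ and every $w\in\Ap(S)$ satisfies $w \pre w_{e-1}$ (Proposition \ref{psym}), so $\ord(w_{e-1}) = \max\{\ord(w)\st w\in\Ap(S)\} = g(a_1)$ by Proposition \ref{gAp}; equivalently $\sigma(e) = \ord(w_{e-1}) = g(a_1)$, since $\A(a_1) = \{e\}$. Combining this with the hypothesis $\tau = g(a_1)$ and the identity $\tau = \min\{\sigma(1),\dots,\sigma(e)\}$ from Proposition \ref{cons}, I get $\sigma(e) = \tau \le \sigma(\alpha)$ for every $\alpha$; that is, the hypothesis $\tau = g(a_1)$ is precisely the statement that the last coordinate of the Goto vector is minimal. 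Next, the elements covered by part (2) are exactly the nonzero $u\in S$ with $u\notin\Ap(S)$, and for these $u - a_1 \in S$, i.e. $e\in\A(u)$. Then Corollary \ref{cmain} gives $g(u) = \min\{\sigma(\alpha)\st\alpha\in\A(u)\} \le \sigma(e) = \tau$, while $g(u)\ge\tau$ holds by the definition of $\tau$; hence $g(u) = \tau = \ord(w_{e-1})$.

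The proof carries no single heavy computation; the point needing the most care is the logic of part (2): that for $u\in S$, failing to lie in $\Ap(S)$ is equivalent to $e = a_1\in\A(u)$, together with the observation that $\tau = g(a_1)$ is exactly what makes $\sigma(e)$ the least entry of the Goto vector, so that membership of $e$ in $\A(u)$ immediately pins $g(u)$ down to $\tau$. Everything else is a direct appeal to Proposition \ref{gu5}, Corollary \ref{cmain}, and the characterizations of $\tau$ and $g(a_1)$; I would also note that $M$-additivity enters only through the $M$-additive symmetric form of Proposition \ref{gu5} used in part (1).
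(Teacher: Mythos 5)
Your proposal is correct and follows essentially the same route as the paper, which simply cites Proposition \ref{gu5} for part (1) and Propositions \ref{gAp} and \ref{cons} for part (2); your identification of $h=i-1$ and $\wh p=\wh\imath$ in Proposition \ref{gu5}, and your observation that $u\notin\Ap(S)$ is equivalent to $e\in\A(u)$ so that $g(u)\le\sigma(e)=g(a_1)=\tau\le g(u)$, is exactly the intended (but unwritten) content of that one-line proof.
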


\begin{proof} (1) follows from Proposition \ref{gu5}, and (2) follows from Propositions \ref{gAp} and \ref{cons}.
\end{proof}

\begin{cor}\label{msym5} Let $S$ be $M$-symmetric. Then
\begin{enumerate}
\item $g(w_i) = \min_{0\le j<i}\, \{\tau + \sfloor{v_{\wh{\jmath}-\wh{\imath}}} - \ord(v_{\wh{\jmath}-\wh{\imath}}) \}$ for $1\le i\le e-1$.
\item $g(u) = \ord(w_{e-1}) = \tau$ for all other $u\in S$.
\end{enumerate}
\end{cor}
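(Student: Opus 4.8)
The plan is to obtain Corollary \ref{msym5} as a direct consequence of Proposition \ref{maddsym5}, the only genuinely new ingredient being a rewriting of the order term inside the minimum that is made available by $M$-purity. First I would check that the hypotheses of Proposition \ref{maddsym5} hold. Since $S$ is $M$-symmetric, Corollary \ref{msymiffspa} shows that $S$ is symmetric, $M$-pure, and $M$-additive, and Corollary \ref{cor49}(3) gives $\tau = g(a_1)$. In particular $S$ is $M$-additive symmetric with $\tau = g(a_1)$, so Proposition \ref{maddsym5} applies. This already yields statement (2) verbatim, together with
$$g(w_i) = \min_{0\le j<i}\bigl\{\ord(v_{\wh{e-1}+\wh\imath-\wh\jmath}) + \sfloor{v_{\wh\jmath-\wh\imath}}\bigr\}, \qquad 1\le i\le e-1.$$
For the identification in (2), symmetry gives $\maxap(S)=\{w_{e-1}\}$, so $\ord(w_{e-1}) = g(a_1) = \tau$ by Proposition \ref{gAp} together with $\tau = g(a_1)$.

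The heart of the argument is the single identity
$$\ord(v_{\wh{e-1}+\wh\imath-\wh\jmath}) = \tau - \ord(v_{\wh\jmath-\wh\imath}),$$
since substituting it into the displayed formula turns the bracket into $\tau + \sfloor{v_{\wh\jmath-\wh\imath}} - \ord(v_{\wh\jmath-\wh\imath})$, which is exactly statement (1). Writing $n = \wh\jmath-\wh\imath$, so that $\wh{e-1}+\wh\imath-\wh\jmath \equiv \wh{e-1}-n \bmod e$, the claim is equivalent to $\ord(v_{\wh{e-1}-n}) + \ord(v_n) = \ord(w_{e-1})$. To prove it I would first locate $w_{e-1}-v_n$: since $v_n\in\Ap(S)$ and $S$ is symmetric, Proposition \ref{psym} gives $w_{e-1}-v_n\in\Ap(S)$, and because this element is congruent to $\wh{e-1}-n$ modulo $e$ it must be $v_{\wh{e-1}-n}$. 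Thus, writing $v_n = w_a$, Proposition \ref{psym}(4) gives $v_{\wh{e-1}-n} = w_{e-1}-w_a = w_{e-1-a}$. Now $M$-purity enters through Proposition \ref{mpsym}(4): as $a + (e-1-a) = e-1$ we obtain $\ord(w_a) + \ord(w_{e-1-a}) = \ord(w_{e-1})$, that is, $\ord(v_n) + \ord(v_{\wh{e-1}-n}) = \ord(w_{e-1}) = \tau$. This holds for every residue $n$, in particular for $n=\wh\jmath-\wh\imath$, giving the identity and hence (1).

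The main obstacle is purely the bookkeeping that switches between the two indexings of $\Ap(S)$: the residue indexing $v_n$ used in the formula of Proposition \ref{maddsym5} and the size indexing $w_a$ in which the symmetry relation is phrased. One must verify both that $w_{e-1}-v_n$ genuinely lands in $\Ap(S)$ (by symmetry, Proposition \ref{psym}) and that it carries the residue $\wh{e-1}-n$, so that the symmetric complement $w_{e-1-a}$ of $w_a$ is correctly identified with $v_{\wh{e-1}-n}$; only then does $M$-purity force the additivity $\ord(v_n) + \ord(v_{\wh{e-1}-n}) = \tau$. Once this identity is in hand the substitution is routine, and both parts of the corollary follow.
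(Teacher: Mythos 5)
Your proposal is correct and follows essentially the same route as the paper: the paper's proof is precisely the one-line chain $\ord(v_{\wh{e-1}+\wh{\imath}-\wh{\jmath}}) = \ord(w_{e-1} - v_{\wh{\jmath}-\wh{\imath}}) = \ord(w_{e-1}) - \ord(v_{\wh{\jmath}-\wh{\imath}}) = \tau - \ord(v_{\wh{\jmath}-\wh{\imath}})$ substituted into Proposition \ref{maddsym5}, and you have simply supplied the justifications (symmetry to identify $w_{e-1}-v_n$ with $v_{\wh{e-1}-n}$, $M$-purity for the order additivity) that the paper leaves implicit.
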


\begin{proof} Using Proposition \ref{maddsym5}, we only need to notice that $\ord(v_{\wh{e-1}+\wh{\imath}-\wh{\jmath}}) = \ord(w_{e-1} - v_{\wh{\jmath}-\wh{\imath}}) = \ord(w_{e-1}) - \ord(v_{\wh{\jmath}-\wh{\imath}}) = \tau - \ord(v_{\wh{\jmath}-\wh{\imath}})$.
\end{proof}

Finally we come to the the main results of this section. Theorems \ref{arith} and \ref{sym almost}, regarding symmetric semigroups generated by an arithmetic sequence and symmetric semigroups of almost maximal embedding dimension, make use of Corollary \ref{msym5}; whereas Theorem \ref{maxbed} considers semigroups of maximal embedding dimension and makes use of Lemma \ref{gs}. The last two results, Theorems \ref{ele4} and \ref{ee5}, provide a complete computation of the Goto numbers of the semigroups with multiplicity less than 5, and with multiplicity equal to 5 if the semigroup is symmetric.

\renewcommand{\arraystretch}{1.2}

\begin{thm}\label{arith} Let $S$ be a semigroup generated by an arithmetic sequence such that $q =(e-2)/(\nu-1)$ is an integer. Then
\begin{enumerate}
\item $g(a_2+ka_\nu) = \sfloor{a_2 + (q-k)a_\nu} + k$ where $0\le k\le q$.
\item $g(a_i+ka_\nu) = \sfloor{a_{\nu-i+3} + (q-k+1)a_\nu}+k+1$ where $3\le i\le \nu$ and $0\le k\le q-1$.
\item $g(u) = q+1 = \tau$ for all other $0\ne u\in S$.
\end{enumerate}
\end{thm}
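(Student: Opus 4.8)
The plan is to specialize the general formula for Goto numbers of an $M$-additive symmetric semigroup, namely Corollary \ref{msym5}, to the arithmetic-sequence case. By Corollary \ref{grg}(3), the hypothesis that $q=(e-2)/(\nu-1)$ is an integer guarantees that $S$ is $M$-symmetric, so Corollary \ref{msym5} applies. The first task is to recall the explicit description of $\Ap(S)$ from the proof of Proposition \ref{mpureadd}(4): writing $e-1=q(\nu-1)+m$, the hypothesis forces $m$ to equal $1$ (since $q(\nu-1)=e-2$ means $e-1=q(\nu-1)+1$), so every nonzero element of $\Ap(S)$ has the unique form $a_n+ka_\nu$ with $2\le n\le \nu$ and $0\le k\le q$, subject to $2\le n+k(\nu-1)\le e$, and $\ord(a_n+ka_\nu)=k+1$. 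In particular $w_{e-1}=a_2+qa_\nu$ is the top element, with $\ord(w_{e-1})=q+1=\tau$ by Proposition \ref{tev}(4) (or by Corollary \ref{cor49}(2), which gives $\tau=g(a_1)=q+1$).

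Next I would set up the index bookkeeping. Since $a_2=a_1+d$ for the common difference $d$ and the generators are $a_1, a_1+d, \dots, a_1+(\nu-1)d$ with $a_\nu=a_1+(\nu-1)d$, one computes the residues of the Apéry elements mod $e=a_1$ explicitly. The goal is, for each element $u=a_i+ka_\nu$ that I want a formula for, to identify which $w_j$ are strictly smaller (these index the minimization in Corollary \ref{msym5}) and to evaluate $\sfloor{v_{\wh \jmath-\wh \imath}}-\ord(v_{\wh \jmath-\wh \imath})$ for the relevant differences. The structure of $\Ap(S)$ as a two-parameter grid indexed by $(n,k)$ makes the partial order $\pre$ transparent: $a_n+ka_\nu \pre a_{n'}+k'a_\nu$ roughly when the grid coordinates dominate, so the set of smaller Apéry elements is a down-set in this grid, and the minimum in Corollary \ref{msym5} should be attained at an extreme corner. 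I would evaluate the quantity $\tau+\sfloor{v_{\wh \jmath-\wh \imath}}-\ord(v_{\wh \jmath-\wh \imath})$ at the candidate minimizing $j$ and simplify using $\tau=q+1$ and the order formula; for case (1), where $u=a_2+ka_\nu$, the relevant complementary element turns out to be $a_2+(q-k)a_\nu$, giving $g(u)=\sfloor{a_2+(q-k)a_\nu}+k$, and similarly for case (2) with the reflected index $\nu-i+3$ accounting for the Apéry complementation $w_{e-1}-v=v_{\wh{e-1}-\cdot}$.

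For part (3), the claim is that every remaining nonzero $u$ attains the minimal Goto number $\tau$. This follows from the general principle in Proposition \ref{cons}: once $u$ is large enough that $\A(u)=\A(S)$ (which holds for $u\ge f+a_1+1$ by Proposition \ref{cons}(4)), we get $g(u)=\tau$, and for the intermediate elements one checks directly that $\A(u)$ already contains an index achieving the minimal entry of the Goto vector. Concretely, the elements singled out in (1) and (2) are exactly the Apéry elements together with their small translates that can still fail to reach $\tau$; everything else has $g(u)=\tau=\ord(w_{e-1})$ by Proposition \ref{maddsym5}(2), which is the workhorse here.

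The main obstacle I anticipate is the index arithmetic: correctly pinning down $\wh \jmath-\wh \imath$ and the floor $\sfloor{v_{\wh \jmath-\wh \imath}}$ as $(n,k)$ range over the grid, and verifying that the minimum over $0\le j<i$ in Corollary \ref{msym5} is achieved precisely at the complementary Apéry element that produces the stated closed form. This requires carefully tracking which Apéry element is $w_{\wh\jmath-\wh\imath}$ (equivalently, computing $v$-subscripts modulo $e$) and confirming that competing values of $j$ give no smaller result — a monotonicity argument in the grid coordinates. The reflection $i\mapsto \nu-i+3$ in part (2) is the subtle point and will need the symmetry $w_i+w_{e-1-i}=w_{e-1}$ from Proposition \ref{psym}(4) to justify; once the correct complementary element is identified, the floor-and-order simplification is routine.
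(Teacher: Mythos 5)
Your proposal follows essentially the same route as the paper's proof: Corollary \ref{grg} gives $M$-symmetry, Corollary \ref{msym5} is specialized using the explicit grid description $\Ap(S)=\{0,a_n+ka_\nu\}$ with $\ord(a_n+ka_\nu)=k+1$, the minimum over $j$ is located by a monotonicity argument, and part (3) falls out of Corollary \ref{msym5}(2). The index bookkeeping you defer is exactly what the paper's three Claims supply (namely $v_{\wh\imath}=v_{i\wh 1}$, the monotonicity of $\sfloor{w_i}-\ord(w_i)$, and $w_i=a_n+ka_\nu\iff i=k(\nu-1)+n-1$), which force the minimizer to be $j=0$ and identify the complementary element $w_{e-i}$ exactly as you predicted.
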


\begin{proof} By Corollary \ref{grg}, $S$ is $M$-symmetric and we can apply Corollary \ref{msym5}. Thus $g(w_i) = \min_{0\le j<i}\, \{\tau + \sfloor{v_{\wh{\jmath}-\wh{\imath}}} - \ord(v_{\wh{\jmath}-\wh{\imath}}) \}$ for $1\le i\le e-1$, and $g(u) = \tau$ for all other $u\in S$. Let $2\le n\le \nu$ and $0\le k\le q$. We have $\Ap(S) = \{0, a_n+ka_\nu\}$ for $2\le n+k(\nu-1) \le e$, and $\ord(a_n+ka_\nu) = k+1$. So $\tau = \ord(w_{e-1}) = \ord(a_2+qa_\nu) = q+1$, giving us (3). For (1) and (2), we need the following claims which we verify at the end of the proof

\begin{claims}\
\begin{enumerate}
\item $v_{\wh{\imath}} = v_{i\wh{1}}$ for $0\le i\le e-1$.
\item $\sfloor{w_i} - \ord(w_i) \le \sfloor{w_{i+1}} - \ord(w_{i+1})$ for $0\le i<e-1$.
\item $w_i = a_n + ka_\nu \iff i=k(\nu-1)+n-1$
\end{enumerate}
\end{claims}

\noindent Using Claim (1), $v_{\wh{\jmath}-\wh{\imath}} = v_{j\wh{1} - i\wh{1}} = v_{(e+j-i)\wh{1}} = w_{e+j-i}$, and by Claim (2) we have $$g(w_i) = \min_{0\le j<i}\, \left\{\tau + \floor{w_{e+j-i}} - \ord(w_{e+j-i}) \right\} = \tau + \floor{w_{e-i}} - \ord(w_{e-i})$$ for $1\le i\le e-1$.
Now let $w_i = a_n+ka_\nu$. By Claim (3) $i = k(\nu-1)+n-1$, and so $e-i = q(\nu-1)+2 -(k(\nu-1)+n-1) = (q-k)(\nu-1) -n+3$. We have two cases. If $n=2$, then $e-i = (q-k)(\nu-1) +1$ and $w_{e-i} = a_2 +(q-k)a_\nu$. If $3\le n\le \nu$, then $e-i = (q-k-1)(\nu-1) +\nu-n+2$ and $w_{e-i} = a_{\nu-n+3} +(q-k-1)a_\nu$. Thus we have
\begin{eqnarray*}
g(a_2+ka_\nu) &=& \tau + \floor{w_{e-i}} - \ord(w_{e-i})\\
&=& \tau + \floor{a_2 +(q-k)a_\nu} - \ord(a_2 +(q-k)a_\nu)\\
&=& q+1 + \floor{a_2 +(q-k)a_\nu} - (q-k+1)\\
&=& \floor{a_2 +(q-k)a_\nu} + k\\
\end{eqnarray*}
\noindent and for $3\le n\le \nu$
\begin{eqnarray*}
g(a_n+ka_\nu) &=& \tau + \floor{w_{e-i}} - \ord(w_{e-i})\\
&=& \tau + \floor{a_{\nu-n+3} +(q-k-1)a_\nu} - \ord(a_{\nu-n+3} +(q-k-1)a_\nu)\\
&=& q+1 + \floor{a_{\nu-n+3} +(q-k-1)a_\nu} - (q-k)\\
&=& \floor{a_{\nu-n+3} +(q-k-1)a_\nu} + k+1\\
\end{eqnarray*}

\noindent This proves (1) and (2) of the proposition.

Now it remains to verify the claims. Claim (3) is clear. For Claim (1), note that there exists a $d$ such the $a_n = e+(n-1)d$ for $2\le n\le \nu$. If $w_i = a_n+ka_\nu$, then
\begin{eqnarray*}
\wh{\imath} &\equiv& e+(n-1)d + k\big(e+(\nu-1)d\big) \mod e \\
&\equiv& (k+1)e + (k(\nu-1)+n-1)d \mod e\\
&\equiv& (k+1)e + (k(\nu-1)+n-1)\wh{1} \mod e\\
&\equiv& (k(\nu-1)+n-1)\wh{1} \mod e\\
&\equiv& i\wh{1} \mod e
\end{eqnarray*}
\noindent So $w_i = v_{\wh{\imath}} = v_{i\wh{1}}$. For Claim (2), let $w_i = a_n + ka_\nu$. We consider two cases. First assume that $w_{i+1} = a_{n+1} + ka_\nu$. Then $\ord(w_i) = \ord(w_{i+1})$, and thus the result follows. For the second case assume that $w_{i+1} = a_{2} + (k+1)a_\nu$. Then we have $w_i = a_{\nu} + ka_\nu$. Thus $\ord(w_i) +1= \ord(w_{i+1})$ and $\sfloor{w_i}+1 \le \sfloor{w_{i+1}}$. So $\sfloor{w_i} - \ord(w_i) = \sfloor{w_i} +1 - \ord(w_i) -1 \le \sfloor{w_{i+1}} - \ord(w_{i+1})$.
\end{proof}

Corollary \ref{com e=2} considers the special case when $S=<a_1,a_2>$. This case was also considered by  \citet[Theorem 5.5]{HS}, and the corollary extends their theorem.

\begin{cor}\label{com e=2} Let $S=<a_1,a_2>$ be a semigroup. Then
\begin{enumerate}
\item $g(ka_2) = a_2 +k -2 - \sfloor{ka_2}$ for $1\le k\le a_1-1$.
\item $g(u) = a_1-1=\tau$ for all other $u\in S$.
\end{enumerate}
\end{cor}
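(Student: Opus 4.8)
The plan is to derive this corollary directly from Theorem \ref{arith} by specializing to embedding dimension two. A semigroup $S=\langle a_1,a_2\rangle$ is (trivially) generated by the two-term arithmetic sequence $a_1,a_2$, and here $\nu=2$, $e=a_1$, $a_\nu=a_2$, and $q=(e-2)/(\nu-1)=a_1-2$ is an integer, so the hypotheses of Theorem \ref{arith} are met. First I would observe that the middle case (2) of that theorem requires $3\le i\le \nu=2$ and is therefore vacuous, so only cases (1) and (3) can contribute.

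Next I would sort out which elements fall under which case. Case (1) computes $g(a_2+ka_2)=g((k+1)a_2)$ for $0\le k\le q=a_1-2$, so after reindexing $j=k+1$ these are precisely $g(ja_2)$ for $1\le j\le a_1-1$, i.e. the values on the nonzero elements of $\Ap(S)=\{0,a_2,\dots,(a_1-1)a_2\}$. Every remaining nonzero $u\in S$ (including the multiples $ka_2$ with $k\ge a_1$) then falls under case (3), which gives $g(u)=q+1=a_1-1$; this common value is $\tau$, in agreement with Proposition \ref{tev}(3) for $\nu=2$. This yields statement (2) of the corollary at once.

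The remaining work, and the only genuine computation, is to rewrite the case (1) formula into the stated shape. Specializing Theorem \ref{arith}(1) gives $g(ja_2)=\sfloor{(a_1-j)a_2}+j-1$. Since $(a_1-j)a_2=a_1a_2-ja_2$, I would pull out the integer $a_2$ to obtain $\sfloor{(a_1-j)a_2}=a_2-\sceil{ja_2}$. The key point is that $\gcd(a_1,a_2)=1$ and $1\le j\le a_1-1$ force $a_1\nmid ja_2$, so $\sceil{ja_2}=\sfloor{ja_2}+1$; substituting yields $g(ja_2)=a_2+j-2-\sfloor{ja_2}$, which is statement (1).

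I do not anticipate any real obstacle: the argument is a specialization plus a one-line floor/ceiling identity, the latter hinging only on the coprimality of $a_1$ and $a_2$. The one point demanding care is the bookkeeping that shows case (1) of Theorem \ref{arith} together with the index range $1\le j\le a_1-1$ accounts for exactly the nonzero Ap\'ery elements, leaving all other elements of $S$ to case (3).
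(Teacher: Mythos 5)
Your proposal is correct and follows essentially the same route as the paper: the paper likewise specializes Theorem \ref{arith} to $\nu=2$ (where $q=a_1-2$), and converts $\bigl\lfloor\frac{(a_1-k)a_2}{a_1}\bigr\rfloor+k-1$ into $a_2+k-2-\bigl\lfloor\frac{ka_2}{a_1}\bigr\rfloor$ via the same floor/ceiling manipulation resting on $a_1\nmid ka_2$. Your explicit remarks that case (2) of Theorem \ref{arith} is vacuous for $\nu=2$ and that the index range matches the nonzero Ap\'ery elements are details the paper leaves implicit, but there is no substantive difference.
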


\begin{proof} $S$ is generated by an arithmetic sequence and $q=e-2/\nu-1 = e-2$. So we can apply Theorem \ref{arith}. Thus for $1\le k\le a_1-1$,
\begin{eqnarray*}
g(ka_2) &=& \floor{(a_1-k)a_2} +k-1\\
&=& a_2+k-1 +\floor{-ka_2}\\
&=& a_2+k-1 -\ceil{ka_2}\\
&=& a_2+k-2 -\floor{ka_2}\\
\end{eqnarray*}
This proves (1). Now for any other $0\ne u\in S$, $u\not\in \Ap(S)$. Thus $g(u) = q+1 = a_1-1$, and this proves (2).
\end{proof}

\begin{thm}\label{sym almost} Let $S$ be symmetric and of almost maximal embedding dimension. Then
\begin{enumerate}
\item $g(a_2) = \left\{ \begin{array} {c@{\quad}l} \sfloor{a_2+a_\nu} & \text{if } v_{-\wh{1}} = a_2+a_\nu\\ 1+\sfloor{a_k} & \text{if } v_{-\wh{1}} = a_k\end{array} \right.$
\item $g(a_{i+1}) = 1 + \sfloor{a_k}$ where $a_k=\min\{v_{\wh{\jmath}-\wh{\imath}}\st 0\le j<i\}$ for $2\le i\le \nu-1$
\item $g(a_2+a_\nu) = 1 + \sfloor{a_2}$
\item $g(u) = 2$ for all other $u\in S$.
\end{enumerate}
\end{thm}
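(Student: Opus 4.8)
The plan is to reduce the whole computation to Corollary~\ref{msym5}. By Lemma~\ref{almost max} a symmetric semigroup of almost maximal embedding dimension is $M$-symmetric, so Corollary~\ref{msym5} applies and gives
\[
g(w_i) = \min_{0\le j<i}\left\{\tau + \sfloor{v_{\wh{\jmath}-\wh{\imath}}} - \ord(v_{\wh{\jmath}-\wh{\imath}})\right\}\qquad(1\le i\le e-1),
\]
together with $g(u)=\tau$ for every other nonzero $u\in S$; this last clause is precisely statement (4). From the proof of Proposition~\ref{mpureadd}(3) the Ap\'ery set is $\Ap(S)=\{0,a_2,\dots,a_\nu,a_2+a_\nu\}$, so $w_i=a_{i+1}$ for $1\le i\le \nu-1$ and $w_{e-1}=a_2+a_\nu$, with $\ord(a_k)=1$ and $\ord(a_2+a_\nu)=2$; also $\tau=2$ by Proposition~\ref{tev}(3). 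Since $\nu=e-1$, the indices $1,2,\dots,\nu-1,e-1$ exhaust $\{1,\dots,e-1\}$, so statements (1)--(3) record exactly the values of $g(w_i)$ on the nonzero Ap\'ery elements.

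For fixed $i$ write $z_j=v_{\wh{\jmath}-\wh{\imath}}$. Because the hat map is a bijection, the $z_j$ with $0\le j<i$ are $i$ distinct Ap\'ery elements, none equal to $0$ (as $\wh{\jmath}\ne\wh{\imath}$ for $j\ne i$). Hence each $z_j$ is either a generator $a_k$, contributing $\tau+\sfloor{a_k}-1 = 1+\sfloor{a_k}$ to the minimand, or the single order-$2$ element $a_2+a_\nu$, contributing $\sfloor{a_2+a_\nu}$. The one inequality that does real work is
\[
\sfloor{a_2+a_\nu}\ \ge\ \sfloor{a_2}+\sfloor{a_\nu}\ \ge\ 1+\sfloor{a_k}
\]
for every generator $a_k$, using superadditivity of the floor, $\sfloor{a_2}\ge 1$ (since $a_2>a_1$), and $a_k\le a_\nu$. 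This says the contribution of $a_2+a_\nu$ never undercuts that of any generator.

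With these facts the three cases fall out. For $i=1$ (statement (1)) the only term is $j=0$, so $g(a_2)=\tau+\sfloor{v_{-\wh{1}}}-\ord(v_{-\wh{1}})$, and splitting on whether $v_{-\wh{1}}=a_2+a_\nu$ or $v_{-\wh{1}}=a_k$ gives the stated two-line formula. For $2\le i\le\nu-1$ (statement (2)) the set $\{z_j\}$ consists of $i\ge 2$ distinct nonzero Ap\'ery elements, of which at most one (namely $a_2+a_\nu$) is not a generator; so a generator occurs, and by the displayed inequality the minimum is attained at the smallest generator present, which is $\min_j z_j=:a_k$, giving $g(a_{i+1})=1+\sfloor{a_k}$. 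For $i=e-1$ (statement (3)) the $z_j$ run over all $e-1$ nonzero Ap\'ery elements, so $a_2$ occurs among them; since $a_2$ minimizes $1+\sfloor{a_k}$ over generators and dominates $a_2+a_\nu$, we obtain $g(a_2+a_\nu)=1+\sfloor{a_2}$.

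I expect the substance to lie not in any single estimate but in the counting step for statement (2): recognizing that the $z_j$ are distinct nonzero Ap\'ery elements and that exactly one nonzero Ap\'ery element (the order-$2$ element $a_2+a_\nu$) fails to be a generator, so that having $i\ge 2$ of them forces a generator into the set and thereby makes the minimizer a generator. The only genuinely computational input is the inequality $\sfloor{a_2+a_\nu}\ge 1+\sfloor{a_k}$, which is what lets the generator terms always dominate the order-$2$ term.
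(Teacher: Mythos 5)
Your proof is correct and follows essentially the same route as the paper: reduce to Corollary \ref{msym5} via $M$-symmetry, read off the Ap\'ery set from Proposition \ref{mpureadd}(3), and use the inequality $\sfloor{a_2+a_\nu}\ge 1+\sfloor{a_k}$ together with the observation that for $i\ge 2$ at least one of the $v_{\wh{\jmath}-\wh{\imath}}$ must be a generator. The only cosmetic difference is that you derive the key inequality from superadditivity of the floor rather than from the identity $a_{\nu+2-k}+a_k=a_2+a_\nu$; both are fine.
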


\begin{proof} By Corollary \ref{grg}, $S$ is $M$-symmetric and we can apply Corollary \ref{msym5}. We have $\Ap(S)\setminus\{0\} = \{a_2,a_3,\dots,a_\nu,a_i+a_j\}$ where $i+j = \nu+2$. (4) follows from noting that for $u\not\in \Ap(S)$, $g(u) = g(a_1) = \ord(a_2+a_\nu) = 2$. For (1), we note that $g(a_2) = 2+\sfloor{\ds{v}_{-\wh{1}}} - \ord(v_{-\wh{1}})$.

To see (2), let $2\le i\le \nu-1$. Then
$g(a_{i+1}) = \min\{2 + \sfloor{v_{\wh{\jmath}-\wh{\imath}}} - \ord(v_{\wh{\jmath}-\wh{\imath}}) \st 0\le j<i\}$.
If $v_{\wh{\jmath}-\wh{\imath}} = a_k$ for some $k$, then $2 + \sfloor{v_{\wh{\jmath}-\wh{\imath}}} - \ord(v_{\wh{\jmath}-\wh{\imath}}) = 1+\sfloor{a_k}$. Otherwise $v_{\wh{\jmath}-\wh{\imath}} = a_2+a_\nu$ and $2 + \sfloor{v_{\wh{\jmath}-\wh{\imath}}} - \ord(v_{\wh{\jmath}-\wh{\imath}})=\sfloor{a_2+a_\nu}$. Note that for $2\le k\le \nu$, $1+\sfloor{a_k} = \sfloor{a_1+a_k} \le \floor{a_{\nu+2-k} + a_k} = \sfloor{a_2+a_\nu}$. Thus, since $\#\A(a_{i+1}) >1$,
$$g(a_{i+1}) = 1 + \floor{a_k} \text{ where } a_k=\min\{v_{\wh \jmath-\wh \imath}\st 0\le j<i\}$$

\noindent Similarly for (3), we have $g(a_2+a_\nu) = 1 + \sfloor{a_k} \text{ where } a_k=\min\{v_{\wh{\jmath}-\wh{e-1}}\st 0\le j<e-1\} = a_2$.
\end{proof}

\begin{thm}\label{maxbed} Let $S$ have maximal embedding dimension.
\begin{enumerate}
\item $g(a_2) = \sfloor{a_k}$ where $a_k=v_{-\wh{1}}$
\item $g(a_{i+1}) = \sfloor{a_k}$ where $a_k = \min\{v_{\wh{\jmath}-\wh{\imath}} \st 0\le j<i\}$ for $1< i< \nu-1$
\item $g(a_\nu) = \sfloor{a_2}$
\item $g(u)=1$ for all other $u\in S$.
\end{enumerate}
\end{thm}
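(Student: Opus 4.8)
The plan is to compute the entire Goto vector $(\sigma(1),\dots,\sigma(e))$ in closed form and then read off each $g(a_{i+1})$ from Corollary \ref{cmain} together with the description of $\A(a_{i+1})$ supplied by Lemma \ref{au}. First I would record the structural features of the maximal embedding dimension case: $\Ap(S)=\{0,a_2,\dots,a_\nu\}$, so that $w_i=a_{i+1}$ for $1\le i\le e-1$, every nonzero Apéry element is a minimal generator of order $1$, and $\maxap(S)=\{a_2,\dots,a_\nu\}$. By Proposition \ref{mpureadd} such an $S$ is $M$-additive, so Lemma \ref{garcia} gives the exact additivity $\ord(w+ta_1)=\ord(w)+t$ for $w\in\Ap(S)$ and $t\ge 0$; in particular $\ord(ta_1)=t$ and every nonzero Apéry element has order $1$.

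The heart of the argument is the claim that $\sigma(\wh\imath)=\sfloor{v_{-\wh\imath}}$ for $1\le i\le e-1$. Applying the second displayed form in Lemma \ref{gs} and writing $v=v_{-\wh\imath}$ (a nonzero Apéry element, since $\wh\imath\in\{1,\dots,e-1\}$) together with $\rho=v-a_1\sfloor{v}\in\{1,\dots,a_1-1\}$, the formula collapses to $\sigma(\wh\imath)=\max\{\ord(w-\rho)\st w\in\maxap(S)\}$. Taking $w=v$ gives $\ord(v-\rho)=\ord(a_1\sfloor{v})=\sfloor{v}$, hence $\sigma(\wh\imath)\ge\sfloor{v}$. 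For the reverse inequality I would, for any $w\in\maxap(S)$ with $w-\rho\in S$, add $v$: since $(w-\rho)+v=w+a_1\sfloor{v}$, the $M$-additivity yields $\ord\big((w-\rho)+v\big)=\ord(w)+\sfloor{v}=1+\sfloor{v}$, while the superadditivity $\ord(s+s')\ge\ord(s)+\ord(s')$ on $S$ gives $\ord\big((w-\rho)+v\big)\ge\ord(w-\rho)+\ord(v)=\ord(w-\rho)+1$. Combining these (the case $w-\rho\notin S$ being immediate from the convention $\ord=-1$) forces $\ord(w-\rho)\le\sfloor{v}$, so $\sigma(\wh\imath)=\sfloor{v_{-\wh\imath}}$.

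With the Goto vector in hand I would apply Lemma \ref{au} to $u=a_{i+1}=w_i$: the largest Apéry element below $u$ is $w_{i-1}$, so $\A(a_{i+1})=\{\alpha_j\st 0\le j\le i-1\}$ where $\alpha_j\in\{1,\dots,a_1\}$ and $\alpha_j\equiv\wh\imath-\wh\jmath\pmod{a_1}$. For $j<i$ the indices satisfy $\wh\jmath\ne\wh\imath$ modulo $e$, so $\alpha_j\ne e$ and each $v_{\wh\jmath-\wh\imath}$ is a nonzero Apéry element; the Goto vector formula then gives $\sigma(\alpha_j)=\sfloor{v_{-\alpha_j}}=\sfloor{v_{\wh\jmath-\wh\imath}}$. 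Corollary \ref{cmain} produces
\begin{equation*}
g(a_{i+1})=\min_{0\le j<i}\sigma(\alpha_j)=\min_{0\le j<i}\sfloor{v_{\wh\jmath-\wh\imath}}=\sfloor{\,\min_{0\le j<i}v_{\wh\jmath-\wh\imath}\,},
\end{equation*}
the last equality because $x\mapsto\sfloor{x}$ is nondecreasing. This is exactly (1) (take $j=0$, where $\wh 0=e$, so the minimand is $v_{-\wh 1}$) and (2). For (3), with $i=e-1$, the residues $\wh\jmath-\wh{e-1}$ for $0\le j\le e-2$ run through all nonzero classes mod $a_1$, so the minimand ranges over all of $\{a_2,\dots,a_\nu\}$ and equals $a_2$.

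Finally, for (4) I would note $\tau=1$ by Proposition \ref{tev}(3), and $\sigma(e)=g(a_1)=1$ by Proposition \ref{gAp} since every Apéry element has order at most $1$. Any $0\ne u\in S$ other than $a_2,\dots,a_\nu$ lies outside $\Ap(S)$, whence $u-a_1\in S$, so $e\in\A(u)$ and $g(u)\le\sigma(e)=1$, forcing $g(u)=1$. The only genuinely delicate point is the upper bound $\sigma(\wh\imath)\le\sfloor{v_{-\wh\imath}}$; once the superadditivity-plus-$M$-additivity device is in place the rest is bookkeeping with Apéry residues. I would take care over the boundary indices — that $\wh 0=e$ and that $\alpha_j\ne e$ for $j<i$, so the order-$1$ formula for $\sigma$ genuinely applies — to confirm that (1) and (3) really are the $i=1$ and $i=e-1$ specializations of the general computation.
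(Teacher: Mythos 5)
Your proof is correct and follows essentially the same route as the paper: both compute the full Goto vector via Lemma \ref{gs}, establishing $\sigma(\,\wh{\imath}\,)=\sfloor{v_{-\wh{\imath}}}$ for $1\le i\le e-1$, and then read off (1)--(3) from Lemma \ref{au} with Corollary \ref{cmain} and (4) from Proposition \ref{gAp}. The only difference is local: where you bound $\ord(w-\rho)$ by combining superadditivity of $\ord$ with the $M$-additivity from Lemma \ref{garcia}, the paper instead rewrites $a_k-v_{-\wh{\imath}}$ via Lemma \ref{apnot}(2) and uses that the resulting $t$ is positive because $a_k$ is a minimal generator -- an equally short computation.
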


\begin{proof} $\Ap(S) = \{0,a_2,a_3,\dots,a_\nu\}$. Thus by Lemma \ref{gs}, we have $\sigma(\,\wh{\imath}\,) = \max\{\ord\big(a_k-v_{-\wh{\imath}} + a_1\sfloor{v_{-\wh{\imath}}}\big)\st 2\le k\le \nu\}$. For $1\le i\le e-1$, if $a_k=v_{-\wh \imath}$, then
$\ord\big(a_k-v_{-\wh \imath} + a_1\sfloor{v_{-\wh \imath}}\big) = \sfloor{v_{-\wh \imath}}$. Otherwise $a_k \ne v_{-\wh \imath}$ and
\begin{eqnarray*}
\ord\left(a_k-v_{-\wh \imath} + a_1\floor{v_{-\wh \imath}}\right) &=& \ord\left(v_{\wh{k-1}-\wh{\imath}} + a_1\left(\floor{v_{-\wh \imath}}-t\right)\right) \text{ where } t>0\\
&=& \left\{ \begin{array} {c@{\quad}l} -1 & \text{if } \floor{v_{-\wh \imath}}<t\\ \floor{v_{-\wh \imath}}-t' & \text{otherwise }\end{array}  \right.
\end{eqnarray*}
\noindent where $t'\ge0$. From this we conclude that $\sigma(\,\wh{\imath}\,) = \sfloor{v_{-\wh \imath}}$ if $1\le i\le e-1$.

Now (1), (2), and (3) follow from Lemma \ref{au}, and (4) follows from Proposition \ref{gAp} since for $u\not\in \Ap(S)$, $g(u) = g(a_1) = \ord(a_\nu) = 1$.
\end{proof}

To apply Theorems \ref{sym almost} and \ref{maxbed}, the equations $a_k=v_{-\wh{1}}$ and $a_k = \min\{v_{\wh{\jmath}-\wh{\imath}} \st 0\le j<i\}$ must be solved. The proof of part (3) of Theorem \ref{ele4} illustrates how this can be done. The proofs of the other parts of Theorems \ref{ele4} and \ref{ee5} for which Theorems \ref{sym almost} and \ref{maxbed} apply follow similarly and are omitted. The same idea also works when Proposition \ref{maddsym5} is used for part (2) of Theorem \ref{ee5}.

\begin{thm}\label{ele4} Let $S$ be a semigroup with $e\le 4$. Then one of the following assertions holds.
\begin{enumerate}
\item[$e=2$]
\item $S=<2,a_2>$
\begin{itemize}
\item[] $g(a_2) = \sfloor[2]{a_2}$
\item[] $g(u) = 1$ for all other $u\in S$
\end{itemize}
\item[]
\item[$e=3$]
\item $S=<3,a_2>$
\begin{itemize}
\item[] $g(a_2) = a_2 -\sfloor[3]{a_2}-1$
\item[] $g(2a_2) = a_2 -\sfloor[3]{2a_2}$
\item[] $g(u) = 2$ for all other $u\in S$
\end{itemize}
\item $S=<3,a_2,a_3>$
\begin{itemize}
\item[] $g(a_2) = \sfloor[3]{a_3}$
\item[] $g(a_3) = \sfloor[3]{a_2}$
\item[] $g(u) = 1$ for all other $u\in S$
\end{itemize}
\item[]
\item[e=4]
\item $S=<4,a_2>$
\begin{itemize}
\item[] $g(a_2) = a_2 -\sfloor[4]{a_2}-1$
\item[] $g(2a_2) = a_2 -\sfloor[4]{2a_2}$
\item[] $g(3a_2) = a_2 -\sfloor[4]{3a_2}+1$
\item[] $g(u) = 3$ for all other $u\in S$
\end{itemize}
\item $S=<4,a_2,a_3>$
\begin{enumerate}
\item $S$ is symmetric.
\begin{itemize}
\item[] $g(a_2) = \left\{ \begin{array} {c@{\quad}l} 1+\sfloor[4]{a_2} & \text{if } a_2 \text{ is even}\\ \sfloor[4]{a_2+a_3} & \text{ otherwise} \end{array} \right.$
\item[] $g(a_3) = 1+\sfloor[4]{a_3}$
\item[] $g(a_2+a_3) = 1+\sfloor[4]{a_2}$
\item[] $g(u) = 2$ for all other $u \in S$
\end{itemize}
\item $S$ is not symmetric and $S\ne <4,5,7>$.
\begin{itemize}
\item[] $g(a_2) = \sfloor[4]{a_3}$
\item[] $g(a_3) = 1+\sfloor[4]{a_2}$
\item[] $g(2a_2) = \left\{ \begin{array} {c@{\quad}l} \sfloor[4]{2a_2} & \text{if } 2a_2<a_3\\ \sfloor[4]{a_3} & \text{if } a_3=a_2+2\\ 1+\sfloor[4]{a_2} & \text{otherwise } \end{array} \right.$
\item[] $g(u) = 2$ for all other $u\in S$
\end{itemize}
\item $S = <4,5,7>$.
\begin{itemize}
\item[] $g(4) = 2$
\item[] $g(7) = 2$
\item[] $g(u) = 1$ for all other $u \in S$
\end{itemize}
\end{enumerate}
\item $S=<4,a_2,a_3,a_4>$
\begin{itemize}
\item[] $g(a_2) = \left\{ \begin{array} {c@{\quad}l} \sfloor[4]{a_2} & \text{if } a_2 \text{ is even}\\ \sfloor[4]{a_3} & \text{if } a_4 \text{ is even}\\\sfloor[4]{a_4} & \text{if } a_3 \text{ is even} \end{array} \right.$
\item[] $g(a_3) = \left\{ \begin{array} {c@{\quad}l} \sfloor[4]{a_2} & \text{if } a_4 \text{ is even}\\ \sfloor[4]{a_3} & \text{otherwise } \end{array} \right.$
\item[] $g(a_4) = \sfloor[4]{a_2}$
\item[] $g(u) = 1$ for all other $u\in S$
\end{itemize}
\end{enumerate}
\end{thm}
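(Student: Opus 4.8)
The plan is to dispatch each case by matching the semigroup to whichever computation from Sections~\ref{minsect} and~\ref{comp} applies, reducing the theorem to substitution plus modular bookkeeping. The three embedding-dimension-two cases, (1)~$\langle 2,a_2\rangle$, (2)~$\langle 3,a_2\rangle$, and (4)~$\langle 4,a_2\rangle$, follow directly from Corollary~\ref{com e=2} by substituting $a_1\in\{2,3,4\}$ into $g(ka_2)=a_2+k-2-\sfloor{ka_2}$ for $1\le k\le a_1-1$ and $g(u)=a_1-1$ otherwise; for $\langle 2,a_2\rangle$ the lone value $a_2-1-\sfloor[2]{a_2}$ collapses to $\sfloor[2]{a_2}$ because $a_2$ is odd. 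The two maximal embedding dimension cases, (3)~$\langle 3,a_2,a_3\rangle$ and (6)~$\langle 4,a_2,a_3,a_4\rangle$, follow from Theorem~\ref{maxbed}, where the remaining work is to evaluate $v_{-\wh 1}$ and the minima $\min\{v_{\wh\jmath-\wh\imath}\}$ appearing there. Since $\Ap(S)\setminus\{0\}=\{a_2,\dots,a_\nu\}$ consists of the minimal generators, their residues modulo $e$ form a permutation of $\{1,\dots,e-1\}$, so the even generator is precisely the one with residue $2$ modulo $4$; this is what drives the parity conditions in the statement. I would carry out the residue computation in full only for~(3), as flagged just before the theorem, and note that~(6) is identical in spirit.

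For the almost maximal embedding dimension semigroup $\langle 4,a_2,a_3\rangle$ I would split on symmetry. If $S$ is symmetric~(5a), then Lemma~\ref{almost max} makes $S$ $M$-symmetric, so Theorem~\ref{sym almost} applies with $\Ap(S)=\{0,a_2,a_3,a_2+a_3\}$ (the extra Ap\'ery generator $a_i+a_j$ with $i+j=\nu+2=5$ is forced to be $a_2+a_3$). The displayed formulas then come from the same residue analysis: when $a_2\equiv 2\pmod 4$ one finds $v_{-\wh 1}=a_2$, giving the branch $1+\sfloor[4]{a_2}$; when $a_2$ is odd one checks that $a_3\equiv 2\pmod 4$ and $v_{-\wh 1}=a_2+a_3$, giving $\sfloor[4]{a_2+a_3}$.

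The genuinely separate work is the non-symmetric subcase. By Lemma~\ref{almost max} such an $S$ is neither symmetric nor $M$-pure, so none of the clean reductions apply and I would instead compute the Goto vector directly. From the proof of Proposition~\ref{mpureadd}(5) the Ap\'ery set is $\{0,a_2,a_3,2a_2\}$; one checks that $\maxap(S)=\{a_3,2a_2\}$ (non-symmetry forces $\#\maxap(S)>1$, while $a_2\pre 2a_2$ removes $a_2$), so by Lemma~\ref{maxminchar}(2) the set $T=\{a_3-a_1,\,2a_2-a_1\}$ has only two elements. Lemma~\ref{gs} then writes each $\sigma(\wh\imath)$ as a maximum over these two elements, and Corollary~\ref{cmain} together with the description of $\A(u)$ from Corollary~\ref{Acor} yields $g(a_2)$, $g(a_3)$, and $g(2a_2)$; the three-way split in $g(2a_2)$ records the configurations $2a_2<a_3$, $a_3=a_2+2$, and otherwise, in which the relevant order changes. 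Case~(5c), $\langle 4,5,7\rangle$, must be isolated because it is the unique non-symmetric $\langle 4,a_2,a_3\rangle$ with $\tau<g(a_1)$: a short finite check shows $\sigma(1)=1$ forces $a_3-3=a_1$ and $2a_2-3=a_3$, i.e. $a_2=5,\ a_3=7$, so that here the generic value is $1$ rather than $2$. For it I would simply quote the direct computation of Example~\ref{457}.

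The main obstacle is this non-symmetric situation~(5b)--(5c). Because $S$ fails to be $M$-pure, the order function is not additive along the Ap\'ery set, so one cannot read $\ord$ off a single top element and must instead track the orders of $a_3-a_1+\alpha$ and $2a_2-a_1+\alpha$ individually for each residue $\alpha$ (here $g(a_1)=\max\{\ord(w)\st w\in\Ap(S)\}=2$ by Proposition~\ref{gAp}). The resulting case distinctions, together with the separate verification that $\langle 4,5,7\rangle$ is the sole multiplicity-$4$ semigroup with $\tau<g(a_1)$, are where the care lies; everything else is substitution into Corollary~\ref{com e=2} and Theorems~\ref{sym almost} and~\ref{maxbed}, followed by the parity and residue bookkeeping already exhibited for~(3).
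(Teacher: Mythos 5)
Your proposal is correct and follows essentially the same route as the paper: embedding-dimension-two cases via Corollary~\ref{com e=2}, maximal embedding dimension via Theorem~\ref{maxbed} with the residue computation done explicitly for $\langle 3,a_2,a_3\rangle$, the symmetric case (5a) via Theorem~\ref{sym almost}, $\langle 4,5,7\rangle$ via Example~\ref{457}, and the non-symmetric case (5b) by a direct computation of the Goto vector from $\maxap(S)=\{a_3,2a_2\}$ using Lemma~\ref{gs} and the sets $\A(u)$. The only cosmetic difference is that you handle $\langle 2,a_2\rangle$ through Corollary~\ref{com e=2} where the paper cites Theorem~\ref{maxbed}; both apply since that semigroup has both embedding dimension two and maximal embedding dimension.
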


\begin{proof} (1) and (6) follow from Theorem \ref{maxbed}; (2) and (4) follow from Corollary \ref{com e=2}; (5a) follows from Theorem \ref{sym almost}; and (5c) was worked out in Example \ref{457}.

(3) Theorem \ref{maxbed} can be applied. We have the following information
\begin{enumerate}
\item[] $0 = v_{\wh 0} = v_{0\wh 1}$
\item[] $a_2 = v_{\wh 1}$
\item[] $a_3 = v_{\wh 2} = v_{2\wh 1}$
\end{enumerate}

\noindent giving us $v_{-\wh{1}}=v_{2\wh 1}=a_3$, and $a_2 = \min\{v_{\wh{\jmath}-\wh{2}} \st 0\le j<2\}$ since $v_{-\wh{2}}=v_{\wh 1}=a_2$ and $v_{\wh{1}-\wh{2}} =v_{\wh{1}-2\wh 1} = v_{-\wh{1}}=v_{2\wh 1}=a_3$.

For (5b) we have $\Ap(S) = \{0,a_2,2a_2,a_3\}$ or $\Ap(S) = \{0,a_2,a_3,2a_2\}$. In either case $\maxap\{a_3,2a_2\}$. Let $w_j=a_3$ and $w_k=2a_2$ where $\{j,k\} = \{2,3\}$. We have $\sigma(\,\wh{0}\,) = \ord{2a_2} = 2$. The other elements of the Goto set can be determined using Lemma \ref{gs}. First notice that $v_{-\wh{1}} = a_3$, $v_{-\wh{\jmath}} = a_2$, and $v_{-\wh{k}} = 2a_2$. Thus\\

$\sigma(\,\wh{1}\,) = \sfloor{a_3}$ since
\begin{eqnarray*}
\ord\left(a_3 - a_3+a_1\floor{a_3}\right)\;\;\! &=& \floor{a_3}\text{, and}\\
\ord\left(2a_2- a_3+a_1\floor{a_3}\right) &=& \ord\left(a_3+a_1\left(\floor{a_3}-t\right)\right) \text{ where } t>0\\
&=& \left\{ \begin{array} {c@{\quad}l} \floor{a_3}-t' \text{ where } t'\ge 0 & \text{if } \floor{a_3}\ge t'\\ -1 & \text{otherwise } \end{array} \right.
\end{eqnarray*}

$\sigma(\,\wh{\jmath}\,) = 1+ \sfloor{a_2}$ since
\begin{eqnarray*}
\ord\left(2a_2 - a_2+a_1\floor{a_2}\right) &=& 1+\floor{a_2}\text{, and}\\
\ord\left(a_3- a_2+a_1\floor{a_2}\right)\;\; &=& \ord\left(2a_2+a_1\left(\floor{a_2}-t\right)\right) \text{ where } t>0\\
&=& \left\{ \begin{array} {c@{\quad}l} \floor{a_2}-t' \text{ where } t'\ge -1 & \text{if } \floor{a_2}\ge t'\\ -1 & \text{otherwise } \end{array} \right.
\end{eqnarray*}

$\sigma(\,\wh{k}\,) = \sfloor{2a_2}$ since
\begin{eqnarray*}
\ord\left(2a_2 - 2a_2+a_1\floor{2a_2}\right) &=& \floor{2a_2}\text{, and}\\
\ord\left(a_3- 2a_2+a_1\floor{2a_2}\right)\;\; &=& \ord\left(a_2+a_1\left(\floor{2a_2}-t\right)\right) \text{ where } t>0\\
&=& \left\{ \begin{array} {c@{\quad}l} \floor{2a_2}-t' \text{ where } t'\ge 0 & \text{if } \floor{2a_2}\ge t'\\ -1 & \text{otherwise } \end{array} \right.
\end{eqnarray*}

Clearly $\sigma(\,\wh{0}\,)$ is less than or equal to both $\sigma(\,\wh{\jmath}\,)$ and $\sigma(\,\wh{k}\,)$. Moreover, by the assumption that $S\ne<4,5,7>$, $\sigma(\,\wh{0}\,)$ is less than or equal to $\sigma(\,\wh{1}\,)$. Thus $g(a_1) = \sigma(\,\wh{0}\,) = \tau$, and for $u\not\in \Ap(S)$, $g(u) = g(a_1) = \sigma(\,\wh{0}\,) = 2$.

Now to compute the Goto numbers for the nonzero elements of $\Ap(S)$, we consider some cases. Using Lemma \ref{au} we have
\begin{enumerate}
\item If $2a_2 < a_3$
\begin{enumerate}
\item $\A(2a_2) = \{\wh{1},\wh{k}\}$
\item $\A(a_3) = \{\wh{1},\wh{\jmath},\wh{k}\}$
\item $\sigma(\,\wh{0}\,)\le \sigma(\,\wh{\jmath}\,)\le \sigma(\,\wh{k}\,)\le \sigma(\,\wh{1}\,)$
\end{enumerate}
\item If $a_3< a_1+a_2$
\begin{enumerate}
\item $\A(a_3) = \{\wh{\jmath},\wh{k}\}$
\item $\A(2a_2) = \{\wh{1},\wh{\jmath},\wh{k}\}$
\item $\sigma(\,\wh{0}\,)\le \sigma(\,\wh{1}\,)\le \sigma(\,\wh{\jmath}\,)\le \sigma(\,\wh{k}\,)$
\end{enumerate}
\item Otherwise
\begin{enumerate}
\item $\A(a_3) = \{\wh{\jmath},\wh{k}\}$
\item $\A(2a_2) = \{\wh{1},\wh{\jmath},\wh{k}\}$
\item $\sigma(\,\wh{0}\,)\le \sigma(\,\wh{\jmath}\,)\le \sigma(\,\wh{1}\,)\le \sigma(\,\wh{k}\,)$
\end{enumerate}
\end{enumerate}

\noindent Now (5b) follows from Corollary \ref{cmain} once we observe that if $a_3 < a_1+a_2$, then $a_3 = a_2+2$.
\end{proof}

\begin{thm}\label{ee5} Let $S$ be a symmetric semigroup with $e=5$. Then one of the following assertions holds.
\begin{enumerate}
\item $S=<5,a_2>$
\begin{itemize}
\item[] $g(a_2) = a_2 - \sfloor[5]{a_2}-1$
\item[] $g(2a_2) = a_2 - \sfloor[5]{2a_2}$
\item[] $g(3a_2) = a_2 - \sfloor[5]{3a_2}+1$
\item[] $g(4a_2) = a_2 - \sfloor[5]{4a_2}+2$
\item[] $g(u) = 4$ for all other $u\in S$
\end{itemize}
\item[]
\item $S=<5,a_2,a_3>\ne <5,6,9>$
\begin{itemize}
\item[] $g(a_2) = 1+\sfloor[5]{a_3}$
\item[] $g(a_3) = 2+\sfloor[5]{a_2}$
\item[] $g(2a_2) = 1+\sfloor[5]{a_3}$
\item[] $g(3a_2) = 2+\sfloor[5]{a_2}$
\item[] $g(u) = 3$ for all other $u\in S$
\end{itemize}
\item[]
\item $S=<5,6,9>$
\begin{itemize}
\item[] $g(5) = 3$
\item[] $g(9) = 3$
\item[] $g(14) = 3$
\item[] $g(u) = 2$ for all other $u\in S$
\end{itemize}
\item[]
\item $S=<5,a_2,a_3,a_4>$
\begin{itemize}
\item[] $g(a_2) = \left\{ \begin{array} {c@{\quad}l} 1+\sfloor[5]{a_3} & \text{if } 5\st a_2 +a_3 \\ \sfloor[5]{2a_3} & \text{otherwise} \end{array} \right.$
\item[] $g(a_3) = \left\{ \begin{array} {c@{\quad}l} 1+\sfloor[5]{a_2} & \text{if } 5\st a_2 +a_3\\ 1+\sfloor[5]{a_4} & \text{otherwise} \end{array} \right.$
\item[] $g(a_4) = 1+\sfloor[5]{a_3}$
\item[] $g(2a_3) = 1+\sfloor[5]{a_2}$
\item[] $g(u) = 2$ for all other $u\in S$
\end{itemize}
\end{enumerate}
\end{thm}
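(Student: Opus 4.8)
The plan is to organize the proof by the embedding dimension $\nu$. Since $e=5$ and $S$ is symmetric, Lemma \ref{symax embed} forbids $\nu = e = 5$, so $\nu\in\{2,3,4\}$; these give case (1), cases (2)--(3), and case (4). In every case the first step is to write down $\Ap(S)$ explicitly, together with the residues of its elements modulo $5$, and then to invoke whichever earlier computation fits the shape of $S$.

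Case (1) is immediate: $S=<5,a_2>$ is generated by a two-term arithmetic sequence, so Corollary \ref{com e=2} applies directly, and setting $a_1=5$, $k=1,2,3,4$ in $g(ka_2)=a_2+k-2-\sfloor{ka_2}$ produces the four listed values with $g(u)=a_1-1=4$ elsewhere. Case (4), $S=<5,a_2,a_3,a_4>$, has almost maximal embedding dimension; being symmetric it is $M$-symmetric by Corollary \ref{grg}, so Theorem \ref{sym almost} governs it. The structural input is $\Ap(S)=\{0,a_2,a_3,a_4,2a_3\}$ with the symmetry relation $a_2+a_4=2a_3$ (so the top element is at once $2a_3$ and $a_2+a_\nu$); parts (1)--(4) of Theorem \ref{sym almost} then specialize once the elements $v_{-\wh 1}$ and $\min\{v_{\wh\jmath-\wh\imath}\}$ are named.

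Case (2), $S=<5,a_2,a_3>\ne <5,6,9>$, is symmetric with $\nu=e-2$, hence $M$-additive by Proposition \ref{grcm}; moreover Theorem \ref{except} supplies $\tau=g(a_1)$, so Proposition \ref{maddsym5} applies even though $S$ is \emph{not} $M$-pure (Lemma \ref{3lem}). The relevant Apéry set is $\Ap(S)=\{0,a_2,a_3,2a_2,3a_2\}$ with $3a_2=2a_3$, forcing $a_2$ even and $a_3=\tfrac32 a_2<2a_2$, so that $w_1=a_2$, $w_2=a_3$, $w_3=2a_2$, $w_4=3a_2$; evaluating $g(w_i)=\min_{0\le j<i}\{\ord(v_{\wh{e-1}+\wh\imath-\wh\jmath})+\sfloor{v_{\wh\jmath-\wh\imath}}\}$ at $i=1,2,3,4$ gives the four stated formulas. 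The genuinely exceptional semigroup $S=<5,6,9>$ (case (3)), where $\tau<g(a_1)$ so that Proposition \ref{maddsym5} is unavailable, I would treat by hand: by Proposition \ref{symgv} its Goto vector is $(\ord(14),\ldots,\ord(18))=(2,3,3,3,3)$, and Corollary \ref{cmain} applied to the sets $\A(5)=\{5\}$, $\A(9)=\{3,4\}$, $\A(14)=\{2,3,4,5\}$ then yields $g(5)=g(9)=g(14)=3$ and $g(u)=2$ otherwise.

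The main obstacle is the index bookkeeping concealed in phrases like ``name $v_{-\wh 1}$'' and ``compute $\min\{v_{\wh\jmath-\wh\imath}\}$'': each subscript must be reduced modulo $5$ and matched against the residues of the Apéry generators, and this is where the congruences $a_2+a_3\equiv 0$ (case (2)) and $a_2+a_4\equiv 2a_3$ (case (4)) do the real work. In case (4) the dichotomy ``$5\mid a_2+a_3$ or not'' is exactly $v_{-\wh 1}=a_3$ versus $v_{-\wh 1}=2a_3$, and one must exclude the a priori possibility $v_{-\wh 1}=a_4$ by noting $a_2+a_4\equiv 2a_3\not\equiv 0\pmod 5$. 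Once the correct term is isolated, checking that it realizes the minimum reduces to elementary floor estimates such as $1+\sfloor{a_2}\le \sfloor{2a_2}$; these hold precisely because $a_2\ge 8$ in case (2), the boundary value $a_2=6$ being exactly what makes $<5,6,9>$ fail $\tau=g(a_1)$ and forces its separate treatment. The proof of Theorem \ref{ele4}(3) already exhibits this mechanism in a slightly smaller setting, so the remaining verifications are routine and would be carried out case by case.
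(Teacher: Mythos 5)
Your proposal follows essentially the same route as the paper: case (1) via Corollary \ref{com e=2}, case (2) via Proposition \ref{maddsym5} (justified by Proposition \ref{grcm} and Theorem \ref{except}), case (4) via Theorem \ref{sym almost}, and a direct Goto-vector computation for $S=\langle 5,6,9\rangle$, with the same congruence bookkeeping (e.g.\ ruling out $v_{-\wh 1}=a_4$ since $a_2+a_4=2a_3\not\equiv 0 \bmod 5$) driving the case splits. The argument is correct and matches the paper's proof in structure and detail.
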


\begin{proof} (1) follows from Corollary \ref{com e=2}; (2) follows from Proposition \ref{maddsym5}, which can be applied due to Proposition \ref{grcm} and Theorem \ref{except}; and (4) follows from Theorem \ref{sym almost}.

(3) $S=<5,6,9> = \{0,5,6,9,10,11,12,14,\rightarrow\}$ where the $\rightarrow$ indicates that all integers greater than 14 are in $S$. Then $T = \{13\}$ and $\A(S)=\{1,2,3,4,5\}$. We have $\sigma(1) = \ord(14) = 2$, $\sigma(2) = \ord(15) = 3$, $\sigma(3) = \ord(16) = 3$, and $\sigma(4) = \ord(17) = 3$. Now the Goto vector of $S$ is $gv=(2,3,3,3,3)$ and we see that for $0\ne u \in S$, if $1\in \A(u)$, then $g(u) = 2$ and otherwise $g(u) = 3$. Thus the result follows.
\end{proof}


\section*{Acknowledgement}

The author would like to thank William Heinzer for his guidance and help during the writing of this paper. He would also like to thank YiHuang Shen for many useful conversations and examples. The computer algebra system Singular was used for all computer calculations.


\begin{thebibliography}{GGP0}



\bibitem[Barucci(2006)]{B} Barucci, V. (2006). Numerical Semigroup Algebras. In: Brewer, J., et al. ed. {\it Multiplicative Ideal Theory in Commutative Algebra}, Springer, pp. 39-53.



\bibitem[Barucci, Dobbs and Fontana(1997)]{BDF} Barucci, V., Dobbs D., Fontana, M. (1997). Maximality Properties in Numerical Semigroups and Applications to One-Dimensional Analytically Irreducible Local Domains. {\it Mem. Amer. Math. Soc.} 125:vii-77.



\bibitem[Barucci and Fr\"oberg(2006)]{BF} Barucci, V., Fr\"oberg, R. (2006). Associated Graded Rings of One-Dimensional Analytically Irreducible Rings. {\it J. Algebra} 304:349-358.

\bibitem[Corso and Polini(1995)]{CP} Corso, A., Polini, C. (1995). Links of Prime Ideals and Their Rees Algebras. {\it J. Algebra} 178:224-238.


\bibitem[Fr\"oberg, Gottlieb and H\"aggkvist(1987)]{FGH} Fr\"oberg, R., Gottlieb, C., H\"aggkvist R. (1987). On Numerical Semigroups. {\it Semigroup Forum} 35:63-83.

\bibitem[Garcia(1982)]{G} Garcia, A. (1982). Cohen-Macaulayness of the Associated Graded Ring of a Semigroup Ring. {\it Comm. Algebra} 10:393-415.

\bibitem[Goto, Kimura and Matsuoka(2007)]{GKM} Goto, S., Kimura, S., Matsuoka, N. (2007). Quasi-socle Ideals in Gorenstein Numerical Semigroup Rings. {\it J. Algebra} 320:276-293.

\bibitem[Goto, Kimura, Matsuoka and Phuong(2007)]{GKMP} Goto, S., Kimura, S., Matsuoka, N., Phuong, T. (2007). Quasi-socle Ideals in Local Rings with Gorenstein Tangent Cones. {\tt arXiv:math.AC/0710.1387v1}.

\bibitem[Goto, Matsuoka and Takahashi(2007)]{GMT} Goto S., Matsuoka, N., Takahashi, R. (2007). Quasi-socle Ideals in a Gorenstein Local Ring. {\it J. Pure Appl. Algebra} 212:969-980.

\bibitem[Graham, Knuth and Patashnik(1995)]{GKP} Graham, R., Knuth, D., Patashnik, O. (1995). {\it Concrete Mathematics: a foundation for computer science} 2nd ed. Addison-Wesley.

\bibitem[Greuel, Pfister and Sch\"onemann(2001)]{GPS} Greuel, G., Pfister, G., Sch\"onemann, H. (2001). {\sc Singular} 3.0 - A Computer Algebra System for Polynomial Computations. In: Kerber, M., Kohlhase, M. {\it Symbolic Computation and Automated Reasoning,
The Calculemus-2000 Symposium.} 227-233.


\bibitem[Heinzer, Kim and Ulrich(2005)]{HKU} Heinzer, W., Kim, M., Ulrich, B. (2005). The Gorenstein and Complete Intersection Properties of Associated Graded Rings. {\it J. Pure Appl. Algebra} 201:264-283.

\bibitem[Heinzer and Swanson(2008)]{HS} Heinzer, W., Swanson, I. (2008). The Goto Numbers of Parameter Ideals. {\it J. Algebra} 321:152-166.

\bibitem[Kunz(1970)]{K} Kunz, E. (1970). The Value-Semigroup of a One-Dimensional Gorenstein Ring. {\it Proc. Amer. Math. Soc.} 25:748-751.

\bibitem[Madero-Craven and Herzinger(2005)]{MCH} Madero, M., Herzinger, K. (2005). Apery Sets of Numerical Semigroups. {\it Comm. Algebra} 33:3831-3838.

\bibitem[Ooishi(1993)]{O} Ooishi, A. (1993). On the Gorenstein Property of the Associated Graded Ring and the Rees Algebra of an Ideal. {\it J. Algebra} 155:397-414.

\bibitem[Polini and Ulrich(1998)]{PU} Polini, C., Ulrich B. (1998). Linkage and Reduction Numbers. {\it Math. Ann.} 310:631-651.

\bibitem[Shen(2008)]{S} Shen, Y. (2008). Tangent cone of numerical semigroup rings with small embedding dimension. {\tt arXiv:0808.2162}.

\bibitem[Valabrega and Valla(1978)]{VV} Valabrega, P., Valla, G. (1978). Form Rings and Regular Sequences. {\it Nagoya Math. J.} 72:93-101.

\bibitem[Wang(2007)]{W} Wang, H. (2007). Links of Symbolic Powers of Prime Ideals. {\it Math. Z.} 256:749-756.

\end{thebibliography}
\end{document}